\newtheorem{thm}{Theorem}
\newtheorem{cor}[thm]{Corollary}
\newtheorem{lem}{Lemma}
\newtheorem{prop}[thm]{Proposition}
\newtheorem{rem}[thm]{Remark}
\numberwithin{thm}{section}
\numberwithin{equation}{section}
\newcommand{\norm}[1]{\left\Vert#1\right\Vert}
\newcommand{\abs}[1]{\left\vert#1\right\vert}
\newcommand{\eps}{\varepsilon}
\newcommand{\A}{\mathcal{A}}
\newcommand{\As}{\text{\sffamily{A}}}
\newcommand{\B}{\mathcal{B}}
\newcommand{\Bs}{\text{\sffamily{B}}}
\newcommand{\Cs}{\text{\sffamily{C}}}
\newcommand{\ds}{\text{\sffamily{d}}}
\newcommand{\E}{\mathbb{E}}
\newcommand{\Es}{\text{\sffamily{E}}}
\newcommand{\F}{\mathcal{F}}
\newcommand{\Hi}{\mathcal{H}}
\newcommand{\K}{\mathcal{K}}
\newcommand{\ku}{\text{\upshape{k}}}
\newcommand{\M}{\mathcal{M}}
\newcommand{\n}{\mathbb{N}}
\newcommand{\N}{\mathcal{N}}
\newcommand{\tphi}{\widetilde{\varphi}}
\newcommand{\U}{\mathcal{U}}
\newcommand{\z}{\mathbb{Z}}
\newcommand{\Om}{\Omega}
\begin{document}
\title[Maurey]
{A Maurey type result for operator spaces}
\author{Marius Junge \and Hun Hee Lee}

\address{Marius Junge : Department of Mathematics 
University of Illinois at Urbana-Champaign 
1409 W. Green Street, 
Urbana, Illinois, USA 61801-2975}
\address{Hun Hee Lee : Department of Pure Mathematics, Faculty of Mathematics, University of Waterloo,
200 University Avenue West, Waterloo, Ontario, Canada N2L 3G1}
\keywords{operator space, operator Hilbert space, completely $p$-summing map}
\thanks{2000 \it{Mathematics Subject Classification}.
\rm{Primary 47L25, Secondary 46B07}}

\begin{abstract}
The little Grothendieck theorem for Banach spaces says that every bounded linear operator between $C(K)$ and $\ell_2$ is 2-summing.
However, it is shown in \cite{J05} that the operator space analogue fails. Not every cb-map $v : \K \rightarrow OH$ is completely 2-summing.
In this paper, we show an operator space analogue of Maurey's theorem : Every cb-map $v : \K \rightarrow OH$ is $(q,cb)$-summing for any $q>2$
and hence admits a factorization $\|v(x)\| \leq  c(q)  \|v\|_{cb} \|axb\|_q$ with $a,b$ in the unit ball of the Schatten class $S_{2q}$.
\end{abstract}

\maketitle

\section{Introduction}

The theory of operator spaces investigates subspace of $C^*$-algebras with their inherited matricial structure.
Many concepts from Banach space theory can be formulated in the setting of so-called ``quantized Banach spaces''.
In particular Grothendieck's fundamental work on tensor norms leads to many interesting new problems in the context of operator algebras
and operator spaces. Let us mention in particular Shlyakhtenko and Pisier's version of Grothendieck's theorem for operator spaces,
Haagerup and Musat's the very recent completion of Grothendieck's theorem for $C^*$-algebras and the results in \cite{J05, P04, PS02}.
A fundamental object in the theory of operator spaces is Pisier's operator space $OH$,
the only operator space completely isometric to its anti-dual. Using their version of Grothendieck's theorem for operator spaces,
Pisier-Shlyakhtenko obtained a characterization of completely bounded maps $$u : A \rightarrow OH$$ for every $C^*$-algebra $A$:
Indeed $u$ is completely bounded if and only if there exists a state $\phi$ and a constant $C>0$ such that
$$ \norm{u(x)} \leq C [\phi(x^*x)\phi(xx^*)]^{\frac{1}{4}}.$$
This characterization should be considered as analogue of the little Grothendieck's theorem in the theory of Banach spaces.
It is shown in \cite{J05} that a straight forward translation
 \begin{equation}\label{lg} \pi_2^{o}(u)\leq C \norm{u}_{cb}?
 \end{equation}
does not hold in general, and not even uniformly for finite dimensional $C^*$-algebra's $A$.
We will define the completely $2$-summing norm $\pi_2^o$ below.

In this paper we will approach from a different angle.  Let us first recall the classical Banach space theory.
Let $X$ be a Banach space with cotype $q$, i.e.
 \[ \Big( \sum_k \|Tx_k\|_X^q \Big)^{\frac{1}{q}}\leq c_q(X) \E \|\sum_k \eps_k x_k\|_X \]
holds for all finite families $x_1,...,x_n\in X$, where $(\eps_k)_{k \geq 1}$ is the classical Rademacher sequence
and $\E$ is the corresponding expectation. Maurey (\cite{Mau}) showed  that for a Banach space with cotype $q$ we have
 \begin{equation}\label{Ma}
 L(C(K),X) = \Pi_p(C(K),X)
 \end{equation}
holds for every $p>q$ and every space of continuous functions $C(K)$. Even for $X=L_q([0,1])$ and $q>2$ the result is not true for $p=q$.
Let us recall that a map $T:X\rightarrow Y$ is $p$-summing if
 \[ \Big( \sum_{k=1}^n  \|Tv(e_k)\|_Y^p\Big)^{\frac{1}{p}}
 \leq C \|v:\ell_{p'}^n\rightarrow X\|.\]
Then $\pi_p(T)=\inf C$, where the infimum is taken over all constants satisfying the inequality above for arbitrary $u$.
In the setting of operator spaces we can easily adapt this notation and say that
$T:E\rightarrow F$ is $(p,cb)$-summing if
 \[ \Big( \sum_{k=1}^n  \|Tv(e_k)\|_F^p\Big)^{\frac{1}{p}}
 \leq C \|v:\ell_{p'}^n\rightarrow E\|_{cb}.\]
holds for some constant $C$. As above we define $\pi_{p,cb}(T)=\inf C$ and $\Pi_{p,cb}(E,F)$ as the space of $(p,cb)$-summing maps.
Very little is known about the right concept of cotype $q$, although some attempts have been made in the literature (see \cite{Par03, JP05} and
\cite{L-typecotype, L-weak}). Clearly, we should expect that $OH$ has cotype $2$.
In this sense our main result is an operator space version of Maurey's theorem:

\begin{thm}\label{thm-main} Let $2<q<\infty$. Then
 \[ CB(B(H),OH)\subseteq \Pi_{q,cb}(B(H),OH).\]
\end{thm}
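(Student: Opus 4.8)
The plan is to combine the operator space little Grothendieck theorem of Pisier--Shlyakhtenko with a noncommutative extrapolation in the spirit of Maurey, exploiting the surplus exponent $q-2>0$ to build the Schatten factorization.

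First I would reduce to finite dimensions. Since the $(q,cb)$-summing inequality and the relevant cb-norm involve only finite families $x_1,\dots,x_n$, it suffices to bound $\pi_{q,cb}(v)$ uniformly over $v:M_N\to OH$, the general $B(H)$ case following by a routine compression-and-limiting argument. On $M_N$ every state has the form $\phi(x)=\mathrm{tr}(\rho x)$; putting $d=\rho^{1/2}$, so that $\|d\|_2=1$, the Pisier--Shlyakhtenko characterization quoted above supplies, for the given completely bounded $v$, a single estimate
\[ \|v(x)\|_{OH}\le C\,\|v\|_{cb}\,\|xd\|_2^{1/2}\,\|dx\|_2^{1/2}. \]
This is the input that plays the role of ``cotype $2$'': the two factors $\|xd\|_2$ and $\|dx\|_2$ are the column- and row-type quantities attached to $x$, and their appearance with equal weights reflects the interpolation identity $OH=[R,C]_{1/2}$.

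The heart of the proof is to upgrade this estimate to the summing inequality, that is, to dominate $\big(\sum_k\|v(x_k)\|_{OH}^q\big)^{1/q}$ by $\|(x_k):\ell_{q'}^n\to M_N\|_{cb}$. Substituting the estimate above, the problem collapses to a purely noncommutative $L_p$ inequality bounding $\sum_k\|x_kd\|_2^{q/2}\|dx_k\|_2^{q/2}$ in terms of the completely bounded weak-$q$ norm of the family. Everything here hinges on $q>2$: the step is a genuine extrapolation and must break down at the endpoint, in agreement with the negative result of \cite{J05} for the $2$-summing norm, so no pointwise comparison is available and the summation has to be used in full. The mechanism I would employ is a noncommutative change of density of Maurey--Nikishin type, performed simultaneously on the left and on the right and organized by splitting each density as $d=d^{1/q}\,d^{1-1/q}$: the exponents $1/q$ are dictated by the normalization $\mathrm{tr}(d^2)=1$, which forces $\|d^{1/q}\|_{2q}=1$ and thereby pins the factorizing pair $a,b$ to the unit ball of $S_{2q}$, while the surplus exponent $1-1/q>0$ furnishes precisely the room needed to absorb the leftover density against the weak-$q$ norm. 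Carried out by a Pietsch-type minimax, this produces, for the whole map, $a,b$ in the unit ball of $S_{2q}$ with $\|v(x)\|\le c(q)\,\|v\|_{cb}\,\|axb\|_q$; the passage back to the summing inequality then rests on the operator space analogue of the elementary fact that the inclusion $C(K)\to L_q(\mu)$ is $q$-summing, here the assertion that the weighted inclusion $x\mapsto axb:B(H)\to S_q$ is itself $(q,cb)$-summing with constant at most $\|a\|_{2q}\|b\|_{2q}$.

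The main obstacle I anticipate is the genuinely two-sided, non-lattice nature of this extrapolation. In Maurey's commutative argument one produces a single Pietsch density by a maximal-function/Nikishin change of density that relies on the order structure of $C(K)$; here the multiplication $x\mapsto axb$ is two-sided and the target $OH$ lies strictly between the row and column structures, so the change of density must be run on both sides at once and the resulting cross terms controlled by noncommutative H\"older together with the row/column decomposition of $OH$. Making this rigorous will require a minimax (a Hahn--Banach separation over pairs of densities) fused with complex interpolation of the noncommutative $L_p$ scale, and the delicate point will be to choose the left and right densities compatibly so that a single pair $a,b$ lands in $S_{2q}$; that compatibility, rather than any one estimate, is where I expect the real work to lie.
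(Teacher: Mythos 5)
Your proposal does not follow the paper's route, and the step on which everything hinges is not actually carried out. The paper never attempts a direct Maurey--Nikishin extrapolation from the Pisier--Shlyakhtenko estimate. Instead it dualizes (Theorem \ref{thm-dualprob}): the claim is shown to be equivalent to $\pi^o_1(T_x:OH\to\ell_p)\lesssim \|x\|_{\ell_p(OH)}$ for $p=q'$, and this is proved by embedding $OH$ and $S_p$ into noncommutative $L_1$-spaces of QWEP von Neumann algebras via generalized circular elements and Rosenthal-type inequalities, identifying $\pi^o_1$ concretely as a norm in an eight-term sum of weighted $L_2$-spaces in four variables, computing that norm for the identity by a region-by-region integration (Theorem \ref{thm-calculation-identity}), and then passing from the identity to general maps by an Orlicz-space argument. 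Your final step --- that $x\mapsto axb:B(H)\to S_q$ is $(q,cb)$-summing with constant $\|a\|_{2q}\|b\|_{2q}$, so the factorization implies the summing inequality --- is indeed standard (\cite{J-Hab, P98}), and the paper presents the factorization as a \emph{consequence} of the theorem, not as a route to it. So the entire content of the theorem is concentrated in the step you defer to ``a minimax fused with complex interpolation,'' and that step is precisely what is missing.

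Concretely, the gap is this: the Pisier--Shlyakhtenko inequality $\|v(x)\|\le C\|v\|_{cb}\,\|xd\|_2^{1/2}\|dx\|_2^{1/2}$ produces a \emph{single} density $d$, and the quantity $\|xd\|_2^{1/2}\|dx\|_2^{1/2}$ is a geometric mean of a column and a row expression, not a two-sided norm $\|axb\|_q$; the two are not comparable for any choice of $a,b$ built from $d$ alone (e.g.\ for $d$ a rank-one projection and $x$ a permutation matrix moving its support, $\|d^{1/q}xd^{1/q}\|_q=0$ while $\|xd\|_2^{1/2}\|dx\|_2^{1/2}=1$). Producing a genuinely new pair $(a,b)$ therefore requires an averaging over families $(x_k)$, i.e.\ a Pietsch/Nikishin-type separation argument, and in the commutative case that argument leans on the lattice structure of $C(K)$ and on Rademacher averaging against cotype --- neither of which is available here, as you yourself note. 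You offer no substitute mechanism, only the intention to find one, and the sheer weight of the machinery the authors deploy instead (free probabilistic embeddings, the eight-term quotient space, the Orlicz extrapolation) is strong circumstantial evidence that no such direct mechanism is known. A correct argument must also produce the degenerating constant $c(q)\sim(q/(q-2))^{1/2}$, since the statement fails at $q=2$ by \cite{J05}; in the paper this blow-up emerges from the integrals over the regions $A_{2,1},A_{2,3},A_{4,1},A_{4,2},A_{4,3}$ in Theorem \ref{thm-calculation-identity}, whereas in your sketch it has no identifiable source. As it stands the proposal is a plausible research plan, not a proof.
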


The factorization theory for $(q,cb)$-maps is very satisfactory (see \cite{J-Hab, P98}).
In the finite dimensional setting, the result reads as follows: Let $$u:M_m\rightarrow OH$$ be a completely bounded map.
Then there are positive elements $a, b$ with $$\norm{a}_{S_{2q}} \,,\, \norm{b}_{S_{2q}} \leq 1$$ such that 
\[ \|u(x)\| \leq  c(q)  \|u\|_{cb} \|axb\|_q.\]
Note that the statement fails for $q=2$ and indeed we have $c(q)\leq c_0 (\frac{q}{q-2})^{\frac{1}{2}}$ for some constant $c_0$.

The definition of $(p,cb)$-summing maps lies in between Banach space and operator space theory.
In operator space theory a map is called completely $p$-summing if
 \[ \norm{[ Tu(e_{ij})]}_{S_p^n(F)} \leq C \|u:S_{p'}^n\rightarrow E\|_{cb}.\]
Then $\pi_p^o(T)=\inf C$, and $\Pi_p^o(E,F)$ is the space of completely $p$ summing maps between operator spaces $E$ and $F$.
Let us recall that for a matrix $x=[x_{ij}]$ with values in $E$ the norm in $S_p^n(E)$ is defined as
 \[ \|x \|_{S_p^n(E)} = \inf_{x_{ij}=\sum_{kl} a_{ik}y_{kl}b_{kj}} \|a\|_{2p}  \|[y_{ij}]\|_{M_n(E)} \|b\|_{2p},\]
where $a=[a_{ij}]$ and $b=[b_{ij}]$.
Note that every operator space carries a natural family of matrix norms $M_n(E)$. We refer to \cite{P98} for more details and
properties of the vector-valued noncommutative $L_p$ spaces. It is well-known that completely $p$-summing maps are completely bounded.
Therefore it is tempting to formulate the following strengthening of our result.

\vspace{0.5cm} \noindent {\bf Problem:} Let $2<q<\infty$. It is true that
 \begin{equation}\label{conj}  CB(B(H),OH) = \Pi_q^o(B(H),OH)?
 \end{equation}

\noindent Our approach to Theorem \ref{thm-main} uses duality. We first show that the conclusion is equivalent to 
\begin{equation}\label{dual-formulation}
\ell_{p}(\ell_2)  = \Pi_1^o(OH, \ell_p).
\end{equation}
Let us note that also \eqref{conj} is equivalent to \[S_{p}(OH) = \Pi_{1}^o(OH, S_p). \]
Here $\frac{1}{p}+\frac{1}{q}=1$ is the conjugate index. Following the general theory of completely $1$-summing maps
we can realize the space $\Pi_1^o(OH, \ell_p)$ as a subspace of a noncommutative $L_1$ space.
Here we invoke the results and methods from the recent paper \cite{JP-LpLq} which shows that $\ell_{p}$ is completely isomorphic to
subspace of a noncommutative $L_1$ space with respect to a von Neumann algebra with $QWEP$.
Recall that a $C^*$-algebra $A$ has $WEP$ (Lance's weak expectation property) if the inclusion map $i_A : A \hookrightarrow A^{**}$ 
factors completely positively and completely contractively through $B(H)$ for some Hilbert space $H$.
A $C^*$-algebra $B$ has $QWEP$ if there is a $WEP$ $C^*$-algebra $A$ and two sided ideal $I\subseteq A$ such that $B \cong A/I$.

Based on recent results of Xu on embedding results using tools from real interpolation theory
(see \cite{X-Rp-Embedding}) and Pisier's concrete embedding of $OH$ using generalized free gaussian variables (see \cite{PS02} and \cite{P04}),
we can identify a rather concrete embedding of $\Pi_{1}^o(S_p,OH)$. We are then able to show that at least for the identity
$id:\ell_p^n\rightarrow \ell_2^n=OH_n$ we have
 \begin{equation}\label{funds}
\pi_1^o(id:\ell_2^n\rightarrow\ell_{p}^n ) \sim \Big(\frac{q}{q-2}\Big)^{\frac{1}{2}} n^{\frac{1}{p}}.
 \end{equation}
Unfortunately, calculating this norms turns out to be rather delicate and requires a detailed case by case analysis in a 8-term quotient space.
Using further properties of our concrete realization, we can then find an intermediate vector-valued Orlicz norm
estimating the completely $1$-summing from above and the norm in $\ell_{p'}(\ell_2)$ from below.
Testing the Orlicz norm on the sum of the unit vectors we obtain the full result from \ref{funds}.
Using the theory of tensor norms in operator space, we can formulate the following application.

\begin{cor} Let $1< p<2$. Then
 \[ \Pi_p^o(OH,\ell_{p}) = \Pi_1^o(OH,\ell_{p}) \]
with equivalent norms.
\end{cor}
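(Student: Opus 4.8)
The plan is to prove the two inclusions separately; only one of them carries content. As in the classical theory, the completely $p$-summing norms are monotone in $p$, so $\pi_p^o(T)\le\pi_1^o(T)$ for every $T$, and this already gives the inclusion $\Pi_1^o(OH,\ell_p)\subseteq\Pi_p^o(OH,\ell_p)$. The substance of the corollary is therefore the reverse inclusion together with the estimate $\pi_1^o(T)\le c(p)\,\pi_p^o(T)$; once this is in hand the two norms are equivalent and the spaces coincide. Throughout I write $q=p'$, so that $2<q<\infty$ and Theorem \ref{thm-main} and its dual form \eqref{dual-formulation} are available.

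For the hard inclusion I would exploit the concrete identification $\Pi_1^o(OH,\ell_p)=\ell_p(\ell_2)$ furnished by \eqref{dual-formulation}. It then suffices to show that every completely $p$-summing $T:OH\to\ell_p$ already represents an element of $\ell_p(\ell_2)$ with $\|T\|_{\ell_p(\ell_2)}\le c(p)\,\pi_p^o(T)$. By a routine finite-dimensional approximation I reduce to maps $T:OH_n\to\ell_p^n$ and work directly from the defining inequality
\[
\big\|[Tv(e_{ij})]\big\|_{S_p^n(\ell_p)}\le \pi_p^o(T)\,\|v:S_{p'}^n\to OH\|_{cb}.
\]
The device that upgrades a $p$-summing bound to a $1$-summing bound is the vector-valued Hölder factorization
\[
S_1^n(\ell_p)=S_{2p'}^n\cdot S_p^n(\ell_p)\cdot S_{2p'}^n,
\]
valid because $\tfrac1{p'}+\tfrac1p=1$. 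The crucial coincidence is that the conjugate exponent satisfies $2q=2p'$: the positive multipliers $a,b$ in the unit ball of $S_{2q}=S_{2p'}$ produced by the factorization of cb-maps into $OH$ (the finite-dimensional statement displayed after Theorem \ref{thm-main}) are \emph{exactly} the densities that convert an $S_p^n(\ell_p)$-bound into an $S_1^n(\ell_p)$-bound. Thus, given a test map $v:S_\infty^n\to OH$ with $\|v\|_{cb}\le1$ (the test maps relevant for $\pi_1^o$), the homogeneity and self-duality of $OH$ should allow me to absorb $a,b$ and replace $v$ by a map $\tilde v:S_{p'}^n\to OH$ of controlled cb-norm, in such a way that $[Tv(e_{ij})]=a\,[T\tilde v(e_{ij})]\,b$. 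Applying the $\pi_p^o$-inequality to $\tilde v$ and then the Hölder factorization yields $\|[Tv(e_{ij})]\|_{S_1^n(\ell_p)}\le c(p)\,\pi_p^o(T)$, i.e. $\pi_1^o(T)\le c(p)\,\pi_p^o(T)$.

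The main obstacle is precisely this absorption step: one must match the Schatten multipliers coming from the cb-structure of $OH$ with those appearing in the Hölder factorization of $S_1^n(\ell_p)$, and control the constant as $p\to2$, where one expects the blow-up of order $(q/(q-2))^{1/2}$ already visible in \eqref{funds}. Indeed \eqref{funds} computes $\pi_1^o(\mathrm{id}:\ell_2^n\to\ell_p^n)\sim (q/(q-2))^{1/2}n^{1/p}$, while the $S_p^n(\ell_p)$-estimate should give $\pi_p^o(\mathrm{id}:\ell_2^n\to\ell_p^n)\sim n^{1/p}$; the ratio $(q/(q-2))^{1/2}$ is exactly the constant $c(p)$ we must recover in general, which is consistent with the diagonal maps being extremal. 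Everything else — the monotonicity inclusion, the reduction to finite dimensions, and the appeal to \eqref{dual-formulation} — is routine given the results already established.
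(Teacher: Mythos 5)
Your easy inclusion (monotonicity of $\pi_p^o$ in $p$) and your opening reduction are fine: the content is indeed the estimate $\pi_1^o(T)\le c(p)\,\pi_p^o(T)$, and via \eqref{dual-formulation} this amounts to showing $\|x\|_{\ell_p(\ell_2)}\lesssim \pi_p^o(T_x)$. But at exactly this point the paper has nothing left to prove: the equivalence $(2)\Leftrightarrow(3)$ of Theorem \ref{thm-dualprob} is established by embedding $OH_n$ completely isomorphically into $L_p(\M)$ for a $QWEP$ von Neumann algebra and invoking Corollary 10 of \cite{Y-p-summing}, which yields $\pi_p^o(T_x)\sim\|x\|_{\ell_p(OH)}$ outright. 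The corollary is then an immediate combination of this identity with the main theorem (condition $(2)$), and no further argument is needed. You instead set out to prove $\pi_1^o(T)\le c(p)\,\pi_p^o(T)$ directly by a H\"older absorption, which is a genuinely different -- and, as written, incomplete -- route.

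The gap in your absorption step is real, and it is not merely technical. The factorization recorded after Theorem \ref{thm-main} gives, for a cb test map $v:M_n\to OH$, only the pointwise estimate $\|v(x)\|\le c(q)\|v\|_{cb}\|axb\|_q$; that is, the induced map $\tilde v$ on $\overline{aM_nb}\subseteq S_{p'}^n$ is \emph{norm} bounded by $c(q)\|v\|_{cb}$. The defining inequality for $\pi_p^o$, however, must be tested against $\|\tilde v:S_{p'}^n\to OH\|_{cb}$, and a bounded map into $OH$ defined on (a subspace of) $S_{p'}^n$ need not be completely bounded with comparable norm -- $OH$ is homogeneous only for maps from $OH$ to itself. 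So after the H\"older step $\|[Tv(e_{ij})]\|_{S_1^n(\ell_p)}\le\|a\|_{2q}\,\|[T\tilde v(e_{kl})]\|_{S_p^n(\ell_p)}\,\|b\|_{2q}$ (whose exponent arithmetic is correct), you have no licence to apply the $\pi_p^o$-inequality to $\tilde v$. You flag this yourself as ``the main obstacle,'' so what you have is a plan with its central step unresolved rather than a proof. To repair it you would either need a completely bounded factorization theorem for $(q,cb)$-summing maps into $OH$ (not what is stated in the paper), or you should simply abandon the absorption and quote the $(2)\Leftrightarrow(3)$ equivalence of Theorem \ref{thm-dualprob}, whose proof supplies the missing estimate $\pi_p^o(T_x)\sim\|x\|_{\ell_p(OH)}$ by entirely different means.
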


The paper is organized as follows. We collect some preliminaries in section \ref{sec-prelim}.
In section \ref{sec-dualProb} we present the dual formulation \eqref{dual-formulation}. This requires us several embedding results into 
a noncommutative $L_1$ space, which will be given in the following section. In section \ref{sec-embeddings} we combine
the ideas of Junge, Xu, Pisier and Junge $\&$ Parcet of embedding $OH$ and $S_p$ ($1<p<2$) into noncommutative $L_1$ spaces.
In section \ref{sec-ChangeOfDensity} we use the information from the previous section to find a concrete embedding of $\Pi_{1}^o(S_p,OH)$.
In section \ref{sec-identity} we do the calculation for the identity, which is crucial to our conclusion.
In the last section we apply the ``Orlicz space argument" by Junge and Xu to explain that the result for the identity is enough to show our main result.

\section{Preliminaries and Notations}\label{sec-prelim}
We assume that the reader is familiar with standard concepts in operator algebra (\cite{Ta79, Ta03}) and operator space theory (\cite{ER00, P03}).

For two operator spaces $E_0$ and $E_1$ we denote their $\ell_p$-direct sum by $E_0 \oplus_p E_1$ for $1\leq p \leq \infty$ (\cite{P03}). 
If $(E_0, E_1)$ is a pair of operator spaces which is a compatible pair in the Banach space sense,
then $E_0 +_p E_1$ refers to the quotient operator space of $E_1 \oplus_p E_2$ by the subspace $\{(x_0, x_1) : x_0 +x_1 =0\}$.
Similarly $E_0 \cap_p E_1$ refers to the diagonal subspace of $E_0 \oplus_p E_1$. Note that $E_0 \oplus_p E_1$'s are all completely isomorphic for 
$1\leq p \leq \infty$ with a universal constant and so are $E_0 +_p E_1$'s and $E_0 \cap_p E_1$'s.
When $p=1$ we simply write $E_0 +_1 E_1$ as $E_0 + E_1$.
We will prefer $E_0 +_2 E_1$ and $E_0 \cap_2 E_1$ in section \ref{sec-embeddings} to be more precise in constant,
while we prefer $E_0 + E_1$ in the following sections since we have 
$$(E_0 + E_1)\widehat{\otimes}\,(F_0 + F_1) \cong (E_0 \widehat{\otimes}\, F_0) + (E_1 \widehat{\otimes}\, F_1)$$
completely isometrically, where $\widehat{\otimes}$ is the projective tensor product of operator spaces.

For a Hilbert space $H$ we denote the column, the row and the operator Hilbert space on $H$ by $H^c$, $H^r$ and $H^{oh}$, respectively.
For $1\leq p \leq \infty$ and $n\in \n$ we denote $R^n_p = [R_n, C_n]_{\frac{1}{p}}$, where $[\cdot, \cdot]_{\frac{1}{p}}$
implies complex interpolation in the operator space sense (\cite{P96}).

We will frequently use noncommutative $L_1$ spaces in this paper.
For a $\sigma$-finite von Neumann algebra $\A$ with a distinguished normal faithful state $\phi$ with density $D$
the noncommutative $L_1$-space in the sense of Haagerup is denoted by $L_1(\A)\, (= L_1(\A ,\phi))$.
There is a natural operator space structure on $L_1(\A)$ as the predual of $\A$. 

Vector valued $L_1$-spaces can be defined for $R^n_1$, $C^n_1$ and $OH_n$ as follows.
	\begin{align*}
	L_1(\A ; R^n_1) & := \Big\{\sum^n_{i=1}x_i \otimes e_{1i}\Big\} \subseteq L_1(\A \overline{\otimes}M_n),\\
	L_1(\A ; C^n_1) & := \Big\{\sum^n_{i=1}x_i \otimes e_{i1}\Big\} \subseteq L_1(\A \overline{\otimes}M_n)
	\end{align*}
and
$$L_1(\A; OH_n) := \Big[L_1(\A ; R^n_1),\,  L_1(\A ; C^n_1)\Big]_{\frac{1}{2}}.$$

Let $\As$ be a sub-von Neumann algebra of $\A$ and $\Es : \A \rightarrow \As$ a normal faithful conditional expectation satisfying
$$\phi = \phi|_\As \circ \Es.$$ Then, the space $L^r_1(\A, \Es)$ and $L^c_1(\A, \Es)$ (\cite{J02}) are defined by
the completions of $D\A$ and $\A D$ under the norms
$$\norm{Dx}_{L^r_1(\A, \Es)} = \norm{(D\Es(xx^*)D)^{\frac{1}{2}}}_{L_1(\As)}\,\, \text{and}\,\,
\norm{xD}_{L^c_1(\A, \Es)} = \norm{(D\Es(x^*x)D)^{\frac{1}{2}}}_{L_1(\As)},$$ respectively.

Since $L_2(\A)$ is a Hilbert space we can consider $L^{r_1}_2(\A)$ and $L^{c_1}_2(\A)$ endowed with operator space structures
in the sense of $R_1 = C$ and $C_1 = R$, then their operator space structure can be described as follows.
Let $\text{tr}_\A$ the unique tracial functional on $L_1(\A)$ satisfying $$\phi(a) = \text{tr}_\A (aD)$$ for all $a\in \A$. Then we have
\begin{align*}
\norm{(I_{S^m_1}\otimes D^{\frac{1}{2}})a}_{S^m_1(L^{r_1}_2(\A))} & =
\norm{(I_{S^m_1} \otimes \text{tr}_{\A})((I_{S^m_1}\otimes D^{\frac{1}{2}})aa^*(D^{\frac{1}{2}} \otimes I_{S^m_1}))^{\frac{1}{2}}}_{S^m_1}\\
& = \norm{(I_{M_m}\otimes \phi)(aa^*)^{\frac{1}{2}}}_{S^m_1}
\end{align*}
and 
\begin{align*}
\norm{b(D^{\frac{1}{2}}\otimes I_{S^m_1})}_{S^m_1(L^{c_1}_2(\A))}
& = \norm{(I_{S^m_1} \otimes \text{tr}_{\A})((I_{S^m_1} \otimes D^{\frac{1}{2}})b^*b(I_{S^m_1} \otimes D^{\frac{1}{2}}))^{\frac{1}{2}}}_{S^m_1}\\
& = \norm{(I_{M_m}\otimes \phi)(b^*b)^{\frac{1}{2}}}_{S^m_1}
\end{align*}
for $a,b \in S^m_1 \otimes \A$ and $m\in \n$.

We use the symbol $a\lesssim b$ if there is a $C>0$ such that $a \leq C b$ and $a\sim b$ if $a\lesssim b$ and
$b\lesssim a$.

\section{The dual problem}\label{sec-dualProb}

We present a dual formulation of the original problem, which enables us to do concrete calculations.
For a linear map $v: E\rightarrow F$ between operator spaces we consider
$\Gamma_{\infty}$-norm and $\gamma_{\infty}$-norm of $v$ defined by
$$\Gamma_{\infty}(v) = \inf\norm{\alpha}_{cb}\norm{\beta}_{cb},$$
where the infimum is taken over all Hilbert space $H$ and the factorization
$$i_F v : E \stackrel{\alpha}{\rightarrow} B(H) \stackrel{\beta}{\rightarrow} F^{**},\,\,
\text{where $i_F$ is the inclusion $F \hookrightarrow F^{**}$}$$  and
$$\gamma_{\infty}(v) = \inf\norm{\alpha}_{cb}\norm{\beta}_{cb},$$
where the infimum is taken over all $m\in \mathbb{N}$ and the factorization
$$v : E \stackrel{\alpha}{\rightarrow} M_m \stackrel{\beta}{\rightarrow} F.$$
See section 4 of \cite{J05} or \cite{EJR00} for the details.

\begin{thm}\label{thm-dualprob}
Let $1< p < 2$ and $\frac{1}{p} + \frac{1}{p'} = 1$. Then, the following conditions are equivalent.
\begin{itemize}
\item[(1)] For any Hilbert space $H$ we have $$CB(B(H), OH) \subseteq \Pi_{p', cb}(B(H), OH).$$
\item[(2)] There is a constant $C>0$ such that
$$\pi^o_1(T_x : OH \rightarrow \ell_p) \leq C \norm{x}_{\ell_p(OH)}$$ for all $x \in \ell_p(OH)$ and
$T_x : OH \rightarrow \ell_{p}$, the linear map naturally associated to $x$.
\item[(3)] $\Pi^o_p(OH, \ell_p) \subseteq \Pi^o_1(OH, \ell_p)$.
\end{itemize}
\end{thm}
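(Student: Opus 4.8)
The plan is to prove the chain (1) $\Leftrightarrow$ (2) $\Leftrightarrow$ (3), treating (2) $\Leftrightarrow$ (3) as an identification of spaces and (1) $\Leftrightarrow$ (2) as the genuine duality step. Throughout I would first reduce to the finite-dimensional situation $B(H) = M_m$, $OH = OH_n$, $\ell_p = \ell_p^n$ by a standard weak-$*$ density and localization argument, so that all the summing norms become suprema over finitely many finite matrices and every dual pairing is between finite-dimensional operator spaces.

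For (2) $\Leftrightarrow$ (3), I would establish the operator space identity $\Pi_p^o(OH,\ell_p) \cong \ell_p(OH)$ realized by $x \mapsto T_x$, that is, $\pi_p^o(T_x) \sim \norm{x}_{\ell_p(OH)}$ with universal constants. Since $OH$ is a Hilbert space, every $T : OH \to \ell_p$ is of the form $T_x$ for a unique sequence $x = (x_k)$ in $OH$, so the only content is the norm equivalence, which follows from the general theory of completely $p$-summing maps with values in a commutative $L_p$-space (\cite{P98}) together with the self-duality $OH^* \cong OH$. Granting this, (3) asserts $\pi_1^o(T) \le C \, \pi_p^o(T)$ for every $T \in \Pi_p^o(OH,\ell_p)$; since every such $T$ is a $T_x$ with $\pi_p^o(T_x) \sim \norm{x}_{\ell_p(OH)}$, this is exactly (2) (both implications using the two-sided comparison and the surjectivity of $x \mapsto T_x$).

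For (1) $\Leftrightarrow$ (2), which is the heart, I would argue by trace duality. I would set up the bilinear trace pairing between $CB(M_m, OH_n)$ and the maps $T_x : OH_n \to \ell_p^n$, using $M_m^* \cong S_1^m$, the self-duality $OH_n^* \cong OH_n$, and $(\ell_p^n)^* \cong \ell_{p'}^n$. The assertion (1), namely $\pi_{p',cb}(v) \le C \norm{v}_{cb}$ for every $v$, is an inclusion of norms on $CB(M_m, OH_n)$, which by the bipolar theorem is equivalent to the reverse inclusion of the polar unit balls in the dual. The key computation is to identify the polar of the $CB$-unit ball and of the $(p',cb)$-summing unit ball under this pairing: testing the $(p',cb)$-summing norm against maps $w : \ell_p^N \to M_m$ is dual, via the factorization ($\gamma_\infty$, $\Gamma_\infty$) machinery of \cite{J05, EJR00}, to testing the completely $1$-summing norm of $T_x$ against maps $M_N \to OH_n$ (here $p'=\infty$ forces the test space $S_{p'}^N = M_N$). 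Matching these two descriptions turns (1) into the estimate $\pi_1^o(T_x) \le C' \norm{x}_{\ell_p(OH)}$, which is (2).

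The main obstacle is precisely this duality step (1) $\Leftrightarrow$ (2): one must carry out the trace-duality computation entirely in the operator space category, tracking all matrix amplifications rather than only the Banach norms, and correctly match the ``mixed'' norm $\pi_{p',cb}$ (whose test objects are the naturally normed $\ell_p^N$) against the fully operator-space norm $\pi_1^o$ (whose test objects are the $M_N$). Getting the operator space structures on $\ell_p^n$, on the vector-valued space $\ell_p(OH)$, and on the trace-class predual exactly right — and verifying that the polars of the two unit balls genuinely coincide under the pairing, rather than merely being comparable in the Banach sense — is the delicate core. The projective-tensor identity $(E_0 + E_1)\widehat{\otimes}(F_0+F_1) \cong (E_0\widehat{\otimes} F_0)+(E_1\widehat{\otimes}F_1)$ recorded in the preliminaries is exactly the tool I expect to need in order to split these mixed norms and complete the matching.
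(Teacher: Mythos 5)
Your overall skeleton (finite-dimensional reduction, trace duality for (1)$\Leftrightarrow$(2), and the identification $\pi^o_p(T_x)\sim\norm{x}_{\ell_p(OH)}$ as the pivot for (2)$\Leftrightarrow$(3)) matches the paper, but the step you yourself flag as ``the delicate core'' --- identifying the polar of the $(p',cb)$-summing unit ball under the trace pairing --- is left unexecuted, and it is exactly where your plan diverges from what is actually needed. The paper never computes the trace dual of $\pi_{p',cb}$ at all. Instead it dualizes condition (2): since in finite dimensions $\Gamma_\infty=\gamma_\infty$ and $\gamma_\infty$ is the trace dual of $\pi^o_1$ (see \cite{EJR00, J05}), while $\ell^n_p(OH_n)^*=\ell^n_{p'}(OH_n)$, condition (2) is equivalent to the statement $\norm{y}_{\ell^n_{p'}(OH_n)}\leq C\,\Gamma_\infty(T^y:\ell^n_p\to OH_n)$ for all $y$. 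The equivalence of this with (1) then requires no further duality computation: for one direction take a near-optimal factorization $T^y=\beta\alpha$ through $B(H)$ and use $\norm{y}_{\ell^n_{p'}(OH_n)}\leq\pi_{p',cb}(T^y)\leq\pi_{p',cb}(\beta)\norm{\alpha}_{cb}\leq C\norm{\beta}_{cb}\norm{\alpha}_{cb}$, the last step being (1); for the converse apply the dualized statement to $y$ with $T^y=uv$ for arbitrary $v:\ell^m_p\to B(H)$ and use $\Gamma_\infty(uv)\leq\norm{u}_{cb}\norm{v}_{cb}$. Your plan of matching the polars of the two summing-norm balls head-on would force you to identify the predual of the mixed norm $\pi_{p',cb}$, which is both harder and unnecessary; and the identity $(E_0+E_1)\widehat{\otimes}\,(F_0+F_1)\cong(E_0\widehat{\otimes}\,F_0)+(E_1\widehat{\otimes}\,F_1)$ that you expect to need plays no role in this theorem (it enters only in the later embedding sections).

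A second, smaller gap: for (2)$\Leftrightarrow$(3) you assert $\pi^o_p(T_x)\sim\norm{x}_{\ell_p(OH)}$ ``from the general theory of completely $p$-summing maps with values in a commutative $L_p$-space together with the self-duality of $OH$.'' This equivalence is indeed the right pivot, but it does not follow from soft general theory: the paper obtains it by embedding $OH_n$ completely isomorphically into $L_p(\M)$ for a von Neumann algebra $\M$ with $QWEP$ and invoking Corollary 10 of \cite{Y-p-summing}, which gives $\pi^o_p(T_x)\sim\pi^o_p(T_x\circ i^*)=\norm{I_{\ell^n_p}\otimes i(x)}_{\ell^n_p(L_p(\M))}\sim\norm{x}_{\ell^n_p(OH_n)}$. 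Without that embedding (or an equivalent substitute) the identification of $\Pi^o_p(OH,\ell_p)$ with $\ell_p(OH)$ is unjustified.
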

\begin{proof}
(1) $\Rightarrow$ (2)

By a standard density argument it is enough to consider $n$-dimensional case, $n\in \n$, $\ell^n_p(OH_n)$
instead of $\ell_p(OH)$. Then, since $\Gamma_{\infty} = \gamma_{\infty}$ for linear maps
between finite dimensional spaces (see \cite{EJR00}) and $\gamma_{\infty}$ is the trace dual of $\pi^o_1$,
(2) is equivalent to
	\begin{eqnarray}\label{equi-(2)}
	\norm{y}_{\ell^n_{p'}(OH_n)} \leq C \cdot \Gamma_{\infty}(T^y : \ell^n_p \rightarrow OH_n)
	\end{eqnarray}
for all $y \in \ell^n_{p'}(OH_n)$ and
$T^y : \ell^n_p \rightarrow OH_n$, the linear map naturally associated to $y$.
Now for any $\epsilon > 0$ we have a factorization
$T^y : \ell^n_p \stackrel{\alpha}{\rightarrow} B(H) \stackrel{\beta}{\rightarrow} OH_n$ with
	$$\norm{\alpha}_{cb}\norm{\beta}_{cb} \leq (1+\epsilon)\Gamma_{\infty}(T^y).$$
Then, for $y = \sum^n_{i=1}e_i \otimes y_i \in \ell^n_{p'}(OH_n)$ we have
\begin{align*}
\begin{split}
\norm{y}_{\ell^n_{p'}(OH_n)} & = \Big(\sum^n_{i=1}\norm{y_i}^{p'}_{OH_n}\Big)^{\frac{1}{p'}} =
\Big(\sum^n_{i=1}\norm{T^y e_i}^{p'}_{OH_n}\Big)^{\frac{1}{p'}}\\ & \leq
\pi_{p', cb}(T^y) \norm{\sum^n_{i=1}e_i \otimes e_i}_{\ell^n_{p'} \otimes_{\min}\ell^n_p}\\
& = \pi_{p', cb}(T^y) \norm{\ell^n_p \rightarrow \ell^n_p,\,\, e_i \mapsto e_i}_{cb}\\ & = \pi_{p', cb}(T^y)
 \leq \pi_{p', cb}(\beta)\norm{\alpha}_{cb}\\ & \leq C\norm{\beta}_{cb}\norm{\alpha}_{cb} \leq C(1+\epsilon)\Gamma_{\infty}(T^y)
\end{split}
\end{align*}
for some constant $C > 0$ coming from the inclusion (1).
\vspace{0.4cm}

(2) $\Rightarrow$ (1)

With the same reason as above it is enough to consider $OH_n$ instead of $OH$.
Let $u : B(H) \rightarrow OH_n$. Then for any $(x_i)^m_{i=1} \subseteq B(H)$ and
$v : \ell^m_p \rightarrow B(H), \,\, e_i \mapsto x_i$ we have by (\ref{equi-(2)})
\begin{align*}
\begin{split}
\Big(\sum^n_{i=1}\norm{ux_i}^{p'}_{OH_n}\Big)^{\frac{1}{p'}} & =
\Big(\sum^n_{i=1}\norm{uv e_i}^{p'}_{OH_n}\Big)^{\frac{1}{p'}} \leq C \cdot\Gamma_{\infty}(uv)\\
& \leq C\norm{u}_{cb}\norm{v}_{cb} = C\norm{u}_{cb}\norm{\sum^m_{i=1}e_i\otimes x_i}_{\ell^m_{p'}\otimes_{\min} B(H)},
\end{split}
\end{align*}
which implies $\pi_{p', cb}(u) \leq C\norm{u}_{cb}$.
\vspace{0.4cm}

(2) $\Longleftrightarrow$ (3)

Again, we are enough to consider finite dimensional cases.
Note that there is a completely isomorphic embedding $OH_n \stackrel{i}{\hookrightarrow} L_p(\M)$
for a von Neumann algebra with $QWEP$. This noncommutative $L_p$ space is understood in the sense of Haagerup.
Then, by Corollary 10 of \cite{Y-p-summing} we have
\begin{align*}
\begin{split}
\pi^o_p(T_x : OH_n \rightarrow \ell^n_p) & \sim \pi^o_p(T_x \circ i^* : i(OH_n)^* \rightarrow \ell^n_p)\\
& = \norm{I_{\ell^n_p} \otimes i (x)}_{\ell^n_p(L_p(\M))} \sim \norm{x}_{\ell^n_p(OH_n)}
\end{split}
\end{align*}
for any $x \in \ell^n_p(OH_n)$.

\end{proof}

\begin{rem}\label{rem-dualprob}{\rm
By a similar argument as the above theorem we can show that for $1< p < 2$ and
$\frac{1}{p} + \frac{1}{p'} = 1$ the followings are equivalent.
	\begin{itemize}
	\item[$(1')$] For any Hilbert space $H$ we have $$CB(B(H), OH) \subseteq \Pi^o_{p'}(B(H), OH).$$
	\item[$(2')$] There is a constant $C>0$ such that
		$$\pi^o_1(T_x : OH \rightarrow S_p) \leq C \norm{x}_{S_p(OH)}$$ for all $x \in S_p(OH)$ and
	$T_x : OH \rightarrow S_{p}$, the linear map naturally associated to $x$.
	\item[$(3')$] $\Pi^o_p(OH, S_p) \subseteq \Pi^o_1(OH, S_p)$.
	\end{itemize}
At the time of this writing we could not answer this question.

If we look at the condition $(3')$, then (3) of Theorem \ref{thm-dualprob} is a particular case the above question,
which we are dealing with diagonals. Thus, it is natural to consider columns and rows as the next candidate of particular cases.
That is to say we are interested in the following question.
	\begin{itemize}
	\item[$(3'')$] $\Pi^o_p(OH, C_p) \subseteq \Pi^o_1(OH, C_p)$. (resp. $\Pi^o_p(OH, R_p) \subseteq \Pi^o_1(OH, R_p),$)
	\end{itemize}
which is true and can be explained in a similar way yet the calculation is much simpler.
}\end{rem}

Now we focus on the $n$-dimensional ($n\in \n$) case of (2) of Theorem \ref{thm-dualprob}.
The right-hand side term $\norm{x}_{\ell^n_p(OH_n)}$ is easy to describe,
so the point is to describe the left-hand side term $\pi^o_1(T_x : OH_n \rightarrow \ell^n_p)$ in a concrete way.

Suppose there are embeddings $$OH \stackrel{i}\hookrightarrow E \subseteq L_1(\M)\;\text{and}\;
\ell_p \stackrel{j}\hookrightarrow F \subseteq L_1(\N)$$ for some von Neumann algebras $\M$ and $\N$ with $QWEP$
and cb-projections $$P : L_1(\M) \rightarrow E \; \text{and}\; Q : L_1(\N) \rightarrow F$$ with
$P|_E = I_E$ and $Q|_F = I_F$, then by Lemma 4.4 and 4.5 in \cite{J05} we have
\begin{align*}
\begin{split}
\pi^o_1(T_x : OH_n \rightarrow \ell^n_p) & \sim \pi^o_1(j \circ T_x \circ i^* : i(OH_n)^* \rightarrow j(\ell^n_p))\\
& = \norm{i \otimes j (x)}_{L_1(\M) \widehat{\otimes} L_1(\N)}
\end{split}
\end{align*}
for all $x \in OH_n \otimes \ell^n_p$.
Thus, it would be the first task to find such embeddings with $E$ and $F$ are concrete spaces, which will be considered in the following section.

\section{Embeddings of various spaces into the noncommutative $L_1$ space with respect to a von Neumann algebra with $QWEP$}\label{sec-embeddings}

\subsection{Some aspects of real interpolation approach.}\label{subsec-real-interpolation}

When we want to embed $OH$ into the predual of a von Neumann algebra it is very important to observe that
it is completely isomorphic to a subspace of quotient of $R\oplus C$ (\cite{J05, P04, X-Rp-Embedding}).
Similarly, the embedding of $S_p$ ($1< p <2$) (\cite{JP-LpLq}) starts with the observation that $C_p$ and $R_p$ are
completely isomorphic to a subspace of quotient of $R\oplus OH$ and $C\oplus OH$, respectively.
In this section we review the real interpolation approach by Xu (\cite{X-Real-Embedding, X-Rp-Embedding}) to the above observations.

Let $1<p<\infty$, $\theta = \frac{1}{p}$ and $\alpha \in \mathbb{R}$. For a Banach space $X$ we denote $X$-valued
$L_2(\mathbb{R}^{+}, t^{2\alpha}\frac{dt}{t})$ space by $L_2(t^\alpha ; X)$. Now we let 
$$K_{\theta} = L^c_2(t^{-\theta}; \ell_2) +_2 L^r_2(t^{1-\theta}; \ell_2)
\,\, \text{and}\,\, J_{\theta} = L^c_2(t^{-\theta}; \ell_2) \cap_2 L^r_2(t^{1-\theta}; \ell_2).$$

Let $C_{\theta; K}$ be the subspace of $K_{\theta}$ consisting of constant functions and
$C_{\theta; J}$ be the quotient space of $J_{\theta}$ by the subspace of mean zero functions.
If we look at the Banach space level then $C_{\theta; K}$ and $C_{\theta; J}$ are nothing but the interpolation of $\ell_2$ with itself,
so that we clearly recover $\ell_2$ regardless of $\theta$. However by posing column and row Hilbert space structure in the above way we get
a completely isomorphic copy of $C_p$, which now depends on $\theta = \frac{1}{p}$.
Note that $(C_{\theta; J})^* = C_{1- \theta; K}$ completely isometrically.

\begin{prop}\label{prop-Rp-Embedding}
Let $1<p<\infty$ and $\theta = \frac{1}{p}$. Then, $C_p$ and $C_{\theta; K}$ are completely isomorphic allowing constant depending only on
$\theta$. More precisely, we have $$\norm{\sum^n_{i,j=1}x_{ij}\otimes {\bf 1}\otimes e_{ij}}_{M_m(C_{\theta; K})}
\sim \theta^{-\frac{1}{2}}(1-\theta)^{-\frac{1}{2}}\norm{\sum^n_{i,j=1}x_{ij}\otimes e_{ij}}_{M_m(C_p)},$$
where ${\bf 1}$ implies the constant scalar function with value $1$.
\end{prop}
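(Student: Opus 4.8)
The plan is to compute the $M_m$-matrix norm of a constant function in $K_\theta$ directly and to match it with the $M_m(C_p)$-norm, keeping careful track of the dependence on $\theta$. Writing the element as a constant function with value $x=\sum_{i,j}x_{ij}\otimes e_{ij}$, the operator-space structure of the $\ell_2$-sum $+_2$ expresses its matrix norm (up to a universal constant) as an infimum over pointwise decompositions $x=G(t)+H(t)$, where $G$ is measured in the weighted column space $L^c_2(t^{-\theta};\ell_2)$ and $H$ in the weighted row space $L^r_2(t^{1-\theta};\ell_2)$. Using the explicit matrix norms of weighted column and row $L_2$-spaces (column $\mapsto G^*G$, row $\mapsto HH^*$, each integrated against its weight) this reads
\begin{align*}
\norm{x}^2_{M_m(C_{\theta;K})}
&=\inf_{x=G(t)+H(t)}\Big[\,\norm{\int_0^\infty G(t)^*G(t)\,t^{-2\theta}\tfrac{dt}{t}}_{M_m}\\
&\qquad\qquad +\norm{\int_0^\infty H(t)H(t)^*\,t^{2-2\theta}\tfrac{dt}{t}}_{M_m}\Big].
\end{align*}

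In the commuting (scalar) model — a single $\ell_2$-mode with value $c$ — the infimum decouples and one minimizes $\abs{g}^2t^{-2\theta}+\abs{c-g}^2t^{2-2\theta}$ for each $t$, giving the pointwise value $\frac{t^{2-2\theta}}{1+t^2}\abs{c}^2$ and, after integration against $\frac{dt}{t}$,
\[ \norm{\mathbf{1}\otimes c}^2_{C_{\theta;K}}=\abs{c}^2\int_0^\infty\frac{t^{1-2\theta}}{1+t^2}\,dt=\frac{\pi}{2\sin(\pi\theta)}\,\abs{c}^2. \]
Since $\sin(\pi\theta)\asymp\theta(1-\theta)$ on $(0,1)$, we get $(\tfrac{\pi}{2\sin\pi\theta})^{1/2}\sim\theta^{-1/2}(1-\theta)^{-1/2}$ with universal constants, which already pins down the claimed constant at the Banach level and exhibits its origin in the $\csc$-integral.

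The genuine difficulty is the operator-space upgrade: at the matrix level the column term involves $G^*G$ while the row term involves $HH^*$, with the operator norms \emph{outside} the integrals, so one can neither minimize pointwise nor simultaneously diagonalize. I would resolve this through the complex-interpolation description $C_p=[C,R]_\theta$ (the column counterpart of the identity $R^n_p=[R_n,C_n]_{1/p}$ recorded in Section \ref{sec-prelim}). For the upper bound, take an analytic family $F$ on the strip with $F(\theta)=x$ whose boundary values are controlled in $C$ on $\mathrm{Re}=0$ and in $R$ on $\mathrm{Re}=1$; transporting the strip to $\mathbb{R}^+$ via $t=e^{\pi s}$ turns $F$ into a decomposition $x=G(t)+H(t)$ for which $\int G^*G\,t^{-2\theta}\frac{dt}{t}$ and $\int HH^*\,t^{2-2\theta}\frac{dt}{t}$ are simultaneously dominated by $\norm{x}^2_{C_p}$ times the $\csc$-integral above. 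For the lower bound, the inverse transform turns any admissible decomposition back into an analytic family, bounding $\norm{x}_{[C,R]_\theta}=\norm{x}_{C_p}$ from above by the product of the two integral norms with the reciprocal constant.

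The \emph{main obstacle} will be making a single decomposition compatible with the mismatched column ($G^*G$) and row ($HH^*$) structures while tracking the exact $\theta$-behaviour of the constant through the noncommutative computation; this is precisely the quantitative real-versus-complex interpolation comparison for the couple $(C,R)$ carried out by Xu, which one may either cite from \cite{X-Real-Embedding, X-Rp-Embedding} or reprove via the analytic-family argument above. The completely isomorphic (not merely bounded) nature of the equivalence comes for free because the whole construction is natural in the matrix index $m$, so every estimate holds uniformly in $m$.
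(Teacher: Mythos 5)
Your scalar computation is correct and genuinely valuable: the pointwise minimization giving $\frac{t^{2-2\theta}}{1+t^2}\abs{c}^2$ and the integral $\int_0^\infty\frac{t^{1-2\theta}}{1+t^2}\,dt=\frac{\pi}{2\sin\pi\theta}\sim\theta^{-1}(1-\theta)^{-1}$ is exactly the source of the factor $\theta^{-\frac{1}{2}}(1-\theta)^{-\frac{1}{2}}$, which is precisely the point the paper makes in its (one-line) proof, namely that this factor was dropped in Xu's Theorem 3.3 of \cite{X-Rp-Embedding} and arises from interpolating two $L_2$ spaces with different weights (\cite{BL76}). However, your proposed mechanism for the matrix-level upgrade has a genuine gap. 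Passing from an analytic family $F$ on the strip with boundary control in $M_m(C)$ and $M_m(R)$ to a measurable decomposition $x=G(t)+H(t)$ with \emph{simultaneous} control of $\norm{\int G^*G\,t^{-2\theta}\frac{dt}{t}}_{M_m}$ and $\norm{\int HH^*\,t^{2-2\theta}\frac{dt}{t}}_{M_m}$ is not a formal consequence of complex interpolation: in general one only has $[A_0,A_1]_\theta\subseteq (A_0,A_1)_{\theta,\infty;K}$, not the $(\theta,2;K)$ membership you need, and the reverse direction (turning a measurable decomposition back into an analytic family) is likewise unavailable for a non-Hilbertian couple such as $\big(M_m(L_2^c),M_m(L_2^r)\big)$ when $m>1$. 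The equivalence of the real $(\theta,2;K)$-method with $C_p=[C,R]_\theta$ here is the entire content of the proposition, so declaring it to be "precisely the quantitative real-versus-complex comparison carried out by Xu" and offering to reprove it by the analytic-family argument is circular.

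The route that actually closes this gap --- used by Xu and spelled out in this paper for the analogous Proposition \ref{prop-Rp-C-OH-embedding} --- is different: one invokes Pisier's formula (Theorem 8.4 of \cite{P96})
\begin{equation*}
\norm{x}_{M_m(C_p)}=\sup\Big\{\Big(\sum_k\norm{ax_kb}_2^2\Big)^{\frac{1}{2}}:\norm{a}_{S^m_{2p}},\norm{b}_{S^m_{2p'}}\leq 1,\ a,b>0\Big\},
\end{equation*}
which converts the non-commutative matrix norm into a supremum of \emph{Hilbertian} quantities. For each fixed $a,b$ the relevant couple becomes a pair of commuting weighted $\ell_2(S^m_2)$ spaces, where your scalar $K$-functional computation applies verbatim and yields the constant $c_\theta\sim\theta^{-\frac{1}{2}}(1-\theta)^{-\frac{1}{2}}$; one then checks (as in the displayed $\mathrm{tr}_m$-estimates of the paper's proof of Proposition \ref{prop-Rp-C-OH-embedding}) that any decomposition admissible for $M_m(C_{\theta;K})$ is admissible for every such Hilbert couple, giving one inclusion, and obtains the reverse inclusion by duality via the $J$-method. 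If you replace your analytic-family step by this reduction, your argument becomes a complete proof; as written, the crucial step is asserted rather than proved.
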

\begin{proof}
See Theorem 3.3 of \cite{X-Rp-Embedding}. Note that the factor of $\theta^{-\frac{1}{2}}(1-\theta)^{-\frac{1}{2}}$
was ignored in the proof, which should have appeared when we were dealing with the interpolation of two $L_p$ spaces
with different measures (see \cite{BL76}).
\end{proof}

For $1< p <2$ we can consider two variations of the above interpolation. Now we pose row and operator (resp. column and operator)
Hilbert space structure as follows, so that we get $C_p$ (resp. $R_p$).

For $0< \theta < 1$ we let
	$$K_{c, \theta} = L^r_2(t^{-\theta}; \ell_2) +_2 L^{oh}_2(t^{1-\theta}; \ell_2),\,\,
	K_{r, \theta} = L^c_2(t^{-\theta}; \ell_2) +_2 L^{oh}_2(t^{1-\theta}; \ell_2),$$
	$$\,\,J_{c, \theta} = L^c_2(t^{-\theta}; \ell_2) \cap_2 L^{oh}_2(t^{1-\theta}; \ell_2)\;
	\text{and}\,\, J_{r, \theta} = L^r_2(t^{-\theta}; \ell_2) \cap_2 L^{oh}_2(t^{1-\theta}; \ell_2).$$
Let $C_{c, \theta; K}$ (resp. $R_{r, \theta; K}$) be the subspace of $K_{c, \theta}$ (resp. $K_{r, \theta}$)
consisting of constant functions and $R_{c, \theta; J}$ (resp. $C_{r, \theta; J}$)
be the quotient space of $J_{c, \theta}$ (resp. $J_{r, \theta}$) by the subspace of mean zero functions.
Note that $$(R_{c, \theta; J})^* = C_{c, \theta; K}\,\, (\text{resp.}\,\, (C_{r, \theta; J})^* = R_{r, \theta; K})$$
completely isometrically.

\begin{prop}\label{prop-Rp-C-OH-embedding}
Let $1< p < 2$, $\frac{1}{p} + \frac{1}{p'}= 1$ and $\theta = \frac{2}{p'}$. Then,
$C_p$ and $C_{c, \theta; K}$ (resp. $R_p$ and $R_{r, \theta; K}$) are
completely isomorphic allowing constant depending only on $\theta$.
More precisely, we have
	$$\norm{\sum^n_{i,j=1}x_{ij}\otimes {\bf 1}\otimes e_{ij}}_{M_m(C_{c, \theta; K})}
	\sim \theta^{-\frac{1}{2}}(1-\theta)^{-\frac{1}{2}}\norm{\sum^n_{i,j=1}x_{ij}\otimes e_{ij}}_{M_m(C_p)}.$$
The situation for $R_{r, \theta; K}$ is similar.
\end{prop}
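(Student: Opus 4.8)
The plan is to run the \emph{same} real-interpolation computation that underlies Proposition \ref{prop-Rp-Embedding} and Theorem 3.3 of \cite{X-Rp-Embedding}, but now applied to the couple $(R, OH)$ in place of $(C, R)$, and then to identify the resulting interpolation space as $C_p$ by reiteration. The guiding observation is that $R$, $C$ and $OH$ are all Hilbertian operator spaces, so Xu's analysis of the constant-function subspace of a weighted $\ell_2$-sum is insensitive to which two of them sit at the endpoints.

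First I would apply that machinery to $K_{c, \theta} = L^r_2(t^{-\theta}; \ell_2) +_2 L^{oh}_2(t^{1-\theta}; \ell_2)$. Writing a constant function $x\mathbf{1} = f(t) + g(t)$ and minimizing the sum of the two weighted $L_2$-norms over all decompositions reduces, pointwise in $t$, to the quadratic $K$-functional of $x$ for the couple $(R, OH)$; integrating this against the two weights $t^{-2\theta}\frac{dt}{t}$ and $t^{2(1-\theta)}\frac{dt}{t}$ reproduces the real interpolation norm. As in \cite{X-Real-Embedding, X-Rp-Embedding}, the exponent $2$ together with the Hilbertian nature of the endpoints lets one identify this real norm with the operator-space complex interpolation norm, so that $C_{c, \theta; K} \cong [R, OH]_\theta$ completely isomorphically (i.e. with uniformly controlled $M_m$-norms). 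Comparing the two differing weighted measures produces the normalizing factor $\theta^{-1/2}(1-\theta)^{-1/2}$ — exactly the point flagged through \cite{BL76} in the proof of Proposition \ref{prop-Rp-Embedding}.

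It then remains to compute $[R, OH]_\theta$ for $\theta = 2/p'$. Using $OH = [C, R]_{1/2}$ (operator-space complex interpolation, \cite{P96}) and the reiteration theorem I would write $[R, OH]_\theta = [[C,R]_1, [C,R]_{1/2}]_\theta = [C, R]_{1 - \theta/2}$; with $\theta = 2/p'$ the exponent equals $1 - 1/p' = 1/p$, so $[R, OH]_\theta = [C, R]_{1/p} = C_p$. As a sanity check, $\theta \to 0$ gives the endpoint $C_1 = R$ and $\theta \to 1$ gives $C_2 = OH$, matching the two building blocks. Combining this with the previous paragraph yields the stated equivalence with constant $\sim \theta^{-1/2}(1-\theta)^{-1/2}$. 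The row statement for $R_{r,\theta;K}$ is entirely symmetric: one replaces $R$ by $C$ throughout and uses $[C, OH]_{2/p'} = [R,C]_{1/p} = R_p$.

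The step I expect to be the main obstacle is the first one: upgrading the pointwise $K$-functional minimization to a \emph{completely} isomorphic identification with the complex interpolation space, that is, controlling all the $M_m(\cdot)$ norms simultaneously for the couple $(R, OH)$ and verifying that the only loss is the announced scalar factor $\theta^{-1/2}(1-\theta)^{-1/2}$. The reiteration identity $[R, OH]_{2/p'} = C_p$ is purely formal, but the operator-space coincidence of the real and complex interpolation norms, with uniform cb-constants and the correct dependence on $\theta$, is precisely where the delicate measure-comparison of \cite{BL76} must be re-examined for an $OH$-endpoint rather than a row or column endpoint.
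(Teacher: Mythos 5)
Your overall strategy --- rerun Xu's computation for the couple $(R,OH)$ and recognize the answer as $C_p$ --- matches the paper's in spirit, and the reiteration bookkeeping $[R,OH]_{2/p'}=[C,R]_{1-\theta/2}=[C,R]_{1/p}=C_p$ is a clean way to see why the exponent $\theta=2/p'$ appears. But the step you yourself flag as the ``main obstacle'' is a genuine gap, and it cannot be closed by appealing to the Hilbertian nature of the endpoints. In the operator space category there is no general coincidence of real and complex interpolation, even for homogeneous Hilbertian spaces: the constructions $C_{\theta;K}$ and $C_{\theta;J}$ are built from the same Banach couple $(\ell_2,\ell_2)$ yet carry genuinely different matrix norms (the paper only records the duality $(C_{\theta;J})^*=C_{1-\theta;K}$), and identifying either one with a complex interpolation space is precisely the content of the proposition, not a citable principle. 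So ``identify this real norm with the operator-space complex interpolation norm'' is the statement to be proved, and your proposal defers rather than supplies the argument.

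Concretely, the two directions of the equivalence are not symmetric, and your pointwise-$K$-functional picture only yields one of them. The paper starts from Pisier's formula (Theorem 8.4 of \cite{P96}) $\norm{x}_{M_m(C_p)}=\sup_{a,b}\bigl(\sum_k\norm{ax_kb}_2^2\bigr)^{1/2}$ and, for each fixed $(a,b)$, builds commuting multiplication operators $A_0=L_{a^p}$ and $A_1=L_{a^{p/2}}R_{b^{p'/2}}$ so that the weighted Hilbert norms $\norm{A_0 f}$ and $\norm{A_1 g}$ are dominated by the $M_m(R)$- and $M_m(OH)$-norms of the pieces of any decomposition $x=f+g$ (the $OH$ piece via $(7.3)'$ of \cite{P03}); integrating against the two weights and invoking the weighted-$L_2$ interpolation of \cite{BL76} gives the inclusion $C_{c,\theta;K}\subseteq C_p$ with constant $c_\theta^{-1}\sqrt2$, where $c_\theta\sim\theta^{-1/2}(1-\theta)^{-1/2}$. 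The reverse inequality cannot be obtained by ``reversing'' this: the optimal decomposition of $x$ for a fixed $(a,b)$ depends on $(a,b)$, so the infimum over decompositions and the supremum over $(a,b)$ do not interchange. The paper instead proves the companion inclusion $R_{c,\theta;J}\subseteq R_p$ by the $J$-method, with the different operators $A_0=R_{b^p}$ and $A_1=L_{a^{p'/2}}R_{b^{p/2}}$, and then dualizes using $(R_{c,\theta;J})^*=C_{c,\theta;K}$. Your proposal contains no substitute for this $J$-method/duality half, so as written it would at best establish one of the two inequalities in the stated equivalence.
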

\begin{proof}
The following proof is similar to that of Theorem 3.3 of \cite{X-Rp-Embedding}.
Recall that (Theorem 8.4 of \cite{P96}) for $x = (x_k) \in M_m(C_p)$ we have
	$$\norm{x}_{M_m(C_p)} = \sup\Big\{ \Big(\sum_{k\geq 1}\norm{ax_k b}^2_2 \Big)^{\frac{1}{2}} :
	\norm{a}_{S^m_{2p}}, \norm{b}_{S^m_{2p'}} \leq 1,\,\, a,b > 0 \Big\}.$$
For fixed $a$ and $b$ with $\norm{a}_{S^m_{2p}}, \norm{b}_{S^m_{2p'}} \leq 1$ and $a,b > 0$
we consider
	$$A_0 = L_{a^p}\;\, \text{and}\;\, A_1 = L_{a^{\frac{p}{2}}}R_{b^{\frac{p'}{2}}},$$
where $L_{\alpha}$ and $R_{\beta}$ implies left and right multiplications by $\alpha$ and $\beta$,
respectively, on $H = \ell_2(S^m_2)$. Then $A_0$ and $A_1$ are commuting invertible
positive bounded operators on $H$, and $A_i$ induces an equivalent norm $\norm{\cdot}_i$ on $H$ as follows :
	$$\norm{x}_i := \norm{A_i x},\;\, i=0,1.$$
Let $H_i$ be $H$ equipped with $\norm{\cdot}_i$.
Then $(H_0, H_1)$ becomes a compatible couple of Hilbert spaces, which can be identified as
a couple of weighted $L_2$ spaces. Then by real interpolation of $L_2$ spaces with different weight
(see \cite{BL76}) we have
	$$\Big(\sum_{k\geq 1}\norm{ax_k b}^2_2\Big)^{\frac{1}{2}} = \norm{A_0^{1-\theta}A_1^\theta x}
	\sim c^{-1}_{\theta}\norm{x}_{(H_0, H_1)_{2, \theta; K}}$$ for some $c_\theta \sim \theta^{-\frac{1}{2}}(1-\theta)^{-\frac{1}{2}}$.

Now we suppose $\norm{x}_{M_m(C_{c, \theta; K})} < 1$. Then, there are $f \in M_m(L^r_2(t^{-\theta}; \ell_2))$
and $g \in M_m(L^{oh}_2(t^{1-\theta}; \ell_2))$ such that $x = f(t) + g(t)$ for almost all $t \in (0,\infty)$,
	$$\norm{\int^{\infty}_0 \sum_{k\geq 1} f_k(t)f^*_k(t)t^{-2\theta}\frac{dt}{t}}_{M_m} < 1$$
and
	$$\norm{\int^{\infty}_0 \sum_{k\geq 1} g_k(t)\otimes
	\overline{g_k(t)}t^{2(1-\theta)}\frac{dt}{t}}_{M_m \otimes_{\min} \overline{M_m}} < 1.$$
Moreover, we have
	\begin{align*}
	\norm{f}^2_{L_2(t^{-\theta} ;H_0)} & = \int^{\infty}_0 \norm{f(t)}^2_{H_0}t^{-2\theta}\frac{dt}{t}\\
	& = \int^{\infty}_0 \sum_{k\geq 1} \text{tr}_m(a^p f_k(t)f^*_k(t)a^p)t^{-2\theta}\frac{dt}{t}\\
	& = \text{tr}_m\Big(a^{2p}\int^{\infty}_0 \sum_{k\geq 1} f_k(t)f^*_k(t)t^{-2\theta}\frac{dt}{t}\Big)\\
	& \leq \norm{a^{2p}}_1\norm{\int^{\infty}_0 \sum_{k\geq 1} f_k(t)f^*_k(t)t^{-2\theta}\frac{dt}{t}}_{M_m} <1
	\end{align*}
and by $(7.3)'$ of \cite{P03}
\begin{align*}
\begin{split}
\norm{g}^2_{L_2(t^{1-\theta} ;H_1)} & = \int^{\infty}_0 \norm{g(t)}^2_{H_1}t^{2(1-\theta)}\frac{dt}{t}\\
& = \int^{\infty}_0 \sum_{k\geq 1} \text{tr}_m(a^{\frac{p}{2}}g_k(t)b^{p'}g^*_k(t)a^{\frac{p}{2}})t^{2(1-\theta)}\frac{dt}{t}\\
& = \text{tr}_m \Big(\int^{\infty}_0 \sum_{k\geq 1} a^p g_k(t)b^{p'}g^*_k(t)t^{2(1-\theta)}\frac{dt}{t}\Big)\\
& \leq \norm{a^p}_2 \norm{b^{p'}}_2\norm{\int^{\infty}_0 \sum_{k\geq 1} g_k(t)\otimes
\overline{g_k(t)}t^{2(1-\theta)}\frac{dt}{t}}_{M_m \otimes M_m} <1
\end{split}
\end{align*}
Thus, we have $\norm{x}_{(H_0, H_1)_{2, \theta; K}} < \sqrt{2}$,
and consequently
	$$C_{c, \theta; K} \subseteq C_p\;\, \text{with cb-norm}\;\, \leq c^{-1}_{\theta}\sqrt{2}.$$

Using J-method we can similarly show that
	$$R_{c, \theta; J} \subseteq R_p\;\, \text{with cb-norm} \;\, \leq c_{\theta}\sqrt{2}.$$
In this case we need to take $A_0 = R_{b^p}$ and $A_1 = L_{a^{\frac{p'}{2}}}R_{b^{\frac{p}{2}}}$.
Then, by duality we get the desired cb-isomorphism.

The proof for $R_p$ and $R_{r, \theta; K}$ is similar.
\end{proof}

\subsection{The case of $OH$}\label{subsec-L2L1}
In this section we consider the case of $OH$, which was first done by Junge (\cite{J05})
and explained in different forms by Pisier (\cite{P04}) and Xu (\cite{X-Rp-Embedding}).

We will continue to employ the real interpolation approach as in the previous section.
Now we set $\theta = \frac{1}{2}$ and consider a discretization
$K_{\frac{1}{2}, \delta}$ ($1< \delta \leq 2$) of $K_{\frac{1}{2}}$ defined by
\begin{equation}\label{discretization}
K_{\frac{1}{2}, \delta} := \ell^c_2(\delta^{-\frac{k}{2}}; \ell_2) +_2 \ell^r_2(\delta^{\frac{k}{2}}; \ell_2),
\end{equation}
where $\ell_2(\delta^{k\alpha}; \ell_2)$ denotes the weighted $\ell_2(\n)$-valued $\ell_2$ space on $\z$
with respect to the weight $(\delta^{2k\alpha})_{k \in \z}$. Then,
$K_{\frac{1}{2}, \delta}$ is $\delta$-completely isomorphic to $K_{\frac{1}{2}}$.
In order to show that $K_{\frac{1}{2}, \delta}$ can be embedded into the predual of a von Neumann algebra
we need some tools from free probability.

Let $\Hi$ be a Hilbert space with Hilbert space basis $(e_{\pm n})_{n\geq 1}$.
Then we consider the full Fock space $\F(\Hi) =\mathbb{C}\Om \oplus_{n \geq 1}\Hi^{\otimes n}$,
the left creation operator $\ell(e)$ and the left annihilation operator $\ell^*(e)$
on $\F(\Hi)$ associated to $e\in \Hi$. Let $$g_n = \lambda_n^{-\frac{1}{2}}\ell(e_n) +
\lambda_n^{\frac{1}{2}}\ell^*(e_{-n})$$ for some sequence $(\lambda_n)_{n\geq 1}$ of strictly positive real numbers.
These $g_n$'s are called ``generalized circular elements" by Shlyakhtenko (\cite{S97, PS02}), and it is well known that the
von Neumann algebra $\M$ generated by $\{g_n : n\geq 1\}$ has $QWEP$. Moreover, if we let $D_{\Phi}$ be the density of the
vector state $\Phi$ on $\M$ determined by the vacuum vector $\Om$,
then $D_{\Phi}^{\frac{1}{2}}g_n D_{\Phi}^{\frac{1}{2}} \in
L_1(\M)$ and the operator space
$$G_* = \overline{\text{span}}\{D_{\Phi}^{\frac{1}{2}}g_n D_{\Phi}^{\frac{1}{2}} : n\geq 1\} \subseteq L_1(\M)$$
is 2-completely isomorphic to $\ell^c_2(\n, \lambda_n^{\frac{1}{2}}) + \ell^r_2(\n,\lambda_n^{-\frac{1}{2}})$
and is 2-completely complemented in $L_1(\M)$. Note that we have (\cite{S97})
\begin{equation}\label{modular-condition}
D_{\Phi}^{\frac{1}{2}}g_n D_{\Phi}^{\frac{1}{2}} = \lambda_n g_n D_{\Phi} = \lambda^{-1}_n D_{\Phi} g_n.
\end{equation}

Now we go back to our original concern $K_{\frac{1}{2}, \delta}$. If we set
$$\Hi = \ell_2(\z; \ell_2) \oplus_2 \ell_2(\z; \ell_2)$$ with basis
$\{e_k \otimes e_j : k \in \z,\,\, j\in \n\}\cup \{f_k \otimes e_j : k \in \z,\,\, j\in \n\}$
and $$\lambda_{k,j} = \delta^k\,\, \text{for $k\in \z$ and $j\in \n$},$$ then the corresponding
$\M_{\n}\; (= \M^\delta_{\n}) = \{g_{k,j} : k \in \z,\,\, j\in \n \}''$, where
$$g_{k,j}\; (= g^\delta_{k,j}) = \delta^{-\frac{k}{2}}\ell(e_k \otimes e_j) + \delta^{\frac{k}{2}}\ell^*(f_k \otimes f_j),$$
and $$G^{\n}_*\; (= G^{\n}_*(\delta)) = \overline{\text{span}}\{D_{\Phi}^{\frac{1}{2}}g_{k,j} D_{\Phi}^{\frac{1}{2}}
: k \in \z,\,\, j\in \n \}\subseteq L_1(\M_{\n}) $$ is our desired embedding.

More precisely if we set $M(j) = \{g_{k,j} : k \in \z\}''$, then $M_j$'s are all isomorphic and free each other.
Let $\phi_j$ be the restriction of $\Phi$ on $M(j)$, and we set $$(\M_n, \Phi) = *^n_{j=1}(M(j), \phi_j).$$
Note that $\M_{\infty} = \M_{\n}$.
Now we denote 
\begin{equation}\label{notation1}
\text{$M(1)$, $\phi_1$ and $(g_{k,1})_{k\in \z}$ by simply $M$, $\phi$ and $(g_k)_{k\in \z}$,}
\end{equation}
respectively, and let $\rho_j : M \hookrightarrow \M_n = *^n_{j=1}M_j$ be the natural embedding into the $j$-th component.
Then since $$\rho_j(g_k) = g_{k,j}\,\, \text{and}\,\, \rho_j(D_{\phi}^{\frac{1}{2}} x D_{\phi}^{\frac{1}{2}})
= D_{\Phi}^{\frac{1}{2}} \rho_j(x) D_{\Phi}^{\frac{1}{2}}$$ for the density $D_{\phi}$ of $\phi$
and $x\in M$ we have the following with the help of Proposition \ref{prop-Rp-Embedding}. 
This observation is a combination of the ideas in \cite{X-Rp-Embedding} and \cite{P04}.

\begin{prop}\label{prop-OH-Embedding}
Let $n \in \n \cup\{\infty\}$ and $1< \delta \leq 2$. Then $OH_n$ is cb-embedded in a completely complemented subspace 
$$G^n_* = \overline{\text{span}}\{D_{\Phi}^{\frac{1}{2}}g_{k,j} D_{\Phi}^{\frac{1}{2}}
: k \in \z,\,\, 1\leq j \leq n \}\subseteq L_1(\M_{n})$$ with the constants independent of $\delta$ and $n$ by the following embedding.
$$v^\delta_n : OH_n \rightarrow G^n_* \subseteq L_1(*^n_{j=1}M_j), \,\, e_j \mapsto
\rho_j\Big(\sum_{k\in \z} D_{\phi}^{\frac{1}{2}}g_k D_{\phi}^{\frac{1}{2}}\Big) =
\sum_{k\in \z}D_{\Phi}^{\frac{1}{2}}g_{k,j}D_{\Phi}^{\frac{1}{2}}.$$ 
\end{prop}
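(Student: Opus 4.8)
The plan is to realize $v^\delta_n$ as a composition of three maps whose cb-norms are all controlled independently of $\delta$ and $n$, thereby reducing everything to Proposition \ref{prop-Rp-Embedding} and the quoted properties of generalized circular elements. Write $\xi_j := \sum_{k\in\z} D_\Phi^{\frac12} g_{k,j} D_\Phi^{\frac12}$, so that $v^\delta_n(e_j)=\xi_j$ and the range of $v^\delta_n$ is the $n$-dimensional subspace $\overline{\mathrm{span}}\{\xi_j : 1\le j\le n\}$ of $G^n_*$. Two facts must then be assembled: first, that $G^n_*$ is completely complemented in $L_1(\M_n)$; and second, that this particular $n$-dimensional subspace is cb-isomorphic to $OH_n$ with uniform constants.

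The complementation is immediate from the material recalled above. The family $\{g_{k,j}:k\in\z,\,1\le j\le n\}$ is a family of generalized circular elements on the single Fock space $\F(\Hi)$, so Shlyakhtenko's result applies directly: $G^n_*$ is $2$-completely complemented in $L_1(\M_n)$ and is $2$-completely isomorphic to $\ell^c_2(\{\delta^{k/2}\}_{k,j}) + \ell^r_2(\{\delta^{-k/2}\}_{k,j})$, the weights being $\lambda_{k,j}^{\pm1/2}=\delta^{\pm k/2}$. Since the weights depend only on $k$, after the harmless reindexing $k\mapsto -k$ and the universal equivalence of the various $+_p$-sums recorded in Section \ref{sec-prelim}, this sum space is the discretized space in \eqref{discretization} restricted to the coordinates $1\le j\le n$ (the index $j$ playing the role of the auxiliary $\ell_2$). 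Under this identification $D_\Phi^{\frac12}g_{k,j}D_\Phi^{\frac12}$ is the $k$-th weighted basis vector in the $j$-th coordinate, so $\xi_j=\sum_k(\cdots)$ corresponds precisely to the constant-in-$k$ function supported on the $j$-th coordinate; the two one-sided representations of $\xi_j$ furnished by the modular relation \eqref{modular-condition} are what legitimize reading off both the column and the row descriptions. Hence $\overline{\mathrm{span}}\{\xi_j\}$ is carried onto the constant-function subspace $C_{\frac12;K}$ of $K_{\frac12,\delta}$ over the first $n$ coordinates.

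It then remains to compute this constant-function subspace. Using that $K_{\frac12,\delta}$ is $\delta$-completely isomorphic to $K_{\frac12}$ (with $1<\delta\le 2$) and that the discretization carries constant sequences to constant functions, I would invoke Proposition \ref{prop-Rp-Embedding} with $\theta=\frac12$ and $p=2$: restricting its statement to the first $n$ coordinates of the auxiliary Hilbert space gives $C_{\frac12;K}\cong C_2=OH_n$ with cb-constant $\sim\theta^{-\frac12}(1-\theta)^{-\frac12}=2$, a number independent of both $\delta$ and $n$. Composing the canonical identification $OH_n\cong C_{\frac12;K}$, the inclusion $C_{\frac12;K}\hookrightarrow K_{\frac12,\delta}$, and the $2$-complete isomorphism $K_{\frac12,\delta}\cong G^n_*$ recovers $v^\delta_n$ and exhibits it as a cb-isomorphism onto its range whose cb-norm and inverse cb-norm are bounded by a universal multiple of the product of the three (uniform) constants.

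The main obstacle I anticipate is not a hard inequality — all the analytic content is already packaged in Proposition \ref{prop-Rp-Embedding} and in Shlyakhtenko's identification — but the bookkeeping needed to make the identification of $\xi_j$ with the constant function rigorous and $\delta$-uniform. Concretely, one must check that the discretization isomorphism $K_{\frac12,\delta}\cong K_{\frac12}$ sends the discrete constant sequences onto the continuous constant functions up to a $\delta$-independent constant; that the restriction of Proposition \ref{prop-Rp-Embedding} to $n$ coordinates leaves its constant $\theta^{-1/2}(1-\theta)^{-1/2}$ untouched (a genuine amplification point, since the proposition is stated at the matrix level $M_m$); and that the weight and index conventions (the swap $k\mapsto -k$ and the passage between the $+_p$-sums) are correctly aligned with the Fock-space weights $\lambda_{k,j}=\delta^k$. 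Once these routine but delicate verifications are in place, the uniform bounds on the three composed maps yield the assertion.
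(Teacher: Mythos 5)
Your proposal is correct and follows essentially the same route the paper intends: the paper itself gives no detailed proof, merely asserting the result ``with the help of Proposition \ref{prop-Rp-Embedding}'' as a combination of Shlyakhtenko's identification of $G^n_*$ with $\ell^c_2(\delta^{k/2})+\ell^r_2(\delta^{-k/2})$, the $\delta$-uniform discretization $K_{\frac12,\delta}\cong K_{\frac12}$, and Proposition \ref{prop-Rp-Embedding} at $\theta=\frac12$ identifying the constant-function subspace with $OH$. Your three-map factorization, the sign swap $k\mapsto -k$ aligning the Fock-space weights with \eqref{discretization}, and the bookkeeping caveats you flag are exactly the content the authors leave implicit.
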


\subsection{The case of $S_p$ ($1<p<2$)}\label{subsec-SpL1}
In this section we consider the case of $S_p$ ($1<p<2$) following the very recent work of Junge and Parcet (\cite{JP-LpLq}).
The starting point of this embedding is the factorization $$S_p = C_p \otimes_h R_p$$ and cb-embeddings
$$C_p \hookrightarrow (R \oplus_2 OH)/(R \cap_2 \ell^{oh}_2(\lambda))^\perp\,\, \text{and}\,\,
R_p \hookrightarrow (C \oplus_2 OH)/(C \cap_2 \ell^{oh}_2(\lambda))^\perp$$
obtained by a generalized version of ``Pisier's exercise", where $\ell^{oh}_2(\lambda)$ means
the operator Hilbert space on the weighted $\ell_2$ space with respect to the weight $\lambda^2$ for a sequence of strictly positive real numbers
$\lambda = (\lambda_k)_{k\geq 1}$.

The next step is to consider a diagonal operator $\ds_{\lambda^4} = \sum_k \lambda^4_k e_{kk}$,
which can be regarded as the density $D_\psi$ associated to a normal strictly semifinite faithful (n.s.s.f. in short)
weight $\psi$ on $B(\ell_2)$. Let $q_n$ be the projection $\sum_{k\leq n}e_{kk}$ and $\psi_n$ be
the restriction of $\psi$ to the subalgebra $M_n = q_nB(\ell_2)q_n$.
Now we set $$\ku_n = \psi_n(q_n) = \sum^n_{k=1}\lambda^4_k,$$ and let $\varphi_n$ and $\tphi_n$ be states on $M_n$
and $M_n \oplus M_n$, respectively, defined by
$$\varphi_n = \psi_n/\ku_n\,\, \text{and}\,\, \tphi_n(x,y) = \frac{1}{2}(\varphi_n(x)+\varphi_n(y))$$
for $x,y \in M_n$.

If $\ku_n$ is an integer, then we have a nice embedding of
$$\K_{1,2}(\psi_n) = [(R_n \oplus_2 OH_n)/(R_n \cap_2 \ell^{oh}_2(\lambda))^\perp]
\otimes_h [(C_n \oplus_2 OH_n)/(C_n \cap_2 \ell^{oh}_2(\lambda))^\perp]$$ as follows.

\begin{prop}\label{prop-Rp-Embedding2}
Assume that $\text{\upshape{k}}_n = \sum^n_{k=1}\lambda^4_k$ is an integer and define
$$\A_n = *^{\ku_n}_{j=1}(M_n \oplus M_n, \tphi_n).$$
If $\pi_j : M_n \oplus M_n \rightarrow \A_n$ is the natural embedding into the $j$-th component of $\A_n$, then the mapping
$$w_n : \K_{1,2}(\psi_n) \rightarrow L_1(\A_n ; OH_{\ku_n}), \,\, x \mapsto
\frac{1}{\ku_n}\sum^{\ku_n}_{j=1}\pi_j (x, -x)\otimes e_j$$ is a cb-embedding with constants independent of $n$.
\end{prop}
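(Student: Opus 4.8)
The plan is to establish the two-sided complete estimate $\|w_n(x)\|_{L_1(\A_n; OH_{\ku_n})} \sim \|x\|_{\K_{1,2}(\psi_n)}$ by computing the target norm explicitly via free probability, running every computation with matrix coefficients so that the resulting constants are completely bounded and independent of $n$. The decisive elementary observation is that $\tphi_n(x,-x) = \frac12(\varphi_n(x) - \varphi_n(x)) = 0$, so each summand $\pi_j(x,-x)$ is centered with respect to the free product state $\Phi$. Freeness of the $\ku_n$ components $\pi_j(M_n \oplus M_n)$ together with this centering is what forces all off-diagonal cross terms to vanish: under the canonical $\Phi$-preserving conditional expectation $\mathcal{E}$ one has $\mathcal{E}\big(\pi_i(x,-x)^*\pi_j(x,-x)\big) = 0$ and $\mathcal{E}\big(\pi_i(x,-x)\pi_j(x,-x)^*\big) = 0$ for $i \ne j$.

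Next I would reduce the target norm to square functions. By the definition $L_1(\A_n; OH_{\ku_n}) = [L_1(\A_n; R^{\ku_n}_1),\, L_1(\A_n; C^{\ku_n}_1)]_{\frac12}$ and the explicit row and column $L_1$-formulas recorded in Section \ref{sec-prelim}, the norm of $w_n(x) = \frac{1}{\ku_n}\sum_j \pi_j(x,-x)\otimes e_j$ becomes an interpolation at $\tfrac12$ between a row square function built from $\sum_j \mathcal{E}\big(\pi_j(x,-x)\pi_j(x,-x)^*\big)$ and a column square function built from $\sum_j \mathcal{E}\big(\pi_j(x,-x)^*\pi_j(x,-x)\big)$, the cross terms having disappeared by the previous step. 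Since the $\ku_n$ copies are identical and $\pi_j(x,-x)\pi_j(x,-x)^* = \pi_j(xx^*,xx^*)$ (up to the density factors), each sum collapses to $\ku_n$ times a single expression governed by $\tphi_n$; combined with the normalisation $1/\ku_n$ and the weight $\ku_n = \sum_k \lambda^4_k$, this reproduces precisely the weighted norms defining the quotients $(R_n \oplus_2 OH_n)/(R_n \cap_2 \ell^{oh}_2(\lambda))^\perp$ and $(C_n \oplus_2 OH_n)/(C_n \cap_2 \ell^{oh}_2(\lambda))^\perp$.

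Finally I would assemble the two legs into the Haagerup tensor product. The row piece pairs with the first factor $(R_n \oplus_2 OH_n)/(R_n \cap_2 \ell^{oh}_2(\lambda))^\perp \cong C_p$ and the column piece with $(C_n \oplus_2 OH_n)/(C_n \cap_2 \ell^{oh}_2(\lambda))^\perp \cong R_p$ through Proposition \ref{prop-Rp-C-OH-embedding}, and the manner in which freeness couples the left against the right multiplications (column against row) is exactly what upgrades the interpolation at $\tfrac12$ to the Haagerup tensor norm $\otimes_h$, matching $S_p = C_p \otimes_h R_p$. Repeating the identical computation with matrix coefficients $\mathrm{id}_{M_m}\otimes w_n$ — all the preliminary formulas being stated at the $S^m_1$ level — then yields complete boundedness of both $w_n$ and its inverse on the range, with constants depending only on the fixed $p$ and not on $n$.

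The hard part will be the last two steps: matching the free-product square functions to the weighted quotient norms and, above all, recovering the Haagerup tensor product with a constant independent of $n$. This is precisely where the integrality hypothesis $\ku_n \in \n$ is indispensable, since it provides exactly $\ku_n$ free legs to feed the internal $OH_{\ku_n}$, and where a careful bookkeeping of the modular densities $D_\Phi^{\frac12}(\cdot)D_\Phi^{\frac12}$ via \eqref{modular-condition} is needed so that the centered free summands are correctly normalised in $L_1(\A_n)$.
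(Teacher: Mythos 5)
The paper does not actually prove this proposition: its ``proof'' is a citation of Lemma 2.11 of \cite{JP-LpLq}, so what you are attempting is a reproof of that lemma, and your sketch has genuine gaps at exactly the two places you yourself flag as ``the hard part''. First, the reduction to square functions is not a proof. The vanishing of cross terms in the row and column square functions is automatic from the matrix units (since $e_{1i}e_{j1}=\delta_{ij}e_{11}$, one has $RR^*=(\sum_j y_jy_j^*)\otimes e_{11}$ for any row $R=\sum_j y_j\otimes e_{1j}$) and has nothing to do with freeness or centering. Where freeness and the centering $\tphi_n(x,-x)=0$ actually enter is in evaluating $\norm{(\sum_j \pi_j(x,-x)\pi_j(x,-x)^*)^{1/2}}_{L_1(\A_n)}$ for $\ku_n$ free identically distributed copies, i.e.\ in a free Rosenthal/Khintchine-type inequality in $L_1$ --- and that inequality, which carries all the $n$-independence of the constants, is nowhere supplied. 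Moreover, $L_1(\A_n;OH_{\ku_n})$ is \emph{defined} as the complex interpolation space $[L_1(\A_n;R^{\ku_n}_1),L_1(\A_n;C^{\ku_n}_1)]_{1/2}$, and the norm of a concrete element there is not ``the interpolation of the two end norms of that element'': the upper bound $\norm{y}_{1/2}\le\norm{y}_0^{1/2}\norm{y}_1^{1/2}$ is cheap, but the matching lower bound requires producing optimal analytic families (equivalently, the $OH$-valued free Khintchine inequality in $L_1$), which is the substance of the result.

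Second, the passage from the row/column square functions to the Haagerup tensor product $\K_{1,2}(\psi_n)$ is asserted rather than argued. The target norm is a quotient of a four-term sum coming from $(R_n\oplus_2 OH_n)\otimes_h(C_n\oplus_2 OH_n)$, and your two-legged picture does not by itself produce the mixed terms; the claim that freeness ``upgrades the interpolation at $\tfrac12$ to the Haagerup tensor norm'' is precisely the conclusion of Lemma 2.11 of \cite{JP-LpLq}, whose proof proceeds through the identification of $\K_{1,2}(\psi_n)$ with a sum of conditional $L_1$- and $L_2$-spaces and a nontrivial duality, not through a direct interpolation computation. There is also a small but real bookkeeping point: $x\in\K_{1,2}(\psi_n)$ is not literally an element of $M_n\oplus M_n$, so the expression $\pi_j(x,-x)$ presupposes the identification of $\K_{1,2}(\psi_n)$ with $S^n_1$ as a vector space together with the correct placement of the densities via \eqref{modular-condition}, which you mention but do not carry out. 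As it stands the proposal is a plausible road map, but a complete argument must either reproduce the Junge--Parcet machinery or cite it, as the paper does.
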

\begin{proof}
See Lemma 2.11 of \cite{JP-LpLq}.
\end{proof}

Combining with Proposition \ref{prop-OH-Embedding} we get an embedding
$\K_{1,2}(\psi_n)\hookrightarrow L_1(\A_n \overline{\otimes} \M_{\ku_n})$
by $(I_{L_1(\A_n)}\otimes v_{\ku_n})\circ w_n$.
Now we consider the embedding for $$\K_{1,2}(\psi)= [(R \oplus_2 OH)/(R \cap_2 \ell_2(\lambda)^{oh})^\perp]
\otimes_h [(C \oplus_2 OH)/(C \cap_2 \ell_2(\lambda)^{oh})^\perp].$$
Note that we may assume that $\ku_n = \sum^n_{k=1}\lambda^4_k$'s are non-decreasing positive integers
since we may approximate each $\ku_n$ by its closest integer.
This allows us to recover $\K_{1,2}(\psi)$ by a completely isometric embedding
$$\K_{1,2}(\psi)= \overline{\cup_{n\geq 1}\K_{1,2}(\psi_n)} \hookrightarrow \prod_{n, \U}\K_{1,2}(\psi_n).$$
Thus, according to \cite{Ray02} we get a cb-embedding
$$\K_{1,2}(\psi) \hookrightarrow L_1(\B)\,\, \text{with}\,\, \B = \Big(\prod_{n,\U}(\A_n \overline{\otimes} \M_{\ku_n})_*\Big)^*,$$
and by the stability of $QWEP$ with respect to free product, tensor product and ultraproduct (\cite{J02, J05}) $\B$ also satisfies $QWEP$.

However the embedding above is not appropriate for our purpose, since we do not know whether $\K_{1,2}(\psi)$ itself
is cb-complemented in $L_1(\B)$ or not, so that we need to find another embedding of $\K_{1,2}(\psi)$
which is cb-complemented in the noncommutative $L_1$ space with respect to a von Neumann algebra with $QWEP$. 
We will use the following noncommutative version of Rosenthal's inequality
for identically distributed random variables in $L_1$ from \cite{J-Araki} and \cite{JP-LpLq}.

Let $\N$ and $\A$ be $\sigma$-finite von Neumann algebras
with a normal faithful conditional expectation $\Es_\N : \A \rightarrow \N$.
We recall that a family of von Neumann algebras $(\As_k)_{k\geq 1}$ satisfying $\N \subseteq \As_k \subseteq \A$
is a system of {\it symmetrically independent copies over} $\N$ ({\it s.i.c.} in short) when

\begin{itemize}
\item[\bf{(i)}] If $a \in \left\langle \As_1, \cdots, \As_{k-1}, \As_{k+1}, \cdots \right\rangle$
and $b\in \As_k$, then we have $$\Es_\N(ab) = \Es_\N(a)\Es_\N(b).$$

\item[\bf{(ii)}] There is a von Neumann algebra $\As$ containing $\N$, a normal faithful conditional expectation
$\Es_0 : \As \rightarrow \N$ and isomorphisms $\pi_k : \As \rightarrow \As_k$ such that $$\Es_\N \circ \pi_k = \Es_0$$
and the following holds for every permutation $\alpha$ of the integers
$$\Es_\N(\pi_{j_1}(a_1)\cdots \pi_{j_m}(a_m)) = \Es_\N(\pi_{\alpha(j_1)}(a_1)\cdots \pi_{\alpha(j_m)}(a_m)).$$

\item[\bf{(iii)}] There is a normal faithful conditional expectation $\mathcal{E}_k : \A \rightarrow \As_k$
such that $$\Es_\N = \Es_0 \pi_k^{-1}\mathcal{E}_k.$$
\end{itemize}

\begin{prop}\label{prop-Rosenthal}
Let $\N$, $\A$ and $(\As_k)_{k\geq 1}$ are as before and $(\As_k)_{k\geq 1}$ is a system of s.i.c. over $\N$.
Then for $x\in L_1(\As)$ with $\Es_0(x) = 0$ we have
$$\norm{\sum^n_{k=1}\pi_k(x)}_{L_1(\A)} \sim \inf_{x = x_1 + x_2 + x_3} n\norm{x_1}_{L_1(\As)}
+ n^{\frac{1}{2}}\norm{x_2}_{L^r_1(\As, \Es_0)} + n^{\frac{1}{2}}\norm{x_3}_{L^c_1(\As, \Es_0)}.$$
\end{prop}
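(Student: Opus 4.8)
The plan is to read this as the identically-distributed specialization of the noncommutative Rosenthal (Burkholder) inequality in $L_1$, and to prove the two estimates $\lesssim$ and $\gtrsim$ separately. Throughout I use that each $\pi_k$ is a trace-preserving $*$-isomorphism of $\As$ onto $\As_k$, hence an isometry on $L_1$, and that by property \textbf{(i)} together with $\Es_0(x)=0$ the family $(\pi_k(x))_k$ is mean-zero and independent over $\N$. It is convenient to reduce to mean-zero square-function pieces: given a decomposition $x=x_1+x_2+x_3$, I absorb $\Es_0(x_2)$ and $\Es_0(x_3)$ into the first piece (using $\Es_0(x)=0$ and the operator inequality $\Es_0(x_2x_2^*)\geq \Es_0(x_2)\Es_0(x_2)^*$), which changes the three terms only by a bounded factor, so I may assume $\Es_0(x_2)=\Es_0(x_3)=0$.

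For the upper bound I estimate $S_i := \sum_{k=1}^n \pi_k(x_i)$ termwise. The first term is immediate from the triangle inequality and the $L_1$-isometry of the $\pi_k$, giving $\norm{S_1}_{L_1(\A)} \le n\norm{x_1}_{L_1(\As)}$. For the row term I compute the conditioned square function: since $\pi_k$ is a $*$-homomorphism and the cross terms $\Es_\N(\pi_j(x_2)\pi_k(x_2)^*)$ with $j\neq k$ vanish by property \textbf{(i)} applied to the mean-zero $x_2$, one obtains $\Es_\N(S_2 S_2^*) = n\,\Es_0(x_2 x_2^*)$, whence $\norm{S_2}_{L^r_1(\A,\Es_\N)} = n^{\frac12}\norm{x_2}_{L^r_1(\As,\Es_0)}$. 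Combined with the contractive inclusion $L^r_1(\A,\Es_\N)\hookrightarrow L_1(\A)$ (a conditional Jensen/Cauchy--Schwarz inequality) this yields $\norm{S_2}_{L_1(\A)}\le n^{\frac12}\norm{x_2}_{L^r_1(\As,\Es_0)}$, and symmetrically $\norm{S_3}_{L_1(\A)}\le n^{\frac12}\norm{x_3}_{L^c_1(\As,\Es_0)}$. Adding the three estimates and taking the infimum over decompositions gives the $\lesssim$ direction.

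For the lower bound I would argue by duality. The right-hand side is, by definition, the norm of $x$ in the weighted sum space $n\,L_1(\As) + n^{\frac12}L^r_1(\As,\Es_0) + n^{\frac12}L^c_1(\As,\Es_0)$, whose dual is the corresponding weighted intersection of $\As$ with the conditioned row and column $L_2$ (BMO-type) spaces. Thus it suffices to produce, for a suitable test element in the unit ball of that dual intersection, a pairing with $\sum_k \pi_k(x)$ recovering each of the three pieces; equivalently, one invokes the dual Rosenthal estimate in $\A$ for independent copies, which for an identically-distributed family again produces the factors $n^{-1}, n^{-\frac12}, n^{-\frac12}$ on the three dual norms. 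This is precisely the content of the noncommutative Rosenthal inequality for s.i.c.\ systems established in \cite{J-Araki} and \cite{JP-LpLq}, specialized to a single distribution.

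I expect the lower bound to be the main obstacle. Producing the optimal three-term decomposition $x=x_1+x_2+x_3$ is not explicit and rests on the full noncommutative martingale machinery (the dual Doob and Burkholder--Gundy inequalities), together with a careful treatment of the density weights hidden in the definitions of $L^r_1(\As,\Es_0)$ and $L^c_1(\As,\Es_0)$. The delicate point is that the $\ell_1$-type ``diagonal'' term and the two square-function terms genuinely interact, and one must verify that all constants remain independent of $n$.
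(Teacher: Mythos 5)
Your proposal is consistent with the paper, which does not actually prove this proposition but simply cites Theorem 6.11 of \cite{J-Araki} and Lemma 4.9 of \cite{JP-LpLq}; your upper-bound argument (centering each piece, which is legitimate precisely because $\Es_0(x)=0$ makes the three means sum to zero, combined with operator Jensen, the triangle inequality, the vanishing of cross terms from property \textbf{(i)}, and the resulting identity $\Es_\N(S_2S_2^*)=n\,\Es_0(x_2x_2^*)$) is a correct and standard verification of the easy direction. For the lower bound you, like the paper, ultimately defer to the noncommutative Rosenthal inequality of the same two references, so there is no substantive divergence in approach.
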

\begin{proof}
See Theorem 6.11 of \cite{J-Araki} and Lemma 4.9 of \cite{JP-LpLq}.
\end{proof}

Now we turn our attention back to $\K_{1,2}(\psi_n)$ and assume that $\text{\upshape{k}}_n = \sum^n_{k=1}\lambda^4_k$
is an integer as before. Then it is clear that $$(\pi_j(M_n \oplus M_n)\overline{\otimes}\rho_j(M))^{\ku_n}_{j=1}$$
is {\it s.i.c. over $\mathbb{C}$} with
$$\A = \A_n \overline{\otimes} \M_{\ku_n},\,\, \As = (M_n\oplus M_n) \overline{\otimes} M,$$
$$\Es_{\mathbb{C}}= *^{\ku_n}_{j=1}\tphi_n \otimes *^{\ku_n}_{j=1}\phi \,\,\text{and}\,\,
\Es_0 = \tphi_n \otimes \phi,$$ where $M$ and $\phi$ are from \eqref{notation1}, so that we can calculate the norm of the image of
$(I_{L_1(\A_n)}\otimes v_{\ku_n})\circ w_n$ as follows.

\begin{prop}\label{prop-NormCalculation}
Assume that we are in the same situation as in Proposition \ref{prop-Rp-Embedding2}
and let $\gamma_1 = \sum_{k\in \z}D_{\phi}^{\frac{1}{2}}g_{k}D_{\phi}^{\frac{1}{2}} \in L_1(M)$, where $M$, $\phi$ and $(g_k)_{k\in \z}$ 
are from \eqref{notation1}. Then for $x\in L_1(M_n)$ we have
\begin{align*}
\begin{split}
\lefteqn{\norm{\sum^{\ku_n}_{k=1}\pi_j(x, -x)\otimes \rho_j(\gamma_1)}_{L_1(\A)}}\\
& \sim \inf_{x\otimes \gamma_1 = x_1 + x_2 + x_3} \ku_n\norm{x_1}_{L_1(\As')}
+ \ku_n^{\frac{1}{2}}\norm{x_2}_{L^r_1(\As', \Es_1)} + \ku_n^{\frac{1}{2}}\norm{x_3}_{L^c_1(\As', \Es_1)},
\end{split}
\end{align*}
where $\As' = M_n \overline{\otimes} M$ and $\Es_1 = \varphi_n \otimes \phi$.
\end{prop}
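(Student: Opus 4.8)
The plan is to read the displayed equivalence as a direct instance of the noncommutative Rosenthal inequality (Proposition \ref{prop-Rosenthal}) and then to transport the resulting infimum from the symmetrised algebra $\As = (M_n\oplus M_n)\,\overline{\otimes}\, M$ down to $\As' = M_n\,\overline{\otimes}\, M$. As recorded just before the statement, the family $(\pi_j(M_n\oplus M_n)\,\overline{\otimes}\,\rho_j(M))_{j=1}^{\ku_n}$ is s.i.c. over $\mathbb{C}$, with ambient algebra $\A = \A_n\,\overline{\otimes}\,\M_{\ku_n}$, generating algebra $\As$, copies $\pi_j\otimes\rho_j$ and state $\Es_0 = \tphi_n\otimes\phi$; the element to which the copies are applied is $w = (x,-x)\otimes\gamma_1\in L_1(\As)$, since $\pi_j(x,-x)\otimes\rho_j(\gamma_1) = (\pi_j\otimes\rho_j)(w)$. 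First I would check the mean-zero hypothesis: because $\tphi_n(x,-x) = \tfrac12(\varphi_n(x)+\varphi_n(-x)) = 0$ for every $x$, we get $\Es_0(w) = \tphi_n(x,-x)\,\phi(\gamma_1) = 0$, so Proposition \ref{prop-Rosenthal} applies verbatim and yields
\[ \norm{\textstyle\sum_j \pi_j(x,-x)\otimes\rho_j(\gamma_1)}_{L_1(\A)} \sim \inf_{w = w_1+w_2+w_3}\ku_n\norm{w_1}_{L_1(\As)} + \ku_n^{\frac12}\norm{w_2}_{L^r_1(\As,\Es_0)} + \ku_n^{\frac12}\norm{w_3}_{L^c_1(\As,\Es_0)}. \]
It therefore remains only to identify this infimum with the one in the statement, i.e. with decompositions of $x\otimes\gamma_1$ inside $\As'$ measured by $\Es_1 = \varphi_n\otimes\phi$.

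For the transport I would use the antidiagonal embedding $\iota\colon z\mapsto(z,-z)$ of $L_1(\As')$ into $L_1(\As) = L_1(\As')\oplus_1 L_1(\As')$, whose image is exactly the $-1$-eigenspace of the flip $\sigma\otimes\mathrm{id}_M$, where $\sigma(a,b) = (b,a)$; note $w = \iota(x\otimes\gamma_1)$. Two elementary computations drive everything. First, on the two summands $\Es_0$ restricts to $\tfrac12\Es_1$, so $\Es_0(\iota(z)\iota(z')^*) = \Es_1(z z'^*)$ and $\Es_0(\iota(z)^*\iota(z')) = \Es_1(z^* z')$; since over $\N = \mathbb{C}$ the row and column $L_1(\cdot,\Es_0)$-norms are the Hilbertian norms attached to these $\Es$-valued inner products, $\iota$ is \emph{isometric} for both structures, i.e. $\norm{\iota(z)}_{L^r_1(\As,\Es_0)} = \norm{z}_{L^r_1(\As',\Es_1)}$ and likewise for $L^c_1$. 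Second, $\norm{\iota(z)}_{L_1(\As)} = 2\norm{z}_{L_1(\As')}$.

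For the comparison of infima I would argue in both directions. Given a decomposition $x\otimes\gamma_1 = z_1+z_2+z_3$ in $\As'$, the antidiagonal lift $w_i = \iota(z_i)$ decomposes $w$ and, by the two computations above, costs at most twice as much, so the $\As$-infimum is $\le 2$ times the $\As'$-infimum. For the reverse inequality I would use the antisymmetrising projection $\mathcal{P} = \tfrac12(\mathrm{id}-\sigma\otimes\mathrm{id}_M)$. Since $\sigma\otimes\mathrm{id}_M$ is a state-preserving $*$-automorphism of $\As$, it acts isometrically on $L_1(\As)$, $L^r_1(\As,\Es_0)$ and $L^c_1(\As,\Es_0)$, so $\mathcal{P}$ is a norm-$\le 1$ idempotent onto the antidiagonal for all three norms and fixes $w$. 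Applying $\mathcal{P}$ to any decomposition $w = w_1+w_2+w_3$ produces an antidiagonal decomposition $w = \sum_i \iota(z_i)$ with $\norm{\iota(z_i)}\le\norm{w_i}$ in each norm; reading these back through the isometries of the previous paragraph gives $\sum_i z_i = x\otimes\gamma_1$ at no larger cost, so the $\As'$-infimum is $\le$ the $\As$-infimum. Combining the two directions yields the claimed equivalence with constants between $1$ and $2$.

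The routine parts are the mean-zero check and the two norm identities for $\iota$; the step deserving real care — and the main obstacle — is the contractivity of $\mathcal{P}$ on the \emph{row} and \emph{column} $L_1$-norms, i.e. the functoriality of $L^r_1(\As,\Es_0)$ and $L^c_1(\As,\Es_0)$ under the state-preserving flip. This is exactly what legitimises restricting the Rosenthal infimum to antidiagonal decompositions and thereby collapsing $M_n\oplus M_n$ back to $M_n$; once it is in place, the densities enter only through the harmless factor $2$ in the $L_1$-term.
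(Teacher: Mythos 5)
Your argument is correct and is essentially the paper's own proof written out in full: the paper likewise applies Proposition \ref{prop-Rosenthal} to $(x,-x)\otimes\gamma_1$ and collapses $M_n\oplus M_n$ to $M_n$ via the completely contractive map $(x,y)\mapsto\frac{1}{2}(x-y)$, which is exactly your $\iota^{-1}\circ\mathcal{P}$. (One small quibble: with the state $\tphi_n=\frac12(\varphi_n\oplus\varphi_n)$ the antidiagonal embedding $\iota$ scales the row and column norms by a factor $2$ rather than being isometric, but this only perturbs your constants, not the equivalence.)
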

\begin{proof}
This is a direct application of Proposition \ref{prop-Rosenthal} taking the completely contractive map
$L_1(\As) \rightarrow L_1(\As'), \,\, (x,y) \mapsto \frac{1}{2}(x-y)$ into account.
\end{proof}

Now we consider a cb-embedding of $\K_{1,2}(\psi_n)$ into
$$\K^1_{RC_1}(\psi_n \otimes \phi) = \ku_n L_1(\As') + \ku_n^{\frac{1}{2}} L^{r_1}_2(\As')
+ \ku_n^{\frac{1}{2}} L^{c_1}_2(\As').$$
More precisely, we have $$\norm{x}_{S^m_1(\K^1_{RC_1}(\psi_n \otimes \phi))} = \inf\Big\{ \ku_n\norm{x_1}_{S^m_1(L_1(\As'))}
+  \ku_n^{\frac{1}{2}}\norm{x_2}_{S^m_1(L^{r_1}_2(\As'))} + \ku_n^{\frac{1}{2}}\norm{x_3}_{S^m_1(L^{c_1}_2(\As'))} \Big\},$$
where the infimum runs over all possible decompositions
$$x = x_1 + (I_{S^m_1}\otimes D_{\varphi_n \otimes \phi}^{\frac{1}{2}})x_2 +
x_3(I_{S^m_1}\otimes D_{\varphi_n \otimes \phi}^{\frac{1}{2}}),$$ where $D_{\varphi_n \otimes \phi}$ is the density of $\varphi_n \otimes \phi$.

\begin{thm}\label{thm-Rp-Embedding4}
Assume that we are in the same situation as in the Proposition \ref{prop-NormCalculation},
then the mapping $$u_n : \K_{1,2}(\psi_n) \rightarrow \K^1_{RC_1}(\psi_n \otimes \phi), \,\,
x \mapsto \frac{1}{\ku_n}x \otimes \gamma_1$$ is a cb-embedding with constants independent of $n$.
Furthermore, $\K^1_{RC_1}(\psi_n \otimes \phi)$ is completely complemented in
$L_1(*^{\ku_n}_{j=1}(\As' \oplus \As'))$ with constants independent of $n$.
\end{thm}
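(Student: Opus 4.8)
The plan is to obtain $u_n$ as the composition of the two cb-embeddings already constructed and then to read its norm off the (operator-valued) Rosenthal estimate. First I would form
$$(I_{L_1(\A_n)}\otimes v_{\ku_n})\circ w_n : \K_{1,2}(\psi_n) \To L_1(\A_n\overline{\otimes}\M_{\ku_n}),$$
where $w_n$ is the cb-embedding of Proposition \ref{prop-Rp-Embedding2} and $v_{\ku_n}$ the $OH$-embedding of Proposition \ref{prop-OH-Embedding}. Since $w_n$ is a cb-embedding with constants independent of $n$, and since $v_{\ku_n}$ is a \emph{completely complemented} cb-embedding (Proposition \ref{prop-OH-Embedding}), its amplification $I_{L_1(\A_n)}\otimes v_{\ku_n}$ is again a cb-embedding with the same constants in this vector-valued $L_1$ setting. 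A direct computation shows that the image of $x$ is exactly $\frac{1}{\ku_n}\sum^{\ku_n}_{j=1}\pi_j(x,-x)\otimes\rho_j(\gamma_1)$, i.e.\ precisely the element whose norm is evaluated in Proposition \ref{prop-NormCalculation}.

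It then remains to match norms. By the definition of the sum space $\K^1_{RC_1}(\psi_n\otimes\phi)$ recalled just before the theorem, the right-hand side of Proposition \ref{prop-NormCalculation} is exactly $\norm{\frac{1}{\ku_n}x\otimes\gamma_1}_{\K^1_{RC_1}(\psi_n\otimes\phi)}=\norm{u_n(x)}$, once the $L^r_1,L^c_1$ norms are transported to the $L^{r_1}_2,L^{c_1}_2$ description. Carrying this out at every matrix level — that is, invoking the $S^m_1$-amplified forms of Propositions \ref{prop-Rosenthal} and \ref{prop-NormCalculation}, which are the versions actually established in \cite{J-Araki,JP-LpLq} and which the matrix formula for the $\K^1_{RC_1}$-norm already anticipates — yields
$$\norm{u_n(x)}_{\K^1_{RC_1}}\sim\norm{(I_{L_1(\A_n)}\otimes v_{\ku_n})w_n(x)}_{L_1(\A_n\overline{\otimes}\M_{\ku_n})}\sim\norm{x}_{\K_{1,2}(\psi_n)}$$
completely and uniformly in $n$, which gives the first assertion.

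For the complementation I would work directly inside the free product $*^{\ku_n}_{j=1}(\As'\oplus\As')=*^{\ku_n}_{j=1}\As$, using $\As=(M_n\oplus M_n)\overline{\otimes}M=\As'\oplus\As'$. Because free independence is a special instance of the s.i.c.\ property, the Rosenthal estimate of Proposition \ref{prop-Rosenthal} applies verbatim with ambient algebra $*_j\As$ and copies $\pi_j(\As)$; combined with the contraction $(a,b)\mapsto\frac12(a-b)$ as in Proposition \ref{prop-NormCalculation}, the map $z\mapsto\sum_j\pi_j(z,-z)$ realizes the whole sum space $\K^1_{RC_1}(\psi_n\otimes\phi)$, as $z$ runs over the mean-zero part of $L_1(\As')$, isometrically up to the absolute Rosenthal constant. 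The key point is that the construction behind Proposition \ref{prop-Rosenthal} in \cite{J-Araki,JP-LpLq} is genuinely projective: composing the completely bounded projection of $L_1(*_j\As)$ onto the span of the single-letter mean-zero words with the averaging over $j=1,\dots,\ku_n$ furnished by part (ii) of the s.i.c.\ definition produces a cb-projection onto this image. Since the Rosenthal constants are absolute and the weights $\ku_n,\ku_n^{\frac12}$ merely rescale the three summands of $\K^1_{RC_1}$, the cb-norm of this projection stays bounded independently of $n$.

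The main obstacle is precisely this last complementation with $n$-uniform constants: the norm-matching in the first part is essentially mechanical once the operator-valued Rosenthal inequality is granted, whereas producing a completely bounded projection onto the Rosenthal image — disentangling the word-length decomposition of $L_1(*_j\As)$ and controlling the symmetrization so that the constant does not degrade as $\ku_n\to\infty$ — is the delicate step, and it is here that one must rely on the explicit projective construction of \cite{J-Araki,JP-LpLq} rather than on the norm equivalence alone.
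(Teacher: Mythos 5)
Your proof of the embedding assertion is essentially the paper's: the paper also forms $(I_{L_1(\A_n)}\otimes v_{\ku_n})\circ w_n$ from Propositions \ref{prop-Rp-Embedding2} and \ref{prop-OH-Embedding} and then applies Proposition \ref{prop-Rosenthal} with $M_m(\A)$, $M_m(\As)$ and $I_{M_m}\otimes\Es_0$ in place of $\A$, $\As$ and $\Es_0$, together with the contraction $(x,y)\mapsto\frac12(x-y)$; the only ingredient your sketch waves at (``once the norms are transported'') and the paper writes out is the pair of identities converting the $S^m_1(L^{r_1}_2(\As'))$- and $S^m_1(L^{c_1}_2(\As'))$-norms into the conditional norms $L^r_1(M_m(\As'),I_{M_m}\otimes\Es_1)$ and $L^c_1(M_m(\As'),I_{M_m}\otimes\Es_1)$ that actually occur in the amplified Rosenthal estimate. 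Where you diverge is the complementation: the paper settles it in one line by invoking Corollary 7.10 of \cite{J05}, whereas you attempt to build the projection as (cb-projection onto single-letter mean-zero words) followed by averaging over $j$. That composite is the right object, but the existence of such a cb-projection on $L_1$ of a free product with constants independent of the number $\ku_n$ of free factors is exactly the nontrivial content of the cited corollary (the complemented free Khintchine/Rosenthal inequality); your paragraph restates what must be imported rather than proving it, so it should be a citation, not an argument. Two corrections: $z$ should range over \emph{all} of $L_1(\As')$, not its mean-zero part --- the whole purpose of passing to $\As=\As'\oplus\As'$ and the antisymmetric element $(z,-z)$ is that $\Es_0(z,-z)=0$ holds automatically, and restricting to mean-zero $z$ would only recover a proper subspace of $\K^1_{RC_1}(\psi_n\otimes\phi)$; and your closing assessment of where the difficulty lies is inverted relative to the paper, for which the complementation is the imported step and the matrix-level norm identification is the part actually carried out.
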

\begin{proof}
We consider $M_m(\A)$, $M_m(\As)$ and $I_{M_m}\otimes \Es_0$ instead of $\A$, $\As$ and $\Es_0$,
respectively, and apply Proposition \ref{prop-Rosenthal} taking the contractive map
$$S^m_1(L_1(\As)) \rightarrow S^m_1(L_1(\As')), \,\, (x,y) \mapsto \frac{1}{2}(x-y)$$ into account.
Note that we have
	\begin{align*}
	\norm{(I_{S^m_1}\otimes D_{\varphi_n \otimes \phi}^{\frac{1}{2}})a}_{S^m_1(L^{r_1}_2(\As'))} & =
	m \norm{(I_{S^m_1} \otimes D_{\varphi_n \otimes \phi})a}_{L^r_1(M_m(\As'), I_{M_m}\otimes \Es_1)}
	\end{align*}
and
	\begin{align*}
	\norm{b(D_{\varphi_n \otimes \phi}^{\frac{1}{2}} \otimes I_{S^m_1})}_{S^m_1(L^{c_1}_2(\As'))}
	& = m\norm{b(D_{\varphi_n \otimes \phi}\otimes I_{S^m_1})}_{L^{c}_1(M_m(\As'), I_{M_m}\otimes \Es_1)}.
	\end{align*}
The second statement is from Corollary 7.10 of \cite{J05}.
\end{proof}

\begin{rem}{\rm
The above approach is the same as that of \cite{JP-LpLq}, which was used in constructing the embedding of $S_p$ into
the predual of a hyperfinite von Neumann algebra. However, we are using $\A_n$, the free product of $M_n \oplus M_n$
to be consistent with Proposition \ref{prop-Rp-Embedding2} instead of the tensor product of $M_n \oplus M_n$.}
\end{rem}

We can describe the operator space structure of $u_n(\K_{1,2}(\psi_n))$ more precisely. Let
\begin{align*}
\K^\delta_n & = R_n\widehat{\otimes}\,C_n \widehat{\otimes}\,(\ell^r_2(\delta^{\frac{k}{2}}) +\ell^c_2(\delta^{-\frac{k}{2}}))
\\ &  \;\;\;\; + R_n \widehat{\otimes}\,\ell^r_2(\lambda^{-2})\,\widehat{\otimes}\,
\ell^r_2(\delta^{\frac{k}{2}}) + \ell^c_2(\lambda^{-2})\, \widehat{\otimes}\, C_n \widehat{\otimes}\,
\ell^c_2(\delta^{-\frac{k}{2}}) \\ & = \K^\delta_n(L_1) + \K^\delta_n(r) + \K^\delta_n(c),
\end{align*}
where $\lambda^{-2}$ means the sequence $(\lambda^{-2}_k)_{k\geq 1}$.

\begin{prop}\label{prop-Rp-Embedding5}
Assume that we are in the same situation as in the Proposition \ref{prop-NormCalculation}. 
Let $1 <\delta \leq 2$ and $P \;(= P_\delta) : L_1(M) \rightarrow G^1_*$ be the canonical projection onto $G^1_*$. Then
$$(I_{S^n_1}\otimes P)\, \K^1_{RC_1}(\psi_n \otimes \phi)
\rightarrow \K^\delta_n,\,\, \frac{1}{\ku_n}x \otimes D^{\frac{1}{2}}_{\phi}g_k D^{\frac{1}{2}}_{\phi}
\mapsto x\otimes e_k$$ is a complete isomorphism with constants independent of $\delta$ and $n$.
\end{prop}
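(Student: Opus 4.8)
The plan is to read the map as the composition of the cb-projection $I_{S^n_1}\otimes P$ with the concrete description of $G^1_*$ as a weighted row-plus-column space, and then to analyse separately the three summands $\ku_n L_1(\As')$, $\ku_n^{\frac{1}{2}}L^{r_1}_2(\As')$ and $\ku_n^{\frac{1}{2}}L^{c_1}_2(\As')$ whose infimum defines the norm of $\K^1_{RC_1}(\psi_n\otimes\phi)$. The input I would start from is that, since $\lambda_k=\delta^k$ for $M=M(1)$, the space $G^1_*=\overline{\text{span}}\{D^{\frac{1}{2}}_\phi g_k D^{\frac{1}{2}}_\phi\}$ is $2$-completely isomorphic, uniformly in $\delta$, to $\ell^c_2(\delta^{\frac{k}{2}})+\ell^r_2(\delta^{-\frac{k}{2}})$ under $D^{\frac{1}{2}}_\phi g_k D^{\frac{1}{2}}_\phi\mapsto e_k$, and is $2$-cb-complemented in $L_1(M)$ by $P$; the two relevant scalars are $\phi(g_kg^*_k)=\lambda_k=\delta^k$ and $\phi(g^*_kg_k)=\lambda^{-1}_k=\delta^{-k}$, which are exactly the numbers that will reappear as the $\delta$-weights.

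For the $L_1$-summand I would argue that $I_{S^n_1}\otimes P$ carries $\ku_n L_1(\As')=\ku_n S^n_1(L_1(M))$ onto $\ku_n S^n_1(G^1_*)=\ku_n R_n\widehat{\otimes}C_n\widehat{\otimes}G^1_*$; the normalization $\frac{1}{\ku_n}$ built into the map cancels the factor $\ku_n$, and inserting the identification of $G^1_*$ recovers exactly $\K^\delta_n(L_1)=R_n\widehat{\otimes}C_n\widehat{\otimes}(\ell^r_2(\delta^{\frac{k}{2}})+\ell^c_2(\delta^{-\frac{k}{2}}))$, the two summands being interchanged by the harmless reflection $k\mapsto -k$.

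For the row summand I would use the modular relation \eqref{modular-condition}, $D^{\frac{1}{2}}_\phi g_k D^{\frac{1}{2}}_\phi=\lambda_k g_k D_\phi=\lambda^{-1}_k D_\phi g_k$, to move the density of the $M$-component entirely to one side, so that the $S^m_1(L^{r_1}_2(\As'))$-norm can be evaluated through the formula $\|(I_{S^m_1}\otimes D^{\frac{1}{2}})a\|_{S^m_1(L^{r_1}_2)}=\|(I_{M_m}\otimes(\varphi_n\otimes\phi))(aa^*)^{\frac{1}{2}}\|_{S^m_1}$ of the preliminaries. The $M$-component then contributes a $\delta$-weight built from the scalar $\phi(g_kg^*_k)=\delta^{k}$ (up to the $R_1=C$ convention and the reflection $k\mapsto-k$), giving the factor $\ell^r_2(\delta^{\frac{k}{2}})$, while the $M_n$-component, carrying the density $D_{\varphi_n}=\ds_{\lambda^4}/\ku_n$, splits in the row $L^{r_1}_2$-structure into a free index $R_n$ and a weighted index $\ell^r_2(\lambda^{-2})$; a careful count then shows that the scalars $\ku_n^{\frac{1}{2}}$, $\frac{1}{\ku_n}$ and the $\ku_n$ hidden in $D_{\varphi_n}$ combine to a factor independent of $n$, yielding $\K^\delta_n(r)$. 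The column summand is treated symmetrically, now using $\lambda^{-1}_k D_\phi g_k$, the companion formula for $L^{c_1}_2$ and the scalar $\phi(g^*_kg_k)=\delta^{-k}$, to produce $\K^\delta_n(c)$.

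Finally I would assemble the three pieces: since $I_{S^n_1}\otimes P$ respects the three-term decomposition and the infimum defining $\|\cdot\|_{\K^1_{RC_1}}$ then matches, term by term, the sum defining the norm of $\K^\delta_n$, the map is a complete isomorphism, its inverse being cb through the inclusion $G^1_*\hookrightarrow L_1(M)$ and the cb-embedding $\ell^c_2+\ell^r_2\to G^1_*$; all constants are controlled by the $2$-cb-isomorphism constant of $G^1_*$ and are therefore independent of $\delta$ and $n$. I expect the main obstacle to be exactly the row and column steps: one must carry out the estimates at the matrix level $S^m_1$ in order to obtain completely bounded, and not merely bounded, bounds, while simultaneously tracking three unrelated families of weights --- the $\lambda^4/\ku_n$ coming from $\varphi_n$, the $\delta^{\pm k}$ coming from the generalized circular elements, and the normalizations $\ku_n^{\frac{1}{2}}$ --- and checking, through the non-tracial modular relations together with the $R_1=C$, $C_1=R$ conventions defining $L^{r_1}_2$ and $L^{c_1}_2$, that they recombine into precisely the weights $\lambda^{-2}$ and $\delta^{\pm\frac{k}{2}}$ of $\K^\delta_n$.
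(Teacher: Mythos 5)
Your proposal follows essentially the same route as the paper's proof: the paper likewise first absorbs the $\ku_n^{\pm\frac{1}{2}}$ normalizations by trading $\varphi_n$ for $\psi_n$ (which is where $\ds_{\lambda^{-2}}=D_{\psi_n}^{-\frac{1}{2}}$ and hence the weight $\lambda^{-2}$ enters), and then evaluates each of the three terms of the infimum at the $S^m_1$ level using the $L^{r_1}_2/L^{c_1}_2$ trace formulas, the modular relation $g_kD_\phi=\delta^{-2k}D_\phi g_k$, and $\phi(g_kg_l^*)=\delta_{kl}\delta^k$, exactly as you outline. The only point to watch is that the row/column labels flip under the $R_1=C$, $C_1=R$ conventions (the $L^{r_1}_2$ term lands in $\K^\delta_n(c)$ and vice versa), which you correctly flag as part of the bookkeeping.
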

\begin{proof}
Let $\As' = M_n \overline{\otimes} M$. Then, for $x \in S^m_1(\K^1_{RC_1}(\psi_n \otimes \phi))$ we have
\begin{align*}
\lefteqn{\frac{1}{\ku_n}\norm{x}_{S^m_1(\K^1_{RC_1}(\psi_n \otimes \phi))}}\\ & =
\inf \Big\{ \norm{x_1}_{S^m_1(L_1(\As'))} +  \ku_n^{-\frac{1}{2}}\norm{x_2}_{S^m_1(L^{r_1}_2(\As'))}
+ \ku_n^{-\frac{1}{2}}\norm{x_3}_{S^m_1(L^{c_1}_2(\As'))} \\
& \;\;\;\;\;\;\;\;\;\;\;\; : x = x_1 + (I_{S^m_1}\otimes D_{\varphi_n \otimes \phi}^{\frac{1}{2}})x_2 +
x_3(I_{S^m_1}\otimes D_{\varphi_n \otimes \phi}^{\frac{1}{2}}) \Big\}\\
& = \inf \Big\{ \norm{y_1}_{S^m_1(L_1(\As'))} +  \ku_n^{-\frac{1}{2}}\norm{(I_{S^m_1}\otimes D_{\varphi_n \otimes \phi}^{-\frac{1}{2}})y_2}_{S^m_1(L^{r_1}_2(\As'))} \\ & \;\;\;\;\;\;\;\;\;\;\;\;
+ \ku_n^{-\frac{1}{2}}\norm{y_3(I_{S^m_1}\otimes D_{\varphi_n \otimes \phi}^{-\frac{1}{2}})}_{S^m_1(L^{c_1}_2(\As'))}
: x = y_1 + y_2 + y_3 \Big\}\\ & = \inf \Big\{ \norm{y_1}_{S^m_1(L_1(\As'))} +
\norm{(I_{S^m_1}\otimes D_{\psi_n \otimes \phi}^{-\frac{1}{2}})y_2}_{S^m_1(L^{r_1}_2(\As'))} \\ & \;\;\;\;\;\;\;\;\;\;\;\;
+ \norm{y_3(I_{S^m_1}\otimes D_{\psi_n \otimes \phi}^{-\frac{1}{2}})}_{S^m_1(L^{c_1}_2(\As'))}
: x = y_1 + y_2 + y_3 \Big\},
\end{align*}
where $D_{\psi_n \otimes \phi}$ is the density of $\psi_n \otimes \phi$.

Let $y_i = \sum_k y_{i,k}\otimes D^{\frac{1}{2}}_{\phi}g_k D^{\frac{1}{2}}_{\phi}$
for $i=1,2,3$. For the first term we have
	\begin{align*}
	\norm{\sum_k y_{1,k}\otimes D^{\frac{1}{2}}_{\phi}g_kD^{\frac{1}{2}}_{\phi}}_{S^m_1(L_1(\As'))}
	& = \norm{\sum_k y_{1,k}\otimes D^{\frac{1}{2}}_{\phi}g_k D^{\frac{1}{2}}_{\phi}}_{S^m_1(S^n_1(G^1_*))}\\ &
	\sim \norm{\sum_k y_{1,k}\otimes e_k}_{S^m_1 \widehat{\otimes} \K_n(L_1)}.
	\end{align*}

For the second term we recall that $g_k D_{\phi} = \delta^{-2k}D_{\phi}g_k$ by \eqref{modular-condition}, then we have
\begin{align*}
\lefteqn{\norm{(I_{S^m_1}\otimes D_{\psi_n \otimes \phi}^{-\frac{1}{2}})\sum_k y_{2,k}\otimes D^{\frac{1}{2}}_{\phi}g_kD^{\frac{1}{2}}_{\phi}}_{S^m_1(L^{r_1}_2(\As'))}}\\
& = \norm{\sum_k (I_{S^m_1}\otimes \ds_{\lambda^{-2}})y_{2,k}\otimes g_k D^{\frac{1}{2}}_{\phi}}_{S^m_1(S^n_1(L^{r_1}_2(\As')))}\\
& = \norm{(I_{S^m_1}\otimes \text{tr}_{\As'})\Big(\sum_{k,l} (I_{S^m_1}\otimes \ds_{\lambda^{-2}})y_{2,k} y_{2,l}^* (I_{S^m_1}\otimes \ds_{\lambda^{-2}})
\otimes  g_k D_{\phi} g^*_l \Big)^{\frac{1}{2}}}_{S^m_1}\\
& = \norm{(I_{S^m_1}\otimes \text{tr}_{\As'})\Big(\sum_{k,l} (I_{S^m_1}\otimes \ds_{\lambda^{-2}})y_{2,k} y_{2,l}^* (I_{S^m_1}\otimes \ds_{\lambda^{-2}})
\otimes \delta^{-2k} D_{\phi} g_k  g^*_l  \Big)^{\frac{1}{2}}}_{S^m_1}\\
& = \norm{\Big(\sum_{k,l} (I_{S^m_1}\otimes \ds_{\lambda^{-2}})y_{2,k} y_{2,l}^* (I_{S^m_1}\otimes \ds_{\lambda^{-2}})
\phi(g_k g^*_l) \delta^{-2k} \Big)^{\frac{1}{2}}}_{S^m_1}\\
& = \norm{\Big(\sum_{k} (I_{S^m_1}\otimes \ds_{\lambda^{-2}})y_{2,k} y_{2,k}^* (I_{S^m_1}\otimes \ds_{\lambda^{-2}})
\delta^{-k}\Big)^{\frac{1}{2}}}_{S^m_1}\\
& = \norm{\sum_{k} (I_{S^m_1}\otimes \ds_{\lambda^{-2}})y_{2,k}\otimes
\delta^{-\frac{k}{2}} e_k}_{S^m_1 \widehat{\otimes} L^c_2(M_n) \widehat{\otimes} \ell^c_2(\z)}
= \norm{\sum_{k} y_{2,k}\otimes e_k}_{S^m_1 \widehat{\otimes} \K_n(c)},
\end{align*}
where $\ds_{\lambda^{-2}}$ is the diagonal operator $\sum_k \lambda^{-2k} e_{kk}$.

Similarly, we have
	$$\norm{\sum_k y_{3,k}\otimes D^{\frac{1}{2}}_{\phi}g_kD^{\frac{1}{2}}_{\phi}(I_{S^m_1}\otimes D_{\psi_n \otimes
	\phi}^{-\frac{1}{2}})}_{S^m_1(L^{c_1}_2(\As'))} = \norm{\sum_{k} y_{3,k}\otimes e_k}_{S^m_1 \widehat{\otimes} \K_n(r)}.$$
\end{proof}

Let's consider $\K_{1,2}(\psi)$ again. Then, we may assume that $\ku_n = \sum^n_{k=1}\lambda^{4}_k$'s are non-decreasing positive integers as before.
This allows us to recover $\K_{1,2}(\psi)$ by a completely isometric embedding
$$\K_{1,2}(\psi)= \overline{\cup_{n\geq 1}\K_{1,2}(\psi_n)} \hookrightarrow \prod_{n, \U}\K_{1,2}(\psi_n).$$
Thus, according to \cite{Ray02} we get a cb-embedding
$$\K_{1,2}(\psi) \hookrightarrow \prod_{n, \U}(I_{S^n_1}\otimes P)\K^1_{RC_1}(\psi_n \otimes \phi)
\subseteq L_1(\Bs)\,\, \text{with}\,\, \Bs = \Big(\prod_{n,\U}(*^{\ku_n}_{j=1}(\As' \oplus \As'))_*\Big)^*,$$
where $\As' = M_n \overline{\otimes} M$, and by the stability of $QWEP$ with respect to free product,
tensor product and ultraproduct $\Bs$ also satisfies $QWEP$. Moreover, since each
$(I_{S^n_1}\otimes P)\K^1_{RC_1}(\psi_n \otimes \phi)$ is cb-complemented in $L_1(*^{\ku_n}_{j=1}(\As' \oplus \As'))$
with uniformly bounded cb-norms $\prod_{n, \U}(I_{S^n_1}\otimes P)\K^1_{RC_1}(\psi_n \otimes \phi)$
is also cb-complemented in $L_1(\Bs)$.

Furthermore, by Proposition \ref{prop-Rp-Embedding5} we have the cb-isomorphism
\begin{align}\label{density1}
\K_{1,2}(\psi) & \cong 
R \, \widehat{\otimes}\, C \, \widehat{\otimes}\,(\ell^r_2(\delta^{\frac{k}{2}}) +\ell^c_2(\delta^{-\frac{k}{2}}))
\\ &  \;\;\;\; + R \, \widehat{\otimes}\, \ell^r_2(\lambda^{-2})\, \widehat{\otimes}\,
\ell^r_2(\delta^{\frac{k}{2}}) + \ell^c_2(\lambda^{-2})\, \widehat{\otimes}\, C \, \widehat{\otimes}\,
\ell^c_2(\delta^{-\frac{k}{2}}). \nonumber
\end{align}

\section{The change of density}\label{sec-ChangeOfDensity}

In this section we present a concrete embedding of $\Pi^o_1(OH, S_p)$ using the materials in the previous section.
As was pointed out in Section \ref{sec-dualProb} we need to consider embeddings of $OH$ and $S_p$.
In the case of $OH$ we have by Proposition \ref{prop-OH-Embedding}
$$v^\delta_n : OH \rightarrow G^{\n}_* \subseteq L_1(\M_{\n}), \,\, e_j \mapsto
\sum_{k\in \z}D_{\Phi}^{\frac{1}{2}}g_{k,j}D_{\Phi}^{\frac{1}{2}}$$ for a fixed $\delta=2$. Moreover,
$G^{\n}_*$ is 2-completely complemented in $L_1(\M_{\n})$ and cb-isomorphic to
$$L^c_2(t^{-\frac{1}{2}}; \ell_2) + L^r_2(t^{\frac{1}{2}}; \ell_2).$$

Now we consider the case of $S_p$. Then we start with the observation
\begin{align}\label{density2}
S_p = C_p \otimes_h R_p & \hookrightarrow K_{c, \theta}\otimes_h K_{r, \theta}\\
& = \Big(L^r_2(t^{-\theta}; \ell_2) + L^{oh}_2(t^{1-\theta}; \ell_2)\Big) 
\otimes_h \Big( L^c_2(s^{-\theta}; \ell_2) + L^{oh}_2(s^{1-\theta}; \ell_2)\Big). \nonumber
\end{align}
Thus, we need to consider the situation $(R+\ell^{oh}(\lambda)) \otimes_h (C+\ell^{oh}(\lambda))$ by a suitable identification.
However, we have
\begin{align*}
\norm{x}_{M_m(R+\ell^{oh}_2(\lambda))} & \sim \inf_{x = x_1 + x_2} \norm{x_1}_{M_m(R)}
+ \norm{x_2(I_{M_m}\otimes\ds_{\lambda})}_{M_m(OH)}\\ & = \inf_{x = y_1 + y_2(I_{M_m}\otimes\ds^{-1}_{\lambda})}
\norm{y_1}_{M_m(R)} + \norm{y_2}_{M_m(OH)}\\ & \sim \norm{x}_{M_m((R \oplus_2 OH)/(R \cap_2 \ell^{oh}_2(\lambda^{-1}))^\perp)}
\end{align*}
and similarly $\norm{x}_{M_m(C+\ell^{oh}(\lambda))} \sim \norm{x}_{M_m((C \oplus_2 OH)/(C \cap_2 \ell^{oh}_2(\lambda^{-1}))^\perp)}$ for any $m\in \n$.
Thus, we have a complete isomorphism
\begin{equation}\label{density3}
(R+\ell^{oh}(\lambda))\otimes_h (C + \ell^{oh}(\lambda)) \cong \K_{1,2}(\psi^{-1}),
\end{equation}
where $\psi^{-1}$ is the weight associated to $\sum_k \lambda^{-4}_k e_{kk}$.
By combining \eqref{density1}, \eqref{density2} and \eqref{density3} we can guess that $S_p$ can be embedded in the space $\K_{S_p}$ defined by
	\begin{align}\label{changed-density}
	\K_{S_p} & = L^r_2(s^{-\theta}; \ell_2)\widehat{\otimes}L^c_2(t^{-\theta}; \ell_2)
	\widehat{\otimes}L^r_2(u^{\frac{1}{2}}) + L^r_2(s^{-\theta}; \ell_2)\widehat{\otimes}L^c_2(t^{-\theta}; \ell_2)
	\widehat{\otimes} L^c_2(u^{-\frac{1}{2}})\\
	&  \;\;\;\; + L^r_2(s^{-\theta}; \ell_2)\widehat{\otimes}L^r_2(t^{2-\theta}; \ell_2)
	\widehat{\otimes} L^r_2(u^{\frac{1}{2}}) + L^c_2(s^{2-\theta}; \ell^n_2)\widehat{\otimes}L^c_2(t^{-\theta}; \ell_2)
	\widehat{\otimes} L^c_2(u^{-\frac{1}{2}}), \nonumber
	\end{align}
which is a 4-term sum of vector valued function space with 3 variables $(s,t,u) \in \mathbb{R}^3_+$.
It is worth of mention that we can observe a nontrivial change of density between \eqref{density2} and \eqref{changed-density}.

\begin{thm}\label{thm-Sp-Embedding}
Let $1 < p <2$, $\frac{1}{p} + \frac{1}{p'}= 1$ and $\theta = \frac{2}{p'}$. Then we have the following cb-embedding
$$C_p \otimes_h R_p \rightarrow \K_{S_p},\; e_{i1} \otimes e_{1j} \mapsto ({\bf 1} \otimes e_i)\otimes ({\bf 1} \otimes e_j) \otimes {\bf 1}.$$
More precisely, for any $m \in \n$ we have
\begin{align*}
\lefteqn{\norm{\sum^n_{i,j=1}x_{ij}\otimes e_{i1} \otimes e_{1j}}_{M_m(C_p \otimes_h R_p)}}\\
& \sim \theta(1-\theta)\norm{\sum^n_{i,j=1}x_{ij}\otimes ({\bf 1} \otimes e_i) \otimes ({\bf 1} \otimes e_j) \otimes {\bf 1}}_{M_m(\K_{S_p})}.
\end{align*}
Moreover, $\K_{S_p}$ is completely complemented in the noncommutative $L_1$ space with respect to a von Neumann algebra with $QWEP$.
\end{thm}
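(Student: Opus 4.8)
The plan is to construct $\K_{S_p}$ as the concrete $L_1$-realization of the chain of embeddings $S_p = C_p \otimes_h R_p \hookrightarrow K_{c,\theta}\otimes_h K_{r,\theta} \cong \K_{1,2}(\psi^{-1})$ and then to transport the explicit description \eqref{density1} of $\K_{1,2}(\psi)$ across a change of density. First I would apply Proposition \ref{prop-Rp-C-OH-embedding} to each Haagerup leg: it embeds $C_p$ into $K_{c,\theta}$ and $R_p$ into $K_{r,\theta}$ with cb-constant $\sim \theta^{-\frac{1}{2}}(1-\theta)^{-\frac{1}{2}}$ apiece. Since the Haagerup tensor product is functorial and multiplicative in the cb-norm, the constant-function map $e_{i1}\otimes e_{1j}\mapsto(\mathbf{1}\otimes e_i)\otimes(\mathbf{1}\otimes e_j)$ realizes \eqref{density2} with overall cb-constant $\sim\theta^{-1}(1-\theta)^{-1}$. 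This already accounts for the factor $\theta(1-\theta)$ in the statement, provided that every subsequent identification is cb with a $\theta$-independent constant.

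Next I would feed in the concrete side. By \eqref{density3}, together with the change-of-density identities recorded just before the theorem, the product $K_{c,\theta}\otimes_h K_{r,\theta}$ is, after identifying its two legs (one of row-and-$OH$ type, the other of column-and-$OH$ type), an instance of $\K_{1,2}$. Its $L_1$-realization is governed by the Rosenthal-type splitting of Proposition \ref{prop-Rosenthal} into an $L_1$-term, an $L^r_1$-term and an $L^c_1$-term, while the residual $OH$ is embedded once and for all through the generalized circular elements of Proposition \ref{prop-OH-Embedding}. The essential point is that the single $OH$-embedding $v_{\ku_n}$ is shared by both Haagerup legs, so it contributes only one new variable $u$, carrying the row/column profile $L^r_2(u^{\frac{1}{2}})+L^c_2(u^{-\frac{1}{2}})$; this is why \eqref{changed-density} carries a single $u$-variable rather than two. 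Reading off \eqref{density1} then produces a four-term sum whose three tensor slots are, respectively, the two heavy row/column factors and the $u$-factor.

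The delicate step, and the main obstacle, is the change of density relating \eqref{density1} to \eqref{changed-density}. The realization \eqref{density1} is organized around the weight $\lambda^{-2}$, whereas $\K_{S_p}$ carries the $\theta$-weights $s^{-\theta},t^{-\theta}$ on the heavy factors and the complementary weights $t^{2-\theta},s^{2-\theta}$ on the light ones. I would track this slot by slot, exactly as in the computation of Proposition \ref{prop-Rp-Embedding5}, using the modular relation \eqref{modular-condition}, namely $D_{\Phi}^{\frac{1}{2}}g_n D_{\Phi}^{\frac{1}{2}} = \lambda_n g_n D_{\Phi}$, to absorb each weight change into a diagonal multiplication; matching the four resulting summands term by term against \eqref{changed-density} forces the exponents $-\theta$ and $2-\theta$, the jump by $2$ being the visible signature of the reweighting. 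Because bounded maps between column (resp. row) Hilbert spaces are automatically completely bounded with cb-norm equal to the operator norm, each reweighting is a cb-isomorphism contributing only a $\theta$-independent constant, so the estimate of the first paragraph survives. Verifying that all implied constants stay independent of $n$ and $\delta$ is the bookkeeping heart of the argument.

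For the final assertion I would assemble the complemented pieces already in hand: $OH$ is completely complemented in $L_1(\M_{\n})$ by Proposition \ref{prop-OH-Embedding}, and $\K^1_{RC_1}(\psi_n\otimes\phi)$ is completely complemented in $L_1(*^{\ku_n}_{j=1}(\As'\oplus\As'))$ with $n$-independent constants by Theorem \ref{thm-Rp-Embedding4}. Tensoring these projections and passing to the ultraproduct $\Bs=\big(\prod_{n,\U}(*^{\ku_n}_{j=1}(\As'\oplus\As'))_*\big)^*$, as in the discussion following Proposition \ref{prop-Rp-Embedding5}, exhibits $\K_{1,2}(\psi)$, hence its cb-isomorph $\K_{S_p}$, as a completely complemented subspace of $L_1(\Bs)$. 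Since $QWEP$ is stable under tensor products, free products and ultraproducts, $\Bs$ has $QWEP$, which completes the proof.
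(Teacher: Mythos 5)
Your overall strategy matches the paper's: Proposition \ref{prop-Rp-C-OH-embedding} on each Haagerup leg accounts for the factor $\theta(1-\theta)$ (the Haagerup tensor product being injective and multiplicative on cb-norms), and \eqref{density3} together with \eqref{density1} supplies the concrete three/four-term realization. However, there is a genuine gap at the point you yourself flag as ``bookkeeping'': \eqref{density1}, Proposition \ref{prop-Rp-Embedding2}, Proposition \ref{prop-Rp-Embedding5} and Theorem \ref{thm-Rp-Embedding4} are all statements about \emph{discrete} weighted sequence spaces (weights $\lambda$, $\delta^{\pm k/2}$, finite free products indexed by $\ku_n$), whereas $K_{c,\theta}\otimes_h K_{r,\theta}$ and $\K_{S_p}$ are built from the \emph{continuous} spaces $L_2(t^{\alpha})$. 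You invoke $\delta$-independence of constants without ever saying where $\delta$ enters your argument. The paper's proof is organized precisely around this bridge: the splitting maps $\Phi_{\delta,\alpha}$ and $\Psi_{\delta,\alpha}$ with $\Psi_{\delta,\alpha}\circ\Phi_{\delta,\alpha}=I$ and norms uniform for $\alpha\in\{-\theta,1-\theta,2-\theta\}$, followed by an approximation step (choosing $\delta-1$ small so that a given element of $M_m(K_{c,\theta}\otimes_h K_{r,\theta})$ is close to the range of $\Phi_\delta\otimes\Phi_\delta$) before the discrete machinery can be applied. Without constructing these maps and checking they are simultaneously cb for all the weights involved, the chain of equivalences in your second and third paragraphs does not connect to anything proved earlier in the paper.

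The same gap undermines your complementation argument. Your ultraproduct $\Bs=\bigl(\prod_{n,\U}(*^{\ku_n}_{j=1}(\As'\oplus\As'))_*\bigr)^*$ complements the discrete object $\K_{1,2}(\psi)$, and you then assert that $\K_{S_p}$ is ``its cb-isomorph.'' That assertion is exactly what is missing: for a fixed discretization parameter the image $E_\delta$ of the discrete model is a proper complemented subspace of $\K_{S_p}$, and $\K_{S_p}$ is only recovered as $\overline{\cup_{1<\delta\leq 2}E_\delta}$. The paper therefore needs a \emph{second} ultraproduct, over $\delta\in(1,2]$ along the ultrafilter $\U'$, producing $\Cs=\bigl(\prod_{\delta,\U'}L_1(N_\delta)\bigr)^*$ and the complemented copy $\prod_{\delta,\U'}D_\delta$; the uniformity in $\delta$ of the complementation constants (not just in $n$) is what makes this work. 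Your plan, as written, proves complementation of the wrong space.
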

\begin{proof}
For $1<\delta \leq 2$ and $\alpha \in \mathbb{R}$ we consider the following maps 
	$$\Phi_{\delta, \alpha} : \ell_2(\delta^{\alpha k}) \rightarrow L_2(t^\alpha), \;
	(x_k)_{k\in \z} \mapsto (\log \delta)^{-\frac{1}{2}}\sum_{k\in \z}x_k{\bf 1}_{[\delta^k, \delta^{k+1})}(t)$$
and 
	$$\Psi_{\delta, \alpha} : L_2(t^\alpha) \rightarrow \ell_2(\delta^{\alpha k}), \;
	f \mapsto \Big((\log \delta)^{-\frac{1}{2}}\int^{\delta^{k+1}}_{\delta^k} f(t) \frac{dt}{t}\Big)_{k\in \z}.$$
Then we have $\Psi_{\delta, \alpha} \circ \Phi_{\delta, \alpha} = I_{\ell_2(\delta^{\alpha k})}$ and
	$$\norm{\Phi_{\delta, \alpha}} \leq \max(1,\delta^\alpha)\; \text{and}\; \norm{\Psi_{\delta, \alpha}} \leq \max(1,\delta^{-\alpha}).$$
Note that $\Phi_{\delta, \alpha}$ (resp. $\Psi_{\delta, \alpha}$) is uniformly bounded for $-1 < \alpha < 2$
(In particular, for $\alpha \in \{ -\theta, (1-\theta), (2-\theta) \}$),
and it is actually the same map regardless of $\alpha$, so that we just denote by $\Phi_{\delta}$ and $\Psi_{\delta}$.

Now we fix $m \in \n$ and $x \in M_m(C_p \otimes_h R_p)$. Since $\cup_{1< \delta \leq 2}\{\text{ran}\Phi_{\delta, \alpha}\}$
is dense in $L_2(t^\alpha)$ we can choose $1< \delta \leq 2$ with $\delta -1$ small enough so that there is 
$$y = I_{M_m} \otimes \Big[(\Phi_\delta \otimes I_{\ell_2})\otimes (\Phi_\delta \otimes I_{\ell_2})\Big](z)
\in M_m(K_{c, \theta}\otimes_h K_{r, \theta})$$ with very small
$$\norm{\sum^n_{i,j=1}x_{ij}\otimes ({\bf 1} \otimes e_i)\otimes ({\bf 1} \otimes e_j) - y}_{M_m(K_{c, \theta}\otimes_h K_{r,\theta})}$$ and
$$\norm{\sum^n_{i,j=1}x_{ij}\otimes ({\bf 1} \otimes e_i)\otimes ({\bf 1} \otimes e_j) \otimes {\bf 1} - y \otimes {\bf 1}}_{M_m(\K_{S_p})},$$
where $z \in M_m(B_\delta)$ and $$B_\delta = \Big(\ell^r_2(\delta^{-\theta k}; \ell_2) + \ell^{oh}_2(\delta^{(1-\theta)k}; \ell_2)\Big) 
\otimes_h \Big( \ell^c_2(\delta^{-\theta k}; \ell_2) + \ell^{oh}_2(\delta^{(1-\theta)k}; \ell_2)\Big).$$
By applying \eqref{density1} (in this case $(\delta^{2k})_{k\in \z}$ is the weight) and \eqref{density3} to $B_\delta$ we get the following cb-embedding with constant independent of $\delta$. 
\begin{align*}
B_\delta & \hookrightarrow C_{\delta} = \ell^r_2(\delta^{-\theta k}; \ell_2) \widehat{\otimes}\, \ell^c_2(\delta^{-\theta k}; \ell_2) \widehat{\otimes} \Big(\ell^r_2(\delta^{\frac{k}{2}}) + \ell^c_2(\delta^{-\frac{k}{2}})\Big)\\& \;\;\;\;\;\;\;\;\;\;\;\;\;
+ \ell^r_2(\delta^{-\theta k}; \ell_2) \widehat{\otimes}\, \ell^r_2(\delta^{(2-\theta) k}; \ell_2) \widehat{\otimes}\, \ell^r_2(\delta^{\frac{k}{2}})
\\ & \;\;\;\;\;\;\;\;\;\;\;\;\; + \ell^c_2(\delta^{(2-\theta)k}; \ell_2)\widehat{\otimes}\, \ell^c_2(\delta^{-\theta k}; \ell_2) \widehat{\otimes} \, \ell^c_2(\delta^{-\frac{k}{2}}),\\ w & \mapsto w \otimes \sum_{k\in \z}e_k.
\end{align*}
Note that ${\bf 1} = \Phi_{\delta}(\sum_{k\in \z}e_k)$ and
$$(\Phi_\delta \otimes I_{\ell_2})\otimes (\Phi_\delta \otimes I_{\ell_2})\otimes \Phi_\delta : C_\delta \rightarrow \K_{S_p}$$ and
$$(\Psi_\delta \otimes I_{\ell_2})\otimes (\Psi_\delta \otimes I_{\ell_2}) : K_{c, \theta}\otimes_h K_{r, \theta} \rightarrow B_\delta$$
are cb-maps with uniformly bounded cb-norms, so by Proposition \ref{prop-Rp-C-OH-embedding} we have
\begin{align*}
\lefteqn{\theta^{-1}(1-\theta)^{-1}\norm{\sum^n_{i,j=1}x_{ij}\otimes e_{i1} \otimes e_{1j}}_{M_m(C_p \otimes_h R_p)} } & \\
& \sim \norm{\sum^n_{i,j=1}x_{ij}\otimes ({\bf 1} \otimes e_i) \otimes ({\bf 1} \otimes e_j)}_{M_m(K_{c, \theta}\otimes_h K_{r, \theta})}\\ & \sim \norm{y}_{M_m(K_{c, \theta}\otimes_h K_{r, \theta})} \sim \norm{z}_{M_m(B_\delta)} \sim \norm{z \otimes \sum_{k\in \z}e_k}_{M_m(C_{\delta})}\\
& \sim \norm{y \otimes {\bf 1}}_{M_m(\K_{S_p})}
\sim \norm{\sum^n_{i,j=1}x_{ij}\otimes ({\bf 1} \otimes e_i) \otimes ({\bf 1} \otimes e_j) \otimes {\bf 1}}_{M_m(\K_{S_p})}.
\end{align*}
Note that all equivalences above are independent of the choice of $\delta$.

Moreover, for any $1< \delta \leq 2$
$$E_\delta = \Big[(\Phi_{\delta} \otimes I_{\ell_2})\otimes (\Phi_{\delta} \otimes I_{\ell_2})\otimes \Phi_{\delta}\Big](C_\delta) \cong C_\delta$$
completely isometrically and by Proposition \ref{prop-Rp-Embedding5} and the following argument we have a cb-embedding
$$C_\delta \hookrightarrow D_\delta \subseteq L_1(N_\delta),$$
where $N_\delta$ satisfies QWEP and $D_\delta$ is completely complemented in $L_1(N_\delta)$ with constants independent of $\delta$.

Let $\U'$ be a free ultrafilter on the collection of subsets of $(1,2]$ containing all $(1,\delta]$ for $1< \delta \leq 2$. Then we have
$$\K_{S_p} = \overline{\cup_{1<\delta\leq 2}E_\delta} \hookrightarrow \prod_{\delta, \U'}D_\delta 
\subseteq L_1(\Cs),\;\text{with}\; \Cs = \Big(\prod_{\delta, \U'}L_1(N_\delta)\Big)^*.$$
By the stability of $QWEP$ with respect to free product, tensor product and ultraproduct $\Cs$ also satisfies $QWEP$. Moreover, since each
$D_\delta$ is cb-complemented in $L_1(N_\delta)$ with uniformly bounded cb-norms $\prod_{\delta, \U'}D_\delta$
is also cb-complemented in $L_1(\Cs)$.

\end{proof}

By combining the above two embeddings for $OH$ and $S_p$ we get an embedding of $\Pi^o_1(OH_n, S^n_p)$ to the following space
$\K_{\Pi^o_1(OH_n, S^n_p)}$, which is a 8-term sum of vector valued function space with 4 variables $(s,t,u,v) \in \mathbb{R}^4_+$!
Let $\K_{S^n_p}$ be the space $\K_{S_p}$ using $\ell^n_2$ instead of $\ell_2$. Then we define
\begin{align*}
\K_{\Pi^o_1(OH_n, S^n_p)} & = \K_{S^n_p}\widehat{\otimes}(L^c_2(v^{-\frac{1}{2}}; \ell^n_2) +_2 L^r_2(v^{\frac{1}{2}}; \ell^n_2))\\
& =  L^c_2(s^{2-\theta}; \ell^n_2)\widehat{\otimes}L^c_2(t^{-\theta}; \ell^n_2)
\widehat{\otimes} L^c_2(u^{-\frac{1}{2}})\widehat{\otimes}L^c_2(v^{-\frac{1}{2}}; \ell^n_2)\\
&  \;\;\;\;+ L^r_2(s^{-\theta}; \ell^n_2)\widehat{\otimes}L^r_2(t^{2-\theta}; \ell^n_2)
\widehat{\otimes} L^r_2(u^{\frac{1}{2}})\widehat{\otimes}L^r_2(v^{\frac{1}{2}}; \ell^n_2)\\
&  \;\;\;\;+ L^c_2(s^{2-\theta}; \ell^n_2)\widehat{\otimes}L^c_2(t^{-\theta}; \ell^n_2)
\widehat{\otimes} L^c_2(u^{-\frac{1}{2}})\widehat{\otimes}L^r_2(v^{\frac{1}{2}}; \ell^n_2)\\
&  \;\;\;\;+ L^r_2(s^{-\theta}; \ell^n_2)\widehat{\otimes}L^r_2(t^{2-\theta}; \ell^n_2)
\widehat{\otimes} L^r_2(u^{\frac{1}{2}})\widehat{\otimes}L^c_2(v^{-\frac{1}{2}}; \ell^n_2)\\
&  \;\;\;\;+ L^r_2(s^{-\theta}; \ell^n_2)\widehat{\otimes}L^c_2(t^{-\theta}; \ell^n_2)
\widehat{\otimes} L^c_2(u^{-\frac{1}{2}})\widehat{\otimes} L^c_2(v^{-\frac{1}{2}}; \ell^n_2)\\
&  \;\;\;\;+ L^r_2(s^{-\theta}; \ell^n_2)\widehat{\otimes}L^c_2(t^{-\theta}; \ell^n_2)
\widehat{\otimes}L^c_2(u^{-\frac{1}{2}})\widehat{\otimes} L^r_2(v^{\frac{1}{2}}; \ell^n_2)\\
&  \;\;\;\;+ L^r_2(s^{-\theta}; \ell^n_2)\widehat{\otimes}L^c_2(t^{-\theta}; \ell^n_2)
\widehat{\otimes} L^r_2(u^{\frac{1}{2}})\widehat{\otimes} L^r_2(v^{\frac{1}{2}}; \ell^n_2)\\
&  \;\;\;\;+ L^r_2(s^{-\theta}; \ell^n_2)\widehat{\otimes}L^c_2(t^{-\theta}; \ell^n_2)
\widehat{\otimes}L^r_2(u^{\frac{1}{2}})\widehat{\otimes} L^c_2(v^{-\frac{1}{2}}; \ell^n_2).
\end{align*}

\begin{cor}\label{cor-SptimesOH-Embedding}
Let $1 < p <2$, $\frac{1}{p} + \frac{1}{p'}= 1$ and $\theta = \frac{2}{p'}$. Then we have
the following cb-embedding with constants independent of $n$.
$$\Pi^o_1(OH_n, S^n_p) \rightarrow \K_{\Pi^o_1(OH_n, S^n_p)}, \,\, T_{e_k \otimes e_{ij}} \mapsto
({\bf 1} \otimes e_i)\otimes ({\bf 1} \otimes e_j) \otimes ({\bf 1} \otimes e_k) \otimes {\bf 1}.$$
Moreover, for $a = \sum^n_{i,j,k=1}a_{i,j,k} e_k  \otimes e_{ij} \in OH_n\otimes S^n_p$ we have
\begin{align*}
\pi^o_1(T_a) \sim \theta(1-\theta)\norm{{\bf 1}\otimes a}_{\K_{\Pi^o_1(OH_n, S^n_p)}}.
\end{align*}
\end{cor}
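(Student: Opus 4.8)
The plan is to assemble the corollary from the two embeddings already in hand, using the duality principle recorded at the end of Section \ref{sec-dualProb}. That principle (which rests on Lemmas 4.4 and 4.5 of \cite{J05}) is phrased there for $\ell_p$, but its derivation uses only that $OH_n$ and the target space embed cb-complementably into the predual of a $QWEP$ von Neumann algebra; the same argument applies verbatim with $S^n_p$ in place of $\ell^n_p$. Concretely, I would take $i = v^\delta_n : OH_n \hookrightarrow G^n_* \subseteq L_1(\M_{\n})$ from Proposition \ref{prop-OH-Embedding} (with the fixed value $\delta = 2$) and $j : S^n_p \hookrightarrow \K_{S_p} \subseteq L_1(\Cs)$ from Theorem \ref{thm-Sp-Embedding}, both cb-complemented with constants independent of $n$, and first obtain
\[ \pi^o_1(T_a : OH_n \rightarrow S^n_p) \sim \norm{(i \otimes j)(a)}_{L_1(\M_{\n}) \widehat{\otimes} L_1(\Cs)} \]
for every $a \in OH_n \otimes S^n_p$.

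Next I would transfer this projective-tensor norm from the ambient $L_1 \widehat{\otimes} L_1$ onto the concrete subspace $G^n_* \widehat{\otimes} \K_{S_p}$. Since $G^n_*$ is $2$-completely complemented in $L_1(\M_{\n})$ and $\K_{S_p}$ is completely complemented in $L_1(\Cs)$ with constants independent of $n$, the tensor product $P \otimes Q$ of the two cb-projections is a cb-projection of $L_1(\M_{\n}) \widehat{\otimes} L_1(\Cs)$ onto $G^n_* \widehat{\otimes} \K_{S_p}$, so the two norms of $(i \otimes j)(a)$ are equivalent with constants independent of $n$. I would then rewrite $G^n_* \widehat{\otimes} \K_{S^n_p}$ as the advertised eight-term space. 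Here I invoke the cb-isomorphism $G^n_* \cong L^c_2(v^{-\frac{1}{2}}; \ell^n_2) +_2 L^r_2(v^{\frac{1}{2}}; \ell^n_2)$ from Proposition \ref{prop-OH-Embedding} (relabelling the dummy variable as $v$) together with the distributivity of the projective tensor product over the sum, $(E_0 + E_1) \widehat{\otimes} (F_0 + F_1) \cong (E_0 \widehat{\otimes} F_0) + (E_1 \widehat{\otimes} F_1)$, recalled in Section \ref{sec-prelim}. Applying this to the four summands of $\K_{S^n_p}$ against the two summands of the $OH_n$-factor produces exactly the eight summands of $\K_{\Pi^o_1(OH_n, S^n_p)}$, and tracking the images of the generators shows that $T_{e_k \otimes e_{ij}}$ is sent to $({\bf 1} \otimes e_i) \otimes ({\bf 1} \otimes e_j) \otimes ({\bf 1} \otimes e_k) \otimes {\bf 1}$, the vectors $e_i, e_j, e_k$ occupying the $s$-, $t$- and $v$-coordinates while the remaining $u$-coordinate (inherited from the $OH$-factor used inside the $S_p$-construction) carries the scalar constant function ${\bf 1}$.

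Finally I would account for the constants. The $OH_n$-embedding contributes only a universal factor, whereas the discrepancy between the bare identification $e_{ij} \mapsto ({\bf 1} \otimes e_i) \otimes ({\bf 1} \otimes e_j) \otimes {\bf 1}$ used to define $\K_{\Pi^o_1(OH_n, S^n_p)}$ and the genuine $S^n_p$-norm is precisely the factor $\theta(1-\theta)$ quantified in Theorem \ref{thm-Sp-Embedding}; since this factor lives entirely on the $S_p$-tensor leg, it survives tensoring against the $OH_n$-factor with the same order of magnitude. Feeding this into the chain of equivalences yields
\[ \pi^o_1(T_a) \sim \theta(1-\theta) \norm{{\bf 1} \otimes a}_{\K_{\Pi^o_1(OH_n, S^n_p)}}, \]
as claimed.

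The main obstacle I anticipate is not a single deep step but the bookkeeping in the tensor-product expansion: one must keep straight the four function variables $(s,t,u,v)$, the three $\ell^n_2$ indices, the weights $s^{2-\theta}, t^{-\theta}, u^{\pm\frac{1}{2}}, v^{\pm\frac{1}{2}}$, and the row/column assignments, and verify that the free $u$-variable stays scalar while the genuine $OH_n$-factor supplies the indexed $v$-variable. One also has to confirm that the complementation constants -- in particular the one hidden in Theorem \ref{thm-Sp-Embedding} -- remain independent of $n$ and contribute no $\theta$-dependence beyond the stated $\theta(1-\theta)$ after passing to the projective tensor product.
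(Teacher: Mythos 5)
Your proposal is correct and follows essentially the same route as the paper, which states this corollary without a separate proof as the immediate consequence of combining Proposition \ref{prop-OH-Embedding} and Theorem \ref{thm-Sp-Embedding} via the complemented-embedding principle of Section \ref{sec-dualProb} and the distributivity $(E_0+E_1)\widehat{\otimes}(F_0+F_1)\cong(E_0\widehat{\otimes}F_0)+(E_1\widehat{\otimes}F_1)$. Your bookkeeping of the eight summands, the scalar $u$-variable versus the indexed $v$-variable, and the $\theta(1-\theta)$ factor coming entirely from the $S_p$-leg all match the paper's intended argument.
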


\section{A result for the identity}\label{sec-identity}

In this section we calculate the $\K_{\Pi^o_1(OH_n, S^n_p)}$-norm of ${\bf 1} \otimes \sum^n_{i=1}e_i \otimes e_i \otimes \delta_i$
which corresponds to the formal identity map $I_n : OH_n \rightarrow \ell^n_p$. First, we rearrange
$\K_{\Pi^o_1(OH_n, S^n_p)}$ as follows.
\begin{align*}
\lefteqn{\K_{\Pi^o_1(OH_n, S^n_p)}}\\
& = L^c_2(s^{2-\theta}t^{-\theta}u^{-\frac{1}{2}}v^{-\frac{1}{2}}; \ell^n_2 \otimes_2 \ell^n_2 \otimes_2 \ell^n_2)
\;\;\;\;\;\; + L^r_2(s^{-\theta}t^{2-\theta}u^{\frac{1}{2}}v^{\frac{1}{2}}; \ell^n_2\otimes_2 \ell^n_2\otimes_2 \ell^n_2)\\
&  \;\;\;+ L^c_2(s^{2-\theta}t^{-\theta}u^{-\frac{1}{2}}; \ell^n_2\otimes_2 \ell^n_2)
\widehat{\otimes} L^r_2(v^{\frac{1}{2}}; \ell^n_2)
\,+ L^r_2(s^{-\theta}t^{2-\theta}u^{\frac{1}{2}}; \ell^n_2\otimes_2 \ell^n_2) \widehat{\otimes}L^c_2(v^{-\frac{1}{2}}; \ell^n_2)\\
&  \;\;\;+ L^r_2(s^{-\theta}; \ell^n_2)\widehat{\otimes} L^c_2(t^{-\theta}u^{-\frac{1}{2}}v^{-\frac{1}{2}}; \ell^n_2\otimes_2 \ell^n_2)
+ L^r_2(s^{-\theta}v^{\frac{1}{2}}; \ell^n_2 \otimes_2 \ell^n_2)\widehat{\otimes}L^c_2(t^{-\theta}u^{-\frac{1}{2}}; \ell^n_2)\\
&  \;\;\;+ L^r_2(s^{-\theta}u^{\frac{1}{2}}v^{\frac{1}{2}}; \ell^n_2 \otimes_2 \ell^n_2)\widehat{\otimes}L^c_2(t^{-\theta}; \ell^n_2)
\;\;\;\;\, + L^r_2(s^{-\theta}u^{\frac{1}{2}}; \ell^n_2)\widehat{\otimes}L^c_2(t^{-\theta}v^{-\frac{1}{2}}; \ell^n_2 \otimes_2 \ell^n_2)\\
& = F_1 + F_2 + \cdots + F_8.
\end{align*}
Let $\mu_1$, $\mu_2$ be the measures $$d\mu_1(s,t,u,v) = s^{4-2\theta}t^{-2\theta}u^{-1}v^{-1} \frac{dsdtdudv}{stuv}
\;\text{and}\; d\mu_2(s,t,u,v) = s^{-2\theta}t^{4-2\theta}uv \frac{dsdtdudv}{stuv}$$ corresponding to $F_1$ and $F_2$.
We also let $\mu_{3,1}$ and $\mu_{3,2}$ be the measures $$d\mu_{3,1}(s,t,u) = s^{4-2\theta}t^{-2\theta}u^{-1} \frac{dsdtdu}{stu}\;
\text{and}\; d\mu_{3,2}(v) = v \frac{dv}{v}$$ corresponding to $F_3$, and we
define $\mu_{k,l}$ for $4\leq k \leq 8$ and $l = 1,2$ similarly.

If we look at the Banach space level of $F_l$ it is easier to understand.
For example, we have $$F_1 \cong L_2(\mu_1;\ell^n_2\otimes_2 \ell^n_2\otimes_2 \ell^n_2)$$ and
$$F_3 \cong L_2(\mu_{3,1}; \ell^n_2\otimes_2 \ell^n_2)\otimes_{\pi}L_2(\mu_{3,2}; \ell^n_2)$$ isometrically,
where $\otimes_{\pi}$ implies the projective tensor product in the Banach space category.

In the case of identity we can make the calculation depend only on the decomposition of constant $1$ function by scalar-valued functions.
This will be proved in the following section.
\begin{lem}\label{lem-equi-identity}
\begin{align*}
\lefteqn{\norm{{\bf 1} \otimes \sum^n_{i=1}e_i \otimes e_i \otimes \delta_i}_{\K_{\Pi^o_1(OH_n, S^n_p)}}}\\
& \sim \inf_{{\bf 1} = f_1 + \cdots + f_8} n^{\frac{1}{2}}\norm{f_1}_{L_2(\mu_1)} + n^{\frac{1}{2}}\norm{f_2}_{L_2(\mu_2)}
+ n\norm{f_3}_{L_2(\mu_{3,1})\otimes_{\pi} L_2(\mu_{3,2})} \\& \;\;\;\;\;\;\;\;\;\;\;\;\;\;\;\;\;\;\;\;\;
+ \cdots + n\norm{f_8}_{L_2(\mu_{8,1})\otimes_{\pi} L_2(\mu_{8,2})}.
\end{align*}

\end{lem}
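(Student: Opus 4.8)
The plan is to compute both sides directly from the defining infimal–convolution norm of the eight–term sum ($+ = +_1$):
\[
\Big\| {\bf 1}\otimes\sum_{i=1}^n e_i\otimes e_i\otimes\delta_i\Big\|_{\K_{\Pi^o_1(OH_n, S^n_p)}}
= \inf\Big\{\sum_{l=1}^8\|X_l\|_{F_l}\; :\; \sum_{l=1}^8 X_l = {\bf 1}\otimes\sum_{i=1}^n e_i\otimes e_i\otimes\delta_i\Big\}.
\]
Write $\xi = \sum_i e_i\otimes e_i\otimes\delta_i \in \ell^n_2\otimes_2\ell^n_2\otimes_2\ell^n_2$ for the diagonal vector carrying the three $\ell^n_2$–indices (there is no $\ell^n_2$ in the $u$–variable). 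The whole point is that $\xi$ is diagonal, which lets one replace vector–valued decompositions by scalar ones. I will match the scalar infimum by producing explicit decompositions in both directions, using throughout the identifications $F_1,F_2\cong L_2^{c/r}$ (Hilbertian, with $\|\xi\|_2 = n^{1/2}$) and, for $3\le l\le 8$, $F_l = L_2^c\,\widehat{\otimes}\,L_2^r \cong S_1$ (trace class, since $C\widehat{\otimes}R\cong R\widehat{\otimes}C\cong S_1$ isometrically).

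\textbf{Upper bound.} Given a scalar decomposition ${\bf 1} = f_1+\cdots+f_8$, I set $X_l = f_l\otimes\xi$, a legitimate decomposition of ${\bf 1}\otimes\xi$, and estimate termwise. For $l=1,2$ the space $F_l$ is Hilbertian over $\ell^n_2\otimes_2\ell^n_2\otimes_2\ell^n_2$, so $\|f_l\otimes\xi\|_{F_l} = \|f_l\|_{L_2(\mu_l)}\,\|\xi\|_2 = n^{1/2}\|f_l\|_{L_2(\mu_l)}$. For $3\le l\le 8$, once the three indices of $\xi$ are split according to the column/row grouping of $F_l$, the element $f_l\otimes\xi$ corresponds under $C\widehat{\otimes}R\cong S_1$ to the orthogonal direct sum, over the diagonal index, of rank–one kernel operators; for an elementary tensor $f_l = g\otimes h$ this gives trace norm exactly $n\|g\|\,\|h\| = n\|f_l\|_{L_2(\mu_{l,1})\otimes_\pi L_2(\mu_{l,2})}$, and the triangle inequality over a representation of a general $f_l$ gives $\|f_l\otimes\xi\|_{F_l}\le n\|f_l\|_{L_2(\mu_{l,1})\otimes_\pi L_2(\mu_{l,2})}$. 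Summing and taking the infimum over scalar decompositions yields ``$\le$''.

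\textbf{Lower bound (symmetrisation).} The key device is to average an arbitrary decomposition onto the diagonal. Let $\mathbb{T}$ be the unit circle and, for $w=(w_i),z=(z_i)\in\mathbb{T}^n$, let $U_{w,z}$ act on $\ell^n_2\otimes\ell^n_2\otimes\ell^n_2$ by the diagonal unitaries $\mathrm{diag}(w_i)\otimes\mathrm{diag}(z_i)\otimes\mathrm{diag}(\overline{w_iz_i})$; put $\Pi = \int_{\mathbb{T}^n\times\mathbb{T}^n}U_{w,z}(\cdot)\,dw\,dz$. A direct computation gives $U_{w,z}\xi = \xi$, while integrating $\int w_a\overline{w_c}\,dw = \delta_{ac}$ and $\int z_b\overline{z_c}\,dz = \delta_{bc}$ shows that $\Pi$ is exactly the orthogonal projection onto $\mathrm{span}\{e_i\otimes e_i\otimes\delta_i\}$. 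Since $U_{w,z}$ is implemented by diagonal unitaries on the $\ell^n_2$–factors only, it is (completely) isometric on each $F_l$ — on $F_{1},F_2$ it is unitary, on $F_l\cong S_1$ it is a trace–norm conjugation — so $\Pi$ contracts every $F_l$–norm and fixes the function variables. Applying $\Pi$ to $\sum_l X_l = {\bf 1}\otimes\xi$ I may therefore assume each $X_l$ is diagonal, $X_l = \sum_i g^l_i\otimes e_i\otimes e_i\otimes\delta_i$ with $\sum_l g^l_i = {\bf 1}$ for every $i$, at the cost of only decreasing $\sum_l\|X_l\|_{F_l}$. Setting $f_l = \tfrac1n\sum_i g^l_i$ gives $\sum_l f_l = {\bf 1}$. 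For $l=1,2$, Cauchy–Schwarz yields $n^{1/2}\|f_l\|_{L_2(\mu_l)} = n^{-1/2}\big\|\sum_i g^l_i\big\|\le\big(\sum_i\|g^l_i\|^2\big)^{1/2} = \|X_l\|_{F_l}$; for $3\le l\le8$, writing $g^l_i$ as the kernel operator $G^l_i$, the diagonal $X_l$ is the orthogonal direct sum $\bigoplus_i G^l_i$, so $\|X_l\|_{F_l} = \sum_i\|G^l_i\|_{S_1}\ge\big\|\sum_i G^l_i\big\|_{S_1} = n\|f_l\|_{L_2(\mu_{l,1})\otimes_\pi L_2(\mu_{l,2})}$ by the triangle inequality for the trace norm. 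Summing shows the scalar infimum is $\le\sum_l\|X_l\|_{F_l}$, and taking the infimum over all decompositions gives ``$\ge$''.

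\textbf{Main obstacle.} The difficulties are structural rather than computational. The crux is to recognise each mixed block $F_l$ ($3\le l\le8$) as a trace class via $C\widehat{\otimes}R\cong S_1$ and to observe that on the diagonal the corresponding operator splits as an \emph{orthogonal} direct sum of kernel operators, so that its trace norm is \emph{exactly additive}; it is this additivity that turns the two–sided trace–norm triangle inequality into the precise matching of the weights $n$ and $n^{1/2}$. One must also carry out, uniformly across all eight summands, the bookkeeping of which of the three $\ell^n_2$–indices falls into the column block and which into the row block (and correspondingly which function variables carry $\mu_{l,1}$ versus $\mu_{l,2}$), and verify that conjugation by diagonal unitaries is genuinely a trace–norm isometry so that $\Pi$ contracts each $F_l$. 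Since all the identifications used are isometric, the two inequalities hold with absolute constants, giving the asserted ``$\sim$''.
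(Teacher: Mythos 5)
Your argument is correct and follows essentially the same route as the paper: compress onto the diagonal with a contractive projection, identify the mixed blocks $F_3,\dots,F_8$ with trace classes so that the norm of a diagonal element is exactly additive over the blocks, and then average over the diagonal index by convexity. The only (harmless) differences are that you establish contractivity of the diagonal projection by averaging over diagonal torus unitaries rather than by the paper's homogeneity-plus-factorization argument, and that you replace the paper's detour through the Orlicz function $\Psi$ and the permutation-group averaging of Lemma \ref{lem-recover-form-identity} by a direct Cauchy--Schwarz and triangle-inequality average.
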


Note that the above infimum is the norm of ${\bf 1}$ in the following function space. 
$$L_2(n\mu_1) + L_2(n\mu_2) + L_2(n\mu_{3,1})\otimes_{\pi} L_2(n\mu_{3,2}) + \cdots + L_2(n\mu_{8,1})\otimes_{\pi} L_2(n\mu_{8,2}).$$
Now we do the calculation for the identity.

\begin{thm}\label{thm-calculation-identity}
Let $1 < p <2$, $\frac{1}{p} + \frac{1}{p'}= 1$ and $\theta = \frac{2}{p'}$. Then
$$\norm{{\bf 1} \otimes \sum^n_{i=1}e_i \otimes e_i \otimes \delta_i}_{\K_{\Pi^o_1(OH_n, S^n_p)}} \sim \theta^{-1}(1-\theta)^{-\frac{3}{2}}n^{\frac{1}{p}}.$$
\end{thm}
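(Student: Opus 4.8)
The plan is to start from Lemma~\ref{lem-equi-identity}, which reduces the assertion to estimating the norm of the constant function $\mathbf{1}$ in the scalar $8$-term function space
\[ Z \;=\; L_2(n\mu_1) \,+\, L_2(n\mu_2) \,+\, \sum_{l=3}^{8}\big( L_2(n\mu_{l,1})\otimes_{\pi} L_2(n\mu_{l,2})\big), \]
so that it remains to prove $\norm{\mathbf{1}}_Z \sim \theta^{-1}(1-\theta)^{-\frac32}n^{\frac1p}$. I would establish the two inequalities separately, and in both directions the central device is that \emph{product} (elementary tensor) functions trivialise the projective tensor norms: for a box $A\times B$ one has $\norm{\mathbf{1}_{A\times B}}_{L_2(\mu_{l,1})\otimes_\pi L_2(\mu_{l,2})} = \norm{\mathbf{1}_A}_{L_2(\mu_{l,1})}\norm{\mathbf{1}_B}_{L_2(\mu_{l,2})}$, and every one-dimensional factor is an elementary integral $\int_a^b t^c\frac{dt}{t} = \frac{b^c-a^c}{c}$. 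Passing to logarithmic coordinates, the whole problem becomes a balancing of cut-points on the four axes $s,t,u,v$.

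For the upper bound I would cut each of the four axes at finitely many points, forming a grid, and assign every cell to the term $l$ whose weight makes it cheapest there; the associated $f_l$ is the sum of the indicators of the cells assigned to it. Subadditivity of the tensor norm together with the factorisation above reduces the cost to an explicit expression in the cut-points, each contribution carrying a normalising constant $\abs{c}^{-1}$ from the $1$-D integrals. Optimising the cut locations then yields $\norm{\mathbf{1}}_Z \lesssim \theta^{-1}(1-\theta)^{-\frac32}n^{\frac1p}$; here the exponent $\frac1p = 1-\frac{\theta}{2}$ arises from balancing the $n^{1/2}$-weighted terms $F_1,F_2$ against the $n$-weighted terms $F_3,\dots,F_8$, while the $\theta$- and $(1-\theta)$-powers are the accumulated one-dimensional normalisations at the critical exponents $4-2\theta,\ 2-\theta,\ -2\theta$ and $\pm\frac12$.

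For the lower bound I would dualise. With respect to the common base measure $\frac{ds\,dt\,du\,dv}{stuv}$ one has $Z^{*} = \bigcap_{l} X_l^{*}$ with $\norm{\phi}_{Z^{*}} = \max_l \norm{\phi}_{X_l^{*}}$, where $X_1 = L_2(n\mu_1)$, $X_2 = L_2(n\mu_2)$ and $X_l = L_2(n\mu_{l,1})\otimes_\pi L_2(n\mu_{l,2})$ for $l\geq 3$; in the last case $X_l^{*} = B(L_2(n\mu_{l,1}), L_2(n\mu_{l,2})^{*})$ with the operator norm. The crucial choice is the product test functional $\phi = \phi_s\otimes\phi_t\otimes\phi_u\otimes\phi_v$ built from interval indicators against the reciprocal weights: such a $\phi$ acts as a rank-one operator under \emph{every} grouping of the variables into two blocks, so that all the operator norms $\norm{\phi}_{X_l^{*}}$ collapse to products of one-dimensional $L_2$-norms, exactly the reciprocals of those appearing in the upper bound. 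I would then exhibit one family of intervals meeting all eight constraints $\norm{\phi}_{X_l^{*}}\leq 1$ simultaneously while keeping $\langle \mathbf{1}, \phi\rangle$ of order $\theta^{-1}(1-\theta)^{-\frac32}n^{\frac1p}$, giving the matching lower bound.

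The hard part is neither the factorisation nor the duality, both of which are formal, but the coupled four-variable balancing itself: the weights in the $\mu_{l,1}$ genuinely link the variables (e.g. $s,t,u$ in $\mu_{3,1}$ and $s,u,v$ in $\mu_{7,1}$), the two scales $n^{1/2}$ and $n$ pull the optimal cut-points in opposite directions, and a single grid must be near-optimal for all eight terms at once (respectively a single product functional must be admissible for all eight dual constraints at once). Disentangling which term governs each region of $\mathbb{R}^4_+$ is precisely the case-by-case analysis in the eight-term space, and it is there that the constant $\theta^{-1}(1-\theta)^{-\frac32}$ and the exponent $n^{1/p}$ must be read off; the consistency check $\theta(1-\theta)\cdot\theta^{-1}(1-\theta)^{-3/2} = (1-\theta)^{-1/2}$, which matches $(\tfrac{q}{q-2})^{1/2}$ in \eqref{funds} after Corollary~\ref{cor-SptimesOH-Embedding}, is a useful guard against arithmetic slips.
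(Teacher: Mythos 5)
Your starting point --- Lemma \ref{lem-equi-identity}, followed by the observation that indicators of product sets turn the projective tensor norms into products of one--dimensional $L_2$--norms --- is exactly the paper's, and your consistency check $\theta(1-\theta)\cdot\theta^{-1}(1-\theta)^{-3/2}=(1-\theta)^{-1/2}$ is correct. But your lower bound cannot work as described. Write $W_1,\dots,W_8$ for the eight weight functions in the integrand of the paper's lower estimate, so that the claim amounts to $\int_{\mathbb{R}^4_+}\min_l W_l\,\frac{ds\,dt\,du\,dv}{stuv}\sim\theta^{-2}(1-\theta)^{-3}n^{2-\theta}$. If you restrict the duality $\norm{\mathbf{1}}_Z=\sup_\phi\langle\mathbf{1},\phi\rangle/\max_l\norm{\phi}_{X_l^*}$ to product functionals $\phi=\phi_s\otimes\phi_t\otimes\phi_u\otimes\phi_v$, then every one of the eight dual norms factors into one--variable second moments such as $A_1=\int\phi_s^2s^{2\theta-4}\frac{ds}{s}$, $A_2=\int\phi_s^2s^{2\theta}\frac{ds}{s}$, and the sharp one--variable Cauchy--Schwarz bound gives $\big(\int\phi_s\frac{ds}{s}\big)^2\lesssim\theta^{-1}A_1^{\theta/2}A_2^{1-\theta/2}$ (similarly in $t$, and with bounded constants in $u,v$): no $(1-\theta)^{-1}$ singularity can be produced here. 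Maximising the resulting product of powers of the eight moments subject to the eight constraints (each of the form ``product of four moments $\leq n$ or $n^2$'') is, in logarithmic coordinates, a finite linear program whose value is a pure power $n^{\gamma(\theta)}$. Hence the best a single product functional can certify is of order $\theta^{-1}n^{1-\theta/2}$, and the factor $(1-\theta)^{-3/2}$ is irretrievably lost: the extremal density $\min_lW_l$ is genuinely non--product, and the three factors $(1-\theta)^{-1}$ arise from integrations whose limits couple the variables (for instance $t\geq u^{-1/2}v^{-1/2}s$ and $s<n^{1/4}u^{1/2}$ on the region the paper calls $A_{4,2}$). The paper avoids duality entirely: it uses the contraction $L_2(\nu)\otimes_\pi X\to L_2(\nu;X)$ to replace each projective summand by the $L_2$ of the product measure (which only decreases the sum--norm), collapses the eight weighted $L_2$--spaces into $L_2(\min_lW_l)$, and computes $\int\min_lW_l$ by an explicit partition into $24$ regions.

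Your upper bound has a related gap. The regions on which a given term is minimal are not unions of boundedly many boxes --- they are cut out by coupled inequalities such as $s\geq u^{1/2}$ or $t\geq n^{1/4}u^{-1/2}s$ --- and if you pass to a genuine grid you are forced to estimate $\norm{\sum_A\mathbf{1}_A}_{L_2\otimes_\pi L_2}$ by $\sum_A\sqrt{m(A)}$, which exceeds $\big(\sum_A m(A)\big)^{1/2}$ by geometric--sum factors that degenerate precisely as $\theta\to0$ or $\theta\to1$, again spoiling the sharp constant. The device the paper uses, and which you would need to import, is more delicate: each of the $24$ regions, though not a box, \emph{is} a product set with respect to exactly the two--block grouping of the variables demanded by the term it is assigned to, so its indicator is a single elementary tensor in the relevant $\otimes_\pi$ and the triangle inequality over $24$ summands costs only a universal constant. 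In short, your outline finds the correct reduction, but both halves of the estimate rest on devices (one product dual functional; a product grid) that cannot deliver $\theta^{-1}(1-\theta)^{-3/2}$, and the region--by--region computation that is the actual content of the theorem is not carried out.
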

\begin{proof}
First we consider the lower bound. Recall that the formal identity $$L_2(\nu) \otimes_{\pi} X \rightarrow L_2(\nu; X)$$
is a contraction for any measure $\nu$ and Banach spaces $X$ and $$L_2(f(t)dt) + L_2(g(t)dt) \cong L_2(\min\{f(t), g(t)\}dt)$$ isomorphically.
Then by Lemma \ref{lem-equi-identity} we have
\begin{align}
\lefteqn{\norm{{\bf 1} \otimes \sum^n_{i=1}e_i \otimes e_i \otimes \delta_i}^2_{\K_{\Pi^o_1(OH_n, S^n_p)}}} \nonumber \\ & \sim
\norm{{\bf 1}}^2_{L_2(n\mu_1) + L_2(n\mu_2) + L_2(n\mu_{3,1})\otimes_{\pi} L_2(n\mu_{3,2}) + \cdots + L_2(n\mu_{8,1})\otimes_{\pi} L_2(n\mu_{8,2})}
\label{FunctionSpace1} \\ 
& \geq \norm{{\bf 1}}^2_{L_2(n\mu_1) + L_2(n\mu_2) + L_2(n^2 \mu_{3,1}\times \mu_{3,2}) + \cdots + L_2(n^2\mu_{8,1}\times \mu_{8,2})}
\label{FunctionSpace2} \\
& \sim \int_{\mathbb{R}^4_+} \min(ns^{4-2\theta}t^{-2\theta}u^{-1}v^{-1}, ns^{-2\theta}t^{4-2\theta}uv,
n^2s^{4-2\theta}t^{-2\theta}u^{-1}v, n^2 s^{-2\theta}t^{4-2\theta}uv^{-1}, \nonumber \\ & \;\;\;\; n^2 s^{-2\theta}t^{-2\theta}u^{-1}v^{-1},
n^2 s^{-2\theta}t^{-2\theta}u^{-1}v, n^2 s^{-2\theta}t^{-2\theta}uv, n^2 s^{-2\theta}t^{-2\theta}uv^{-1} ) \frac{dsdtdudv}{stuv} \nonumber \\
& = \int_{\mathbb{R}^4_+} n^2s^{-1-2\theta}t^{-1-2\theta}\min(n^{-1}s^4u^{-2}v^{-2},
n^{-1}t^4, s^4u^{-2}, t^4v^{-2}, u^{-2}v^{-2}, u^{-2}, 1, v^{-2}) \nonumber \\ &\;\;\;\; dsdtdudv \nonumber \\ & = \int_{\mathbb{R}^4_+} G(s,t,u,v)\; dsdtdudv. \nonumber
\end{align}
Now we divide $\mathbb{R}^4_+$ into the regions according to the values of the minimum used in the integral above.
First we consider 8 regions $A_1, \cdots, A_8 \subseteq \mathbb{R}^4_+$ according to the values of
$\min(u^{-2}v^{-2}, u^{-2}, 1, v^{-2})$, and we further divide $A_i$'s ($1\leq i \leq 8$) into 3 sub-regions $A_{i,j}$ ($1\leq j\leq 3$)
according to the behavior of $s$ and $t$. See TABLE 1 in the next page for the details.
Note that if we take the transform $(s,t,u,v) \mapsto (t,s,u^{-1}, v^{-1})$ then the regions $A_5, \cdots, A_8$ and the associated integrand
correspond to those of $A_1, \cdots, A_4$, respectively, so that we are only to consider the cases $A_1, \cdots, A_4$.

\begin{table}
\caption{Regions}

\begin{center}\begin{minipage}{\textwidth}
\begin{tabular}{|c|c|c|l l|}

\hline \multirow{3}{0.5cm}{$A_1$} & \multirow{3}{4cm}{$0<u<1,\; 0< v < n^{-\frac{1}{2}}$} & $A_{1,1}$ & 
$s\geq u^{\frac{1}{2}},$& $t\geq n^{\frac{1}{4}}$ \\
\cline{3-5}
& & $A_{1,2}$ & $s < u^{\frac{1}{2}},$ & $ t\geq n^{\frac{1}{4}}u^{-\frac{1}{2}}s$ \\
\cline{3-5}
& & $A_{1,3}$ & $s\geq n^{-\frac{1}{4}}u^{\frac{1}{2}}t,$ & $ t < n^{\frac{1}{4}}$ \\

\hline \multirow{3}*{$A_2$} & \multirow{3}*{$0<u<1,\; n^{-\frac{1}{2}} \leq v < 1$} & $A_{2,1}$ &
$s\geq n^{\frac{1}{4}}u^{\frac{1}{2}}v^{\frac{1}{2}},$ & $ t\geq n^{\frac{1}{4}}$ \\
\cline{3-5}
& & $A_{2,2}$ & $s < n^{\frac{1}{4}}u^{\frac{1}{2}}v^{\frac{1}{2}},$ & $ t\geq u^{-\frac{1}{2}}u^{-\frac{1}{2}}s$ \\
\cline{3-5}
& & $A_{2,3}$ & $s\geq u^{\frac{1}{2}}u^{\frac{1}{2}}t,$ & $ t < n^{\frac{1}{4}}$ \\

\hline
\multirow{3}{0.5cm}{$A_3$} & \multirow{3}{4.5cm}{$0<u<1,\; n^{\frac{1}{2}} \leq v$} 
& $A_{3,1}$ & $s\geq n^{\frac{1}{4}}u^{\frac{1}{2}},$ & $ t\geq 1$\\
\cline{3-5}
& & $A_{3,2}$ & $s < n^{\frac{1}{4}}u^{\frac{1}{2}},$ & $ t\geq n^{-\frac{1}{4}}u^{-\frac{1}{2}}s$ \\
\cline{3-5}
& & $A_{3,3}$ & $s\geq n^{\frac{1}{4}}u^{\frac{1}{2}}t,$ & $ t < 1$ \\

\hline
\multirow{3}{0.5cm}{$A_4$} & \multirow{3}{4.5cm}{$0 < u < 1 \leq v < n^{\frac{1}{2}}$} 
& $A_{4,1}$ & $s\geq n^{\frac{1}{4}}u^{\frac{1}{2}},$ & $t\geq n^{\frac{1}{4}}v^{-\frac{1}{2}}$ \\
\cline{3-5}
& & $A_{4,2}$ & $s < n^{\frac{1}{4}}u^{\frac{1}{2}},$ & $ t\geq u^{-\frac{1}{2}}v^{-\frac{1}{2}}s$ \\
\cline{3-5}
& & $A_{4,3}$ & $s\geq u^{\frac{1}{2}}v^{\frac{1}{2}}t,$ & $ t < n^{\frac{1}{4}}v^{-\frac{1}{2}}$ \\

\hhline{|=====|} 
       $A_5$ \footnote{$A_{5,1}, \cdots, A_{8,3}$ are similarly determined but omitted.} 
       & $1\leq u ,\; n^{\frac{1}{2}} \leq v$   & $A_6$ & \multicolumn{2}{c|}{$1\leq u , \; 1 \leq v < n^{\frac{1}{2}}$}\\
\hline $A_7$ & $1\leq u ,\; 0 < v < n^{-\frac{1}{2}}$ & $A_8$ & \multicolumn{2}{c|}{$1\leq u , \; n^{-\frac{1}{2}} \leq v < 1$}\\
\hline

\end{tabular}\end{minipage}
\end{center}

\end{table}

The integrals over each regions are calculated in TABLE 2 in page 21. Note that the integrals over $A_{2,1}$ and $A_{4,1}$ are dominant with values 
$n^{1-\frac{\theta}{2}}\theta^{-1}$ when $\theta$ goes to $0$, and the integrals over $A_{2,3}$, $A_{4,2}$ and $A_{4,3}$ are dominant with values
$n^{1-\frac{\theta}{2}}(1-\theta)^{-\frac{3}{2}}$ when $\theta$ goes to $1$.
Thus, by combining all these calculations and $1-\frac{\theta}{2} = \frac{1}{p}$ we get the desired lower estimate $n^{\frac{1}{p}}\theta^{-1}(1-\theta)^{-\frac{3}{2}}$.

\begin{table} 
\caption{Integrals over the regions}
\begin{center}
\begin{tabular}{|c|c|c|c|}
\hline
\begin{minipage}[c]{1cm}
Region\\ \centering $A_{i,j}$
\end{minipage}
& 
\begin{minipage}[c]{3.5cm}
$$\Big(\int_{A_{i,j}} G\; dsdtdudv \Big)^{\frac{1}{2}}$$ : {\small The calculations below are only equivalent to the corresponding integral.}
\end{minipage}
&
\begin{minipage}[c]{2.5cm}
\centering Corresponding\\ \centering Function Space in \eqref{FunctionSpace2}
\end{minipage}
&
\begin{minipage}[c]{3cm}
\centering Corresponding\\ \centering Function Space in \eqref{FunctionSpace1}
\end{minipage}\\
\hline
$A_{1,1}$ &
\begin{minipage}[c]{3.5cm}
$$n^{\frac{3-\theta}{4}}\theta^{-1}(1-\theta)^{-\frac{1}{2}}$$
\end{minipage} & $L_2(n^2 \mu_{7,1}\times \mu_{7,2})$ & $L_2(n\mu_{7,1})\otimes_{\pi} L_2(n\mu_{7,2})$ \\
\hline
$A_{1,2}$ &
\begin{minipage}[c]{3.5cm}
$$n^{\frac{3-\theta}{4}}\theta^{-\frac{1}{2}}(1-\theta)^{-1}$$
\end{minipage} & $L_2(n^2\mu_{3,1}\times \mu_{3,2})$ & $L_2(n\mu_{3,1})\otimes_{\pi} L_2(n\mu_{3,2})$ \\
\hline
$A_{1,3}$ &
\begin{minipage}[c]{3.5cm}
$$n^{\frac{3-\theta}{4}}\theta^{-\frac{1}{2}}(1-\theta)^{-1}$$
\end{minipage} & $L_2(n\mu_2)$ & $L_2(n\mu_2)$ \\
\hline
$A_{2,1}$ &
\begin{minipage}[c]{3.5cm}
$$n^{1-\frac{\theta}{2}}\theta^{-1}(1-\theta)^{-1}$$
\end{minipage} & $L_2(n^2\mu_{7,1}\times \mu_{7,2})$ & $L_2(n\mu_{7,1})\otimes_{\pi} L_2(n\mu_{7,2})$ \\
\hline
$A_{2,2}$ &
\begin{minipage}[c]{3.5cm}
$$n^{1-\frac{\theta}{2}}\theta^{-\frac{1}{2}}(1-\theta)^{-1}$$
\end{minipage} & $L_2(n\mu_1)$ & $L_2(n\mu_1)$ \\
\hline
$A_{2,3}$ &
\begin{minipage}[c]{3.5cm}
$$n^{1-\frac{\theta}{2}}\theta^{-\frac{1}{2}}(1-\theta)^{-\frac{3}{2}}$$
\end{minipage} & $L_2(n\mu_2)$ & $L_2(n\mu_2)$ \\
\hline
$A_{3,1}$ &
\begin{minipage}[c]{3.5cm}
$$n^{\frac{3-\theta}{4}}\theta^{-1}(1-\theta)^{-\frac{1}{2}}$$
\end{minipage} & $L_2(n^2\mu_{8,1}\times \mu_{8,2})$ & $L_2(n\mu_{8,1})\otimes_{\pi} L_2(n\mu_{8,2})$ \\
\hline
$A_{3,2}$ &
\begin{minipage}[c]{3.5cm}
$$n^{\frac{3-\theta}{4}}\theta^{-\frac{1}{2}}(1-\theta)^{-1}$$
\end{minipage} & $L_2(n\mu_1)$ & $L_2(n\mu_1)$ \\
\hline
$A_{3,3}$ &
\begin{minipage}[c]{3.5cm}
$$n^{\frac{3-\theta}{4}}\theta^{-\frac{1}{2}}(1-\theta)^{-1}$$
\end{minipage} & $L_2(n^2\mu_{4,1}\times \mu_{4,2})$ & $L_2(n\mu_{4,1})\otimes_{\pi} L_2(n\mu_{4,2})$ \\
\hline
$A_{4,1}$ &
\begin{minipage}[c]{3.5cm}
$$n^{1-\frac{\theta}{2}}\theta^{-1}(1-\theta)^{-1}$$
\end{minipage} & $L_2(n^2\mu_{8,1}\times \mu_{8,2})$ & $L_2(n\mu_{8,1})\otimes_{\pi} L_2(n\mu_{8,2})$ \\
\hline
$A_{4,2}$ &
\begin{minipage}[c]{3.5cm}
$$n^{1-\frac{\theta}{2}}\theta^{-\frac{1}{2}}(1-\theta)^{-\frac{3}{2}}$$
\end{minipage} & $L_2(n\mu_1)$ & $L_2(n\mu_1)$ \\
\hline
$A_{4,3}$ &
\begin{minipage}[c]{3.5cm}
$$n^{1-\frac{\theta}{2}}\theta^{-\frac{1}{2}}(1-\theta)^{-\frac{3}{2}}$$
\end{minipage} & $L_2(n\mu_2)$ & $L_2(n\mu_2)$ \\
\hline

\end{tabular}
\end{center}
\end{table}

Now we consider the upper estimate. We use the same regions and fortunately that is enough. Indeed, we have
\begin{align*}
\lefteqn{\norm{{\bf 1} \otimes \sum^n_{i=1}e_i \otimes e_i \otimes \delta_i}_{\K_{\Pi^o_1(OH_n, S^n_p)}}}\\
& = \norm{({\bf 1}_{A_{1,1}} + \cdots + {\bf 1}_{A_{4,3}} + {\bf 1}_{A_{5,1}} + \cdots + 
{\bf 1}_{A_{8,3}} ) \otimes \sum^n_{i=1}e_i \otimes e_i \otimes \delta_i}_{\K_{\Pi^o_1(OH_n, S^n_p)}}\\
& \leq \norm{\sum_{A \in R_1} {\bf 1}_A }_{L_2(n\mu_1)} + \norm{\sum_{A \in R_2} {\bf 1}_A }_{L_2(n\mu_2)} + \cdots \\
& \;\;\;\; + \norm{\sum_{A \in R_3}{\bf 1}_A}_{L_2(n\mu_{3,1})\otimes_{\pi}L_2(n\mu_{3,2})} + \cdots
+ \norm{\sum_{A \in R_8}{\bf 1}_A}_{L_2(n\mu_{8,1})\otimes_{\pi}L_2(n\mu_{8,2})},
\end{align*}
where $$R_l := \{ A_{i,j} : A_{i,j}\; \text{corresponds to}\; L_2(n\mu_l) \; \text{in \eqref{FunctionSpace1}}\}$$ for $l =1,2$ and 
$$R_l := \{ A_{i,j} : A_{i,j}\; \text{corresponds to}\; L_2(n\mu_{l,1})\otimes_{\pi}L_2(n\mu_{l,2}) \; \text{in \eqref{FunctionSpace1}}\}$$
for $3\leq l \leq 8$. Thus, we get the upper bound of $\norm{{\bf 1} \otimes \sum^n_{i=1}e_i \otimes e_i \otimes \delta_i}_{\K_{\Pi^o_1(OH_n, S^n_p)}}$, namely the sum of norms of ${\bf 1}_{A_{i,j}}$'s calculated in the corresponding function spaces in \eqref{FunctionSpace1}.
However, this is the same as the lower bound which is nothing but the sum of norms of ${\bf 1}_{A_{i,j}}$'s calculated in the corresponding function spaces in \eqref{FunctionSpace2}.

Indeed, the terms corresponding to $L_2(\mu_1)$ or $L_2(\mu_2)$ are no problem since we calculate the norm in the same space.
For the remaining problematic terms we observe the following.
For example, if we consider the region $$A_{1,2} = \{0<u<1,\; s < u^{\frac{1}{2}},\;
t\geq n^{\frac{1}{4}}u^{-\frac{1}{2}}s \} \times \{0< v < n^{-\frac{1}{2}}\},$$ then we need to compare two norms calculated in $$L_2(\mu_{3,1})\otimes_{\pi}L_2(\mu_{3,2}) = L_2(s^{4-2\theta}t^{-2\theta}u^{-1}\frac{dsdtdu}{stu})\otimes_{\pi}L_2(v\frac{dv}{v})$$ and
$$L_2(\mu_{3,1}\times \mu_{3,2}) = L_2(s^{4-2\theta}t^{-2\theta}u^{-1}\frac{dsdtdu}{stu})\otimes_2 L_2(v\frac{dv}{v}),$$
which are the same since we have the separation of variables $(s,t,u)$ and $v$ and then the norms are just the product of two $L_2$-norms. 

Let's check another one. If we consider the region $$A_{4,1} = \{0 < u < 1,\; s\geq n^{\frac{1}{4}}u^{\frac{1}{2}}\} 
\times \{1 \leq v < n^{\frac{1}{2}}, \; t\geq n^{\frac{1}{4}}v^{-\frac{1}{2}}\},$$ then we need to compare two norms calculated in $$L_2(\mu_{8,1})\otimes_{\pi}L_2(\mu_{8,2}) = L_2(s^{-2\theta}u\frac{dsdu}{su})\otimes_{\pi}L_2(t^{-2\theta}v^{-1}\frac{dtdv}{tv})$$ and
$$L_2(\mu_{8,1}\times \mu_{8,2}) = L_2(s^{-2\theta}u\frac{dsdu}{su})\otimes_2 L_2(t^{-2\theta}v^{-1}\frac{dtdv}{tv}),$$
which are the same since we have the separation of variables $(s,u)$ and $(t,v)$ as we wanted. 

Similarly we can easily check that this separation of variables happens in every problematic terms, which leads us to the desired upper bound.
\end{proof}

\begin{rem}{\rm
When $\theta = 1$ we recover the well known $\sqrt{1 + \log n}\,$ factor (Proposition 4.9 of \cite{J05})
in the integral over every subregion of $A_2$, $A_4$, $A_6$ and $A_8$.
}
\end{rem}

\section{An application of Orlicz spaces}\label{sec-Orlicz}

In this section we will show that the result for the identity in the previous section is enough to conclude our final goal.
First we will look at the diagonal part to see that it is equivalent to an Orlicz sequence space,
and for the whole matrix we will consider its vector valued case. This ``Orlicz space argument" goes back to an unpublished result of Junge and Xu
and is also used by K. L. Yew in \cite{Y-p-summing}.

We consider the function $\Psi$ defined on $[0,\infty)$ by
\begin{align*}
\Psi(x) & = \inf_{{\bf 1} = f_1 + \cdots + f_8} x^2 \norm{f_1}^2_{L_2(\mu_1)} + x^2 \norm{f_2}^2_{L_2(\mu_2)}
+ x \norm{f_3}_{L_2(\mu_{3,1})\otimes_{\pi}L_2(\mu_{3,2})} + \cdots\\ & \;\;\;\;\;\;\;\;\;\;\;\;\;\;\;\;\;\;\;\;
+ x \norm{f_8}_{L_2(\mu_{8,1})\otimes_{\pi}L_2(\mu_{8,2})}.
\end{align*}

\begin{lem}\label{lem-equi-orlicz}
$\Psi$ is equivalent to a Orlicz function $\widetilde{\Psi}$.
\end{lem}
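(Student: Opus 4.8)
The plan is to isolate the one structural feature that makes $\Psi$ behave like an Orlicz function, namely the monotonicity of the ratio $x\mapsto \Psi(x)/x$, and then to convexify $\Psi$ by an elementary averaging. First I would dispose of the regularity. At $x=0$ every summand in the definition carries a factor $x$ or $x^2$, so $\Psi(0)=0$ for any admissible decomposition, whence $\Psi(0)=0$. Since the $L_2(\mu_1)$, $L_2(\mu_2)$ norms and the projective tensor norms $\norm{\cdot}_{L_2(\mu_{l,1})\otimes_\pi L_2(\mu_{l,2})}$ are genuine (faithful) norms, no decomposition ${\bf 1}=f_1+\cdots+f_8$ can make all eight summands vanish; this yields $\Psi(x)>0$ for $x>0$. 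Finiteness of $\Psi(x)$ for every $x$ follows by exhibiting a single admissible decomposition with all norms finite, exactly as in the integral estimates underlying Theorem \ref{thm-calculation-identity}.

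The decisive observation is that
$$\frac{\Psi(x)}{x}=\inf_{{\bf 1}=f_1+\cdots+f_8}\Big[x\big(\norm{f_1}^2_{L_2(\mu_1)}+\norm{f_2}^2_{L_2(\mu_2)}\big)+\sum_{l=3}^{8}\norm{f_l}_{L_2(\mu_{l,1})\otimes_\pi L_2(\mu_{l,2})}\Big].$$
For each fixed decomposition the bracketed quantity is an affine function $x\mapsto ax+b$ with $a=\norm{f_1}^2_{L_2(\mu_1)}+\norm{f_2}^2_{L_2(\mu_2)}\ge 0$ and $b=\sum_{l\ge 3}\norm{f_l}\ge 0$, hence nondecreasing in $x$. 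An infimum of a family of nondecreasing functions is nondecreasing, so $x\mapsto\Psi(x)/x$ is nondecreasing; being an infimum of affine functions it is moreover concave, and therefore continuous on $(0,\infty)$, which in turn makes $\Psi(x)=x\cdot(\Psi(x)/x)$ continuous.

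With this in hand I would define the candidate Orlicz function by averaging,
$$\widetilde{\Psi}(x)=\int_0^x\frac{\Psi(s)}{s}\,ds.$$
Its derivative $\Psi(s)/s$ is nondecreasing, so $\widetilde{\Psi}$ is convex, vanishes at $0$, is increasing, and tends to $\infty$ (by monotonicity $\Psi(s)/s\ge\Psi(1)>0$ for $s\ge 1$, so $\widetilde{\Psi}(x)\ge\Psi(1)(x-1)$); thus $\widetilde{\Psi}$ is an Orlicz function. The two-sided equivalence is then immediate from the monotonicity of $\Psi(s)/s$: on one side $\widetilde{\Psi}(x)=\int_0^x\frac{\Psi(s)}{s}\,ds\le\int_0^x\frac{\Psi(x)}{x}\,ds=\Psi(x)$, and on the other $\widetilde{\Psi}(2x)\ge\int_x^{2x}\frac{\Psi(s)}{s}\,ds\ge\int_x^{2x}\frac{\Psi(x)}{x}\,ds=\Psi(x)$. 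Hence $\widetilde{\Psi}(x)\le\Psi(x)\le\widetilde{\Psi}(2x)$, which is exactly the asserted equivalence of $\Psi$ with the Orlicz function $\widetilde{\Psi}$.

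I expect the only genuinely delicate point to be the book-keeping behind the regularity claims: verifying that $\Psi$ is finite and strictly positive on $(0,\infty)$, and that it does not degenerate near $0$, i.e.\ that ${\bf 1}$ really has finite norm in the $L_2(\mu_1)+_2L_2(\mu_2)$ part, so that $\Psi(s)/s\to 0$ as $s\to 0$ and $\widetilde{\Psi}$ is a nondegenerate Orlicz function. All of this is supplied by the explicit integral computations already carried out for the identity in the previous section; the conceptual content of the lemma is entirely contained in the monotonicity of $\Psi(x)/x$ together with the averaging identity above.
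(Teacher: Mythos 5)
Your proof is correct and follows essentially the same route as the paper: both rest on the single key observation that $\Psi(x)/x$ is an infimum of nondecreasing affine functions of $x$ and hence nondecreasing, after which one convexifies. The only difference is the convexification device: the paper takes the secant-line convex minorant and quotes Lemma 1.e.7 of Lindenstrauss--Tzafriri to get $\tfrac{1}{4}\Psi(x)\leq\Psi(\tfrac{x}{2})\leq\widetilde{\Psi}(x)\leq\Psi(x)$, whereas you use $\widetilde{\Psi}(x)=\int_0^x\Psi(s)s^{-1}\,ds$ and verify the equivalent two-sided bound $\widetilde{\Psi}(x)\leq\Psi(x)\leq\widetilde{\Psi}(2x)$ directly, which is a self-contained substitute for that citation.
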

\begin{proof}
Clearly we have $\Psi(0) = 0$ and $\lim_{x\rightarrow \infty} \Psi(x) = \infty$.
Since we have
\begin{align*}
\frac{\Psi(x)}{x} & = \inf_{{\bf 1} = f_1 + \cdots + f_8} x \norm{f_1}^2_{L_2(\mu_1)} + x \norm{f_2}^2_{L_2(\mu_2)}
+ \norm{f_3}_{L_2(\mu_{3,1})\otimes_{\pi}L_2(\mu_{3,2})} + \cdots\\ & \;\;\;\;\;\;\;\;\;\;\;\;\;\;\;\;\;\;\;\;
+ \norm{f_8}_{L_2(\mu_{8,1})\otimes_{\pi}L_2(\mu_{8,2})}
\end{align*}
it is also clear that $\frac{\Psi(x)}{x}$ is an increasing function.

Now we consider the convex function $\widetilde{\Psi}(x) = \inf \{ f(x) : f\in \F_x\}$, where $\F_x$ is the set of
all linear functions intersecting at least two distinct points with the graph of $\Psi$.
Then by Lemma 1.e.7 of \cite{LTz} we have $$\frac{\Psi(x)}{4} \leq \Psi(\frac{x}{2}) \leq \widetilde{\Psi}(x) \leq \Psi(x).$$

\end{proof}

Due to the previous lemma we can consider the Orlicz sequence space $\ell_{\widetilde{\Psi}}$ defined by
	$$\ell_{\widetilde{\Psi}} = \{(a_n) : \sum_{n\geq 1}\widetilde{\Psi}\Big(\frac{\abs{a_n}}{\rho}\Big)
	< \infty \;\text{for some}\; \rho >0\}$$ and $$\norm{(a_n)}_{\widetilde{\Psi}} = \inf\{ \rho > 0 :
	\sum_{n\geq 1}\widetilde{\Psi}\Big(\frac{\abs{a_n}}{\rho}\Big)\leq 1\}.$$
We recover a similar form of our function space by a standard argument.
	\begin{lem}\label{lem-recover-form}
		\begin{align*}
		\norm{(a_n)}_{\widetilde{\Psi}} & \sim \inf\{ \norm{g_1}_{L_2(\mu_1 ;\ell_2)} + \norm{g_2}_{L_2(\mu_2; \ell_2)}
		+ \norm{g_3}_{L_2(\mu_{3,1})\otimes_{\pi}L_2(\mu_{3,2})\otimes_{\pi}\ell_1}\\ & \;\;\;\;\;\;\;\;\;\; + \cdots
		+ \norm{g_8}_{L_2(\mu_{8,1})\otimes_{\pi}L_2(\mu_{8,2})\otimes_{\pi}\ell_1}\},
		\end{align*}
	where the infimum runs over all possible $g_1 = (g^n_1)_{n}, \cdots, g_8 = (g^n_8)_{n}$ with $${\bf 1}\otimes a_n = g^n_1 + \cdots + g^n_8.$$
	\end{lem}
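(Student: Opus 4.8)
The plan is to show that both sides reduce to the same scalar optimization, after replacing $\widetilde{\Psi}$ by the equivalent function $\Psi$ from Lemma \ref{lem-equi-orlicz} (equivalent Orlicz functions give equivalent norms). First I would rewrite the eight vector-valued norms on the right-hand side as honest $\ell_2$- and $\ell_1$-aggregations over the index $n$. For the first two slots, since $\ell_2$ is Hilbertian, Fubini gives $\norm{g_1}_{L_2(\mu_1;\ell_2)} = \big(\sum_n \norm{g_1^n}_{L_2(\mu_1)}^2\big)^{1/2}$ and similarly for $g_2$. For $3 \le j \le 8$, writing $X_j = L_2(\mu_{j,1})\otimes_\pi L_2(\mu_{j,2})$ and using the isometric identification $X_j \widehat{\otimes}\ell_1 \cong \ell_1(X_j)$, the projective $\ell_1$-factor turns the norm into the sum $\sum_n \norm{g_j^n}_{X_j}$. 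Thus the right-hand side is the infimum, over decompositions $\mathbf{1}\otimes a_n = g_1^n + \cdots + g_8^n$, of $\big(\sum_n\norm{g_1^n}^2\big)^{1/2} + \big(\sum_n\norm{g_2^n}^2\big)^{1/2} + \sum_{j\ge3}\sum_n\norm{g_j^n}$.

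Next I would unfold the left-hand side. By definition $\norm{(a_n)}_\Psi = \inf\{\rho>0 : \sum_n \Psi(\abs{a_n}/\rho) \le 1\}$, and each $\Psi(\abs{a_n}/\rho)$ is itself an infimum over decompositions $\mathbf{1} = f_1^{(n)} + \cdots + f_8^{(n)}$ of the constant function, with the two $L_2$-slots weighted by $(\abs{a_n}/\rho)^2\norm{\cdot}^2$ and the six projective slots by $(\abs{a_n}/\rho)\norm{\cdot}$. Because the constraints on the $f^{(n)}$ decouple across $n$, the sum of the per-$n$ infima equals one joint infimum, so the Orlicz condition becomes the existence of a decomposition with $a\rho^{-2} + b\rho^{-1}\le1$, where $a = \sum_n\big(\norm{g_1^n}^2+\norm{g_2^n}^2\big)$ and $b = \sum_{j\ge3}\sum_n\norm{g_j^n}$ after the homogeneity substitution $g_j^n = a_n f_j^{(n)}$ (which matches the decompositions of the first paragraph exactly).

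The heart of the matter is then an elementary scalar estimate. For a fixed decomposition the smallest admissible $\rho$ solves $a\rho^{-2}+b\rho^{-1}=1$, i.e. $\rho = \tfrac12\big(b+\sqrt{b^2+4a}\big)$, and the bounds $\max(b,\sqrt{a}) \le \rho \le b + \sqrt{a}$ give $\rho \sim \sqrt{a}+b$. Taking the infimum over decompositions and using $\sqrt{a} = \sqrt{P^2+Q^2}\sim P+Q$ with $P=(\sum_n\norm{g_1^n}^2)^{1/2}$, $Q=(\sum_n\norm{g_2^n}^2)^{1/2}$, I would conclude $\norm{(a_n)}_\Psi \sim \inf\big[P + Q + b\big]$, which is precisely the right-hand side produced in the first paragraph.

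The step I expect to be most delicate is the bookkeeping that aligns the two decomposition problems: one must verify that the interchange of the infimum over the $f^{(n)}$ with the sum over $n$ is legitimate (it is, since the per-$n$ constraints are independent) and that the mixed homogeneity --- quadratic in the $L_2(\mu_i;\ell_2)$ slots and linear in the six projective slots --- is exactly what forces $\ell_2$-aggregation in the first two terms and $\ell_1$-aggregation in the rest. The only genuine analytic ingredients are the identification $X\widehat{\otimes}\ell_1\cong\ell_1(X)$ and control of the universal constants lost in passing between $\Psi$ and $\widetilde{\Psi}$ and between $\sqrt{P^2+Q^2}$ and $P+Q$.
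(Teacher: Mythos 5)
Your proposal is correct and follows essentially the same route as the paper: both arguments rest on the homogeneity substitution $g^n_l = a_n f^n_l$, the decoupling of the decompositions of $\mathbf{1}$ across $n$, the identifications $L_2(\mu;\ell_2)\cong\ell_2(L_2(\mu))$ and $X\otimes_\pi\ell_1\cong\ell_1(X)$, and the equivalence $\Psi\sim\widetilde{\Psi}$. The only difference is presentational --- you make the Luxemburg gauge explicit via the quadratic $\rho\sim\sqrt{a}+b$, whereas the paper runs the equivalent two-sided normalization argument (assume one side $<1$, produce a decomposition witnessing the other side $\lesssim 1$) with explicit constants $32$ and $8$.
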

\begin{proof}
Let $R[(a_n)]$ be the right side. Suppose we have $\norm{(a_n)}_{\widetilde{\Psi}} < 1$, then, by Lemma \ref{lem-equi-orlicz} we can choose
	$${\bf 1} = f^n_1 + \cdots + f^n_8\,\; \text{for each $n$}$$
satisfying
	\begin{align*}
	& \sum_n \big[ \abs{a_n}^2 \norm{f^n_1}^2_{L_2(\mu_1)} + \abs{a_n}^2 \norm{f^n_2}^2_{L_2(\mu_2)}
	+ \abs{a_n} \norm{f^n_3}_{L_2(\mu_{3,1})\otimes_{\pi}L_2(\mu_{3,2})}\\
	& \;\;\;\;\;\;\;\; + \cdots + \abs{a_n} \norm{f^n_8}_{L_2(\mu_{8,1})\otimes_{\pi}L_2(\mu_{8,2})} \big]< 4.
	\end{align*}
Then, we have
	\begin{align*}
	\sum_n \abs{a_n}^2 \norm{f^n_1}^2_{L_2(\mu_1)} & ,\; \sum_n \abs{a_n}^2 \norm{f^n_2}^2_{L_2(\mu_2)},\;
	\sum_n \abs{a_n} \norm{f^n_3}_{L_2(\mu_{3,1})\otimes_{\pi}L_2(\mu_{3,2})},\\ \cdots \; & , \;
	\sum_n \abs{a_n} \norm{f^n_8}_{L_2(\mu_{8,1})\otimes_{\pi}L_2(\mu_{8,2})} < 4
	\end{align*}
which implies
	\begin{align*}
	R[(a_n)]& \leq \Big(\sum_n \abs{a_n}^2 \norm{f^n_1}^2_{L_2(\mu_1)}\Big)^{\frac{1}{2}} +
	\Big(\sum_n \abs{a_n}^2 \norm{f^n_2}^2_{L_2(\mu_2)}\Big)^{\frac{1}{2}}\\
	& \;\;\;\; + \sum_n \abs{a_n} \norm{f^n_3}_{L_2(\mu_{3,1})\otimes_{\pi}L_2(\mu_{3,2})}\\& \;\;\;\;
	+ \cdots + \sum_n \abs{a_n}\norm{f_8}_{L_2(\mu_{8,1})\otimes_{\pi}L_2(\mu_{8,2})} < 32
	\end{align*}
by setting $g^n_l = a_n\otimes f^n_l$ for $1\leq l \leq 8$ and $n\geq 1$.
Thus, we get
	$$R[(a_n)] \leq 32\norm{(a_n)}_{\widetilde{\Psi}}.$$

For the converse we assume that $R[(a_n)]< 1$. Then we can choose
$${\bf 1}\otimes a_n = g^n_1 + \cdots + g^n_8$$ such that
\begin{align*}
&\norm{g_1}_{L_2(\mu_1 ;\ell_2)} + \norm{g_2}_{L_2(\mu_2; \ell_2)}
+ \norm{g_3}_{L_2(\mu_{3,1})\otimes_{\pi}L_2(\mu_{3,2})\otimes_{\pi}\ell_1}\\ & \;\;\;\; + \cdots
+ \norm{g_8}_{L_2(\mu_{8,1})\otimes_{\pi}L_2(\mu_{8,2})\otimes_{\pi}\ell_1} < 1,
\end{align*}
which means
\begin{align*}
&\frac{\sum_n\norm{g^n_1}^2_{L_2(\mu_1)}}{8^2},\; \frac{\sum_n \norm{g^n_2}^2_{L_2(\mu_2)}}{8^2} ,\;
\frac{\sum_n \norm{g^n_3}_{L_2(\mu_{3,1})\otimes_{\pi}L_2(\mu_{3,2})}}{8}, \\ & \;\;\;\; \cdots
, \frac{\sum_n \norm{g^n_8}_{L_2(\mu_{8,1})\otimes_{\pi}L_2(\mu_{8,2})}}{8}  < \frac{1}{8}.
\end{align*}
Thus, by observing ${\bf 1} = a^{-1}_n g^n_1 + \cdots + a^{-1}_n g^n_8$ for non-zero $a_n$, we have
\begin{align*}
\sum_{n\geq 1}\widetilde{\Psi}\Big(\frac{\abs{a_n}}{8}\Big) &\leq \sum_{n\geq 1}\Psi\Big(\frac{\abs{a_n}}{8}\Big)\\
& \leq \sum_{n\geq 1}\Big( \frac{\norm{g^n_1}^2_{L_2(\mu_1)}}{8^2} + \frac{\norm{g^n_2}^2_{L_2(\mu_2)}}{8^2} +
\frac{\norm{g^n_3}_{L_2(\mu_{3,1})\otimes_{\pi}L_2(\mu_{3,2})}}{8}\\ &\;\;\;\;\;\;\;\;\;\;\;\; + \cdots + \frac{\norm{g^n_8}_{L_2(\mu_{8,1})\otimes_{\pi}L_2(\mu_{8,2})}}{8} \Big) < 1,
\end{align*}
which means
	$$\norm{(a_n)}_{\ell_{\widetilde{\Psi}}} < 8.$$

\end{proof}

In the case of identity we can further simplify the calculation by the averaging trick.

	\begin{lem}\label{lem-recover-form-identity}
		\begin{align*}
		\norm{\sum^n_{i=1}e_i}_{\widetilde{\Psi}} & \sim \inf_{{\bf 1} = f_1 + \cdots + f_8} n^{\frac{1}{2}} \norm{f_1}_{L_2(\mu_1)}
		+ n^{\frac{1}{2}}\norm{f_2}_{L_2(\mu_2)}\\ & \;\;\;\;\;\;\;\;\;\;\;\;\;\;\;\;\;\;\;\; 
		+ n \norm{f_3}_{L_2(\mu_{3,1})\otimes_{\pi}L_2(\mu_{3,2})} + \cdots + n \norm{f_8}_{L_2(\mu_{8,1})\otimes_{\pi}L_2(\mu_{8,2})}.
		\end{align*}
	\end{lem}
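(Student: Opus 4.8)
The plan is to specialize Lemma \ref{lem-recover-form} to the sequence $(a_i)$ with $a_i = 1$ for $1 \le i \le n$ and $a_i = 0$ otherwise, and then to pass from the resulting coordinate-dependent decompositions to a single decomposition of ${\bf 1}$ by an averaging argument. Writing out the right-hand side of Lemma \ref{lem-recover-form} for this sequence, the condition ${\bf 1}\otimes a_i = g^i_1 + \cdots + g^i_8$ forces ${\bf 1} = g^i_1 + \cdots + g^i_8$ for each $1 \le i \le n$; since the first two norms are $\ell_2$-valued in the coordinate index while the remaining six carry an $\ell_1$ over that index, I obtain
\begin{align*}
\norm{\sum^n_{i=1}e_i}_{\widetilde{\Psi}} & \sim \inf \Big\{ \Big(\sum^n_{i=1}\norm{g^i_1}^2_{L_2(\mu_1)}\Big)^{\frac{1}{2}} + \Big(\sum^n_{i=1}\norm{g^i_2}^2_{L_2(\mu_2)}\Big)^{\frac{1}{2}} \\ & \;\;\;\;\;\; + \sum^n_{i=1}\norm{g^i_3}_{L_2(\mu_{3,1})\otimes_{\pi}L_2(\mu_{3,2})} + \cdots + \sum^n_{i=1}\norm{g^i_8}_{L_2(\mu_{8,1})\otimes_{\pi}L_2(\mu_{8,2})} \Big\},
\end{align*}
the infimum being over all decompositions ${\bf 1} = g^i_1 + \cdots + g^i_8$ with $1 \le i \le n$. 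It then suffices to show this quantity is equivalent to the right-hand side $J_n$ of the present statement.

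For the inequality $\norm{\sum_{i=1}^n e_i}_{\widetilde{\Psi}} \lesssim J_n$ I would fix a near-optimal single decomposition ${\bf 1} = f_1 + \cdots + f_8$ realizing $J_n$ and simply set $g^i_l = f_l$ for every $i$. Then $(\sum^n_{i=1}\norm{f_1}^2_{L_2(\mu_1)})^{\frac{1}{2}} = n^{\frac{1}{2}}\norm{f_1}_{L_2(\mu_1)}$ and $\sum^n_{i=1}\norm{f_3}_{L_2(\mu_{3,1})\otimes_{\pi}L_2(\mu_{3,2})} = n\norm{f_3}_{L_2(\mu_{3,1})\otimes_{\pi}L_2(\mu_{3,2})}$, with the analogous identities for the other six terms, so the displayed infimum is bounded by $J_n$.

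The converse, and the only real point, is the averaging trick. Given an arbitrary decomposition $(g^i_l)$ as above, I would set $f_l = \frac{1}{n}\sum^n_{i=1}g^i_l$; since each coordinate sums to ${\bf 1}$ we still have ${\bf 1} = f_1 + \cdots + f_8$. For the two quadratic terms, convexity of the norm together with Cauchy--Schwarz gives $\norm{f_1}_{L_2(\mu_1)} \le \frac{1}{n}\sum^n_{i=1}\norm{g^i_1}_{L_2(\mu_1)} \le n^{-\frac{1}{2}}(\sum^n_{i=1}\norm{g^i_1}^2_{L_2(\mu_1)})^{\frac{1}{2}}$, so $n^{\frac{1}{2}}\norm{f_1}_{L_2(\mu_1)}$ is dominated by the first term of the infimum; for the six linear terms the triangle inequality alone gives $\norm{f_3}_{L_2(\mu_{3,1})\otimes_{\pi}L_2(\mu_{3,2})} \le \frac{1}{n}\sum^n_{i=1}\norm{g^i_3}_{L_2(\mu_{3,1})\otimes_{\pi}L_2(\mu_{3,2})}$, so $n\norm{f_3}$ is dominated by the corresponding sum. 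Summing the eight estimates shows that $J_n$ is bounded by $n^{\frac{1}{2}}\norm{f_1} + \cdots + n\norm{f_8}$, hence by the displayed infimum; taking the infimum over $(g^i_l)$ and invoking the equivalence above finishes the argument. The delicate point is exactly the matching of the averaging factor $\frac{1}{n}$ against the weights $n^{\frac{1}{2}}$ and $n$: Cauchy--Schwarz consumes precisely one factor $n^{\frac{1}{2}}$ for the $\ell_2$-valued terms, while for the $\ell_1$ terms no gain beyond the triangle inequality is available, and none is needed.
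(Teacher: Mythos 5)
Your proposal is correct and follows essentially the same route as the paper: specialize Lemma \ref{lem-recover-form} to the indicator sequence, note the trivial direction by taking all $g^i_l$ equal to a fixed $f_l$, and obtain the converse by averaging $f_l = \frac{1}{n}\sum_i g^i_l$ (the paper phrases this as an average over the symmetric group, which yields the same $f_l$). The only cosmetic difference is that you bound the $\ell_2$-valued terms via scalar triangle inequality plus Cauchy--Schwarz, whereas the paper uses permutation invariance of the vector-valued norm; both give exactly the bound $n^{1/2}\norm{f_l}_{L_2(\mu_l)} \le \norm{g_l}_{L_2(\mu_l;\ell_2)}$.
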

\begin{proof}
Let
	\begin{align*}
	A & := \norm{g_1}_{L_2(\mu_1 ;\ell_2)} + \norm{g_2}_{L_2(\mu_2; \ell_2)}
	+ \norm{g_3}_{L_2(\mu_{3,1})\otimes_{\pi}L_2(\mu_{3,2})\otimes_{\pi}\ell_1}\\ & \;\;\;\; + \cdots
	+ \norm{g_8}_{L_2(\mu_{8,1})\otimes_{\pi}L_2(\mu_{8,2})\otimes_{\pi}\ell_1}
	\end{align*}
for fixed $g_1 = (g^i_1)^n_{i=1}, \cdots, g_8 = (g^i_8)^n_{i=1}$ with $1 = g^i_1 + \cdots + g^i_8$.
Now we set
	$$f_l = \frac{1}{\abs{S_n}}\sum_{\sigma \in S_n}g^{\sigma(i)}_l,\; 1\leq l \leq 8,$$
where $S_n$ is the permutation group of $\{1,\cdots ,n\}$. Then for $l = 1,2$ we have
	\begin{align*}
	n^{\frac{1}{2}}\norm{f_l}_{L_2(\mu_l)} & = \norm{\sum^n_{i=1}f_l \otimes e_i}_{L_2(\mu_l;\ell_2)}\\
	& \leq \frac{1}{\abs{S_n}}\sum_{\sigma \in S_n}\norm{\sum^n_{i=1}g^{\sigma(i)}_l\otimes e_i}_{L_2(\mu_l;\ell_2)}\leq \norm{g_l}_{L_2(\mu_l ;\ell_2)}.
	\end{align*}
Similarly, we have
	$$n\norm{f_l}_{L_2(\mu_{l,1})\otimes_{\pi}L_2(\mu_{l,2})} \leq \norm{g_l}_{L_2(\mu_{l,1})\otimes_{\pi}L_2(\mu_{l,2})\otimes_{\pi}\ell_1}$$
for $3\leq l \leq 8$.

Consequently, we have
	\begin{align*}
	A & \geq n^{\frac{1}{2}} \norm{f_1}_{L_2(\mu_1)} + n^{\frac{1}{2}}\norm{f_2}_{L_2(\mu_2)}
	+ n \norm{f_3}_{L_2(\mu_{3,1})\otimes_{\pi}L_2(\mu_{3,2})}\\ &\;\;\;\; + \cdots
	+ n \norm{f_8}_{L_2(\mu_{8,1})\otimes_{\pi}L_2(\mu_{8,2})},
	\end{align*}
which leads us to the desired conclusion by Lemma \ref{lem-recover-form}.

\end{proof}

Now we prove Lemma \ref{lem-equi-identity}.

{\it (proof of Lemma \ref{lem-equi-identity})}
	$$\norm{{\bf 1} \otimes \sum^n_{i=1}e_i \otimes e_i \otimes \delta_i}_{\K_{\Pi^o_1(OH_n, S^n_p)}}
	= \inf \Big\{ \sum^8_{l=1}\norm{h_l}_{F_l}\Big\},$$
where the infimum runs over all possible decomposition
	$${\bf 1}\otimes \sum^n_{i=1}e_i \otimes e_i \otimes \delta_i = h_1 + \cdots + h_8.$$
For a given $\epsilon > 0$ we consider a decomposition $(h_l)^8_{l=1}$ with
	$$\sum^8_{l=1}\norm{h_l}_{F_l} \leq (1+\epsilon)\norm{{\bf 1} \otimes \sum^n_{i=1}e_i \otimes e_i \otimes \delta_i}_{\K_{\Pi^o_1(OH_n, S^n_p)}},$$
and let
	$$h_l = \sum^n_{i,j,k =1} h_l^{(i,j,k)} \otimes e_i \otimes e_j \otimes e_k$$
with scalar-valued $h_l^{(i,j,k)}$ for $1\leq l \leq 8$.

If we consider the diagonal projection
	$$P : \ell^n_2 \otimes \ell^n_2 \otimes \ell^n_2 \rightarrow \ell^n_2 \otimes \ell^n_2 \otimes \ell^n_2,
	\; e_i \otimes e_j \otimes e_k \mapsto \delta_{i,j,k}e_i \otimes e_i \otimes e_k,$$
then we have
	\begin{align*}
	{\bf 1}\otimes \sum^n_{i=1}e_i \otimes e_i \otimes \delta_i & = (I\otimes P)
	\Big({\bf 1}\otimes \sum^n_{i=1}e_i \otimes e_i \otimes \delta_i \Big)\\
	& = (I\otimes P)\sum^8_{l=1}h_l = \sum^8_{l=1}\sum^n_{i = 1}h_l^{(i,i,i)} \otimes e_i \otimes e_i \otimes \delta_i
	\end{align*}
and
	$$\sum^8_{l=1}\norm{(I\otimes P)h_l}_{F_l}\leq \sum^8_{l=1}\norm{h_l}_{F_l} \leq
	(1+\epsilon)\norm{{\bf 1} \otimes \sum^n_{i=1}e_i \otimes e_i \otimes \delta_i}_{\K_{\Pi^o_1(OH_n, S^n_p)}}.$$
Indeed, we are only to check that $P$ is completely contractive as mappings on
$C_n \widehat{\otimes}\, C_n \widehat{\otimes}\, C_n$, $R_n \widehat{\otimes}\, R_n \widehat{\otimes}\, R_n$,
$C_n \widehat{\otimes}\, C_n \widehat{\otimes}\, R_n$ and $R_n \widehat{\otimes}\, R_n \widehat{\otimes}\, C_n$.
The first two cases are clear since column and row Hilbert spaces are homogeneous,
i.e. every bounded maps are completely bounded with the same cb-norm.

For $P : C_n \widehat{\otimes}\, C_n \widehat{\otimes}\, R_n \rightarrow C_n \widehat{\otimes}\, C_n \widehat{\otimes}\, R_n$
we consider the factorization
	$$P : C_n \widehat{\otimes}\, C_n \widehat{\otimes}\, R_n \stackrel{Q\otimes I_{R_n}}{\longrightarrow}
	C_n \widehat{\otimes}\, C_n \widehat{\otimes}\, R_n \stackrel{I_{C_n}\otimes Q}{\longrightarrow}
	C_n \widehat{\otimes}\, C_n \widehat{\otimes}\, R_n,$$ where $$Q : \ell^n_2 \otimes \ell^n_2 \rightarrow \ell^n_2 \otimes \ell^n_2,
	\; e_i \otimes e_j \mapsto \delta_{i,j}e_i \otimes e_i.$$
Since $Q$ is completely contractive as mappings on $C_n \widehat{\otimes}\, R_n$
and $C_n \widehat{\otimes}\, R_n$ we get the desired conclusion. The last case is obtained similarly.

By looking at the coefficient of $e_i \otimes e_i \otimes \delta_i$ we observe that
	$$\sum^8_{l=1}h_l^{(i)} = {\bf 1}$$
for all $1\leq i\leq n$, where $h_l^{(i)} = h_l^{(i,i,i)}$. If we set
	$$\rho = \norm{{\bf 1} \otimes \sum^n_{i=1}e_i \otimes e_i \otimes \delta_i}_{\K_{\Pi^o_1(OH_n, S^n_p)}},$$
then we have
	\begin{align*}
	\sum^n_{i=1}\widetilde{\Psi}\Big(\frac{1}{8\rho}\Big) &\leq \sum^n_{i=1}\Psi\Big(\frac{1}{8\rho}\Big)\\
	& \leq \sum^n_{i=1}\Big( \frac{1}{64\rho^2} \norm{h^{(i)}_1}^2_{L_2(\mu_1)} + \frac{1}{64\rho^2} \norm{h^{(i)}_2}^2_{L_2(\mu_2)}\\
	& \;\;\;\; + \frac{1}{8\rho} \norm{h^{(i)}_3}_{L_2(\mu_{3,1})\otimes_{\pi}L_2(\mu_{3,2})} + \cdots
	+ \frac{1}{8\rho} \norm{h^{(i)}_8}_{L_2(\mu_{8,1})\otimes_{\pi}L_2(\mu_{8,2})}\Big)\\ & \leq 1 + \epsilon,
	\end{align*}
since we have $$\norm{(1\otimes P)h_l}^2_{F_l} = \sum^n_{i=1} \norm{h^{(i)}_l}^2_{L_2(\mu_l)}$$
for $l=1,2$ and $$\norm{(1\otimes P)h_l}_{F_l} = \sum^n_{i=1} \norm{h^{(i)}_l}_{L_2(\mu_{l,1})\otimes_{\pi}L_2(\mu_{l,2})}$$
for $3 \leq l \leq 8$.

Thus, by Lemma \ref{lem-recover-form-identity} we have
	\begin{align*}
	&\inf_{{\bf 1} = f_1 + \cdots + f_8} n^{\frac{1}{2}} \norm{f_1}_{L_2(\mu_1)} + n^{\frac{1}{2}} \norm{f_2}_{L_2(\mu_2)}
	+ n \norm{f_3}_{L_2(\mu_{3,1})\otimes_{\pi}L_2(\mu_{3,2})} \\ & \;\;\;\;\;\;\;\;\;\;\;\;\;\;\;\;
	+ \cdots + n \norm{f_8}_{L_2(\mu_{8,1})\otimes_{\pi}L_2(\mu_{8,2})}\\ & \sim \norm{\sum^n_{i=1}e_i}_{\widetilde{\Psi}}
	\leq 8\norm{{\bf 1} \otimes \sum^n_{i=1}e_i \otimes e_i \otimes \delta_i}_{\K_{\Pi^o_1(OH_n, S^n_p)}}.
	\end{align*}
The converse inequality is clear.

\vspace{1cm}

\begin{prop}\label{prop-orlicz-lp}
Let $1 < p <2$, $\frac{1}{p} + \frac{1}{p'}= 1$ and $\theta =
\frac{2}{p'}$. Then we have the inclusion $\ell_p \subseteq \ell_{\widetilde{\Psi}}$ with norm $\lesssim \theta^{-1}(1-\theta)^{-\frac{3}{2}}$.
\end{prop}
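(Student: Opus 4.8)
The plan is to read off the behaviour of $\widetilde{\Psi}$ near the origin from the identity computation already carried out, rather than by producing an explicit decomposition of ${\bf 1}$.

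First I would isolate the elementary half of the Orlicz comparison. If $\widetilde{\Psi}(x)\le M x^p$ for all $0<x\le1$ with $M\ge1$, then $\ell_p\subseteq\ell_{\widetilde{\Psi}}$ with inclusion norm $\le M^{1/p}$: for $(a_n)$ with $\|(a_n)\|_{\ell_p}\le1$ we have $|a_n|\le1$, so with $\rho=M^{1/p}$ the bound $\widetilde{\Psi}(|a_n|/\rho)\le M(|a_n|/\rho)^p=|a_n|^p$ gives $\sum_n\widetilde{\Psi}(|a_n|/\rho)\le\|(a_n)\|_{\ell_p}^p\le1$, i.e. $\|(a_n)\|_{\widetilde{\Psi}}\le\rho$. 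Since $\theta^{-1}=p'/2\ge1$, it therefore suffices to prove
$$M:=\sup_{0<x\le1}\frac{\widetilde{\Psi}(x)}{x^p}\lesssim\Big(\theta^{-1}(1-\theta)^{-\tfrac32}\Big)^{p}.$$

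The crux is that $M$ is controlled by the fundamental function $n\mapsto\|\sum_{i=1}^n e_i\|_{\widetilde{\Psi}}$, which we have essentially computed. Chaining Lemma \ref{lem-recover-form-identity}, Lemma \ref{lem-equi-identity} and Theorem \ref{thm-calculation-identity} gives
$$\Big\|\sum_{i=1}^n e_i\Big\|_{\widetilde{\Psi}}\sim\theta^{-1}(1-\theta)^{-\tfrac32}\,n^{\tfrac1p}.$$
Setting $y_n=\widetilde{\Psi}^{-1}(1/n)$, so that $\|\sum_{i=1}^n e_i\|_{\widetilde{\Psi}}=1/y_n$, this says precisely that $\widetilde{\Psi}(y_n)/y_n^{\,p}=\big(\|\sum_{i\le n}e_i\|_{\widetilde{\Psi}}/n^{1/p}\big)^{p}\lesssim(\theta^{-1}(1-\theta)^{-3/2})^p$ at the discrete scales $y_n$. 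To upgrade this to the full supremum defining $M$ I would invoke that $\widetilde{\Psi}$ satisfies the $\Delta_2$ condition: from the definition of $\Psi$, if ${\bf 1}=\sum_l f_l$ is near optimal for $\Psi(x)$ then $\Psi(2x)\le 4x^2(\|f_1\|_{L_2(\mu_1)}^2+\|f_2\|_{L_2(\mu_2)}^2)+2x\sum_{l\ge3}\|f_l\|\le4\Psi(x)$, so $\Psi$, and hence $\widetilde{\Psi}\sim\Psi$ by Lemma \ref{lem-equi-orlicz}, is $\Delta_2$. The $\Delta_2$ condition keeps the ratios $y_n/y_{n+1}$ bounded, so for $y_{n+1}\le x\le y_n$ monotonicity of $\widetilde{\Psi}$ gives $\widetilde{\Psi}(x)/x^p\le\widetilde{\Psi}(y_n)/y_{n+1}^{\,p}\lesssim\widetilde{\Psi}(y_n)/y_n^{\,p}$, whence $M\lesssim(\theta^{-1}(1-\theta)^{-3/2})^p$ and the inclusion norm is $\lesssim\theta^{-1}(1-\theta)^{-\tfrac32}$.

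The main thing to get right is the $\Delta_2$ bookkeeping in the last step, together with the discipline not to estimate $\Psi(x)$ head on. Reusing the flat region decomposition of Theorem \ref{thm-calculation-identity} for a single coordinate, with the cutoff optimized over $n$, does give an explicit upper bound for $\Psi(x)$, but its exponent of $(1-\theta)^{-1}$, namely $1+p$, exceeds the required $\tfrac{3p}{2}$ near $\theta=1$: that decomposition is adapted to many equal coefficients and is wasteful for one coefficient. Passing through the fundamental function and the regularity of $\widetilde{\Psi}$ is exactly what restores the sharp constant, so no new integral estimate is needed beyond Theorem \ref{thm-calculation-identity}.
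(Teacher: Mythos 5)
Your argument is correct and is essentially the paper's: both reduce the inclusion $\ell_p \subseteq \ell_{\widetilde{\Psi}}$ to the single estimate $\norm{\sum_{i=1}^n e_i}_{\widetilde{\Psi}} \lesssim \theta^{-1}(1-\theta)^{-\frac{3}{2}} n^{\frac{1}{p}}$, which is exactly what Theorem \ref{thm-calculation-identity} (chained through Lemmas \ref{lem-equi-identity} and \ref{lem-recover-form-identity}) supplies. The only difference is that you carry out the reduction by hand --- interpolating $\widetilde{\Psi}(x)\lesssim M x^p$ between the scales $y_n=\widetilde{\Psi}^{-1}(1/n)$, where convexity of $\widetilde{\Psi}$ already gives $y_n/y_{n+1}\leq (n+1)/n\leq 2$ so the $\Delta_2$ detour is not needed (and the range $y_1<x\leq 1$ is absorbed by the $n=1$ case) --- whereas the paper simply cites Proposition 4.a.5 of \cite{LTz} for this standard Orlicz sequence space criterion.
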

\begin{proof}
Note that $\ell_p$ and $\ell_{\widetilde{\Psi}}$ are both Orlicz sequence spaces. Thus, by Proposition 4.a.5. in \cite{LTz} it is enough to check that
if there is a constant $C>0$ such that $$\norm{\sum^n_{i=1}e_i}_{\widetilde{\Psi}}
\leq C \norm{\sum^n_{i=1}e_i}_{\ell_p} = C \theta^{-1}(1-\theta)^{-\frac{3}{2}} n^{\frac{1}{p}}$$ for any $n\in \n$, which is assured by Theorem \ref{thm-calculation-identity}.
\end{proof}

Finally we prove our main result.
\begin{thm}
Let $1< p < 2$ and $\frac{1}{p} + \frac{1}{p'} = 1$. Then, for any Hilbert space $H$ we have 
	$$CB(B(H), OH) \subseteq \Pi_{p', cb}(B(H), OH)$$ with the norm $\lesssim \big( \frac{p'}{p'-2} \big)^{\frac{1}{2}}$.
Equivalently, we have 
	$$\pi^o_1(T_x : OH \rightarrow \ell_p) \lesssim \Big( \frac{p'}{p'-2} \Big)^{\frac{1}{2}} \norm{x}_{\ell_p(OH)}$$
for all $x \in \ell_p(OH)$ and $T_x : OH \rightarrow \ell_{p}$, the linear map naturally associated to $x$.
\end{thm}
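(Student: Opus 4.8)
The plan is to read the theorem off the dual reformulation of Theorem~\ref{thm-dualprob}, feeding into it the concrete realization of $\pi^o_1$ and the Orlicz-space identification built up in this section, with the one subtlety being a careful bookkeeping of constants so that the prefactors collapse to the advertised $\big(\frac{p'}{p'-2}\big)^{\frac{1}{2}}=(1-\theta)^{-\frac{1}{2}}$, where $\theta=\frac{2}{p'}$.

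First I would apply Theorem~\ref{thm-dualprob}: the inclusion $CB(B(H),OH)\subseteq\Pi_{p',cb}(B(H),OH)$ is equivalent to the completely $1$-summing estimate $\pi^o_1(T_x:OH\to\ell_p)\leq C\,\|x\|_{\ell_p(OH)}$, so it is enough to prove the latter with $C\lesssim(1-\theta)^{-\frac{1}{2}}$. By the same density reduction used there I may restrict to $x\in\ell^n_p(OH_n)$ with $n$ finite, viewing $T_x:OH_n\to\ell^n_p$ through the diagonal of $S^n_p$.

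Next I would make $\pi^o_1$ concrete. Corollary~\ref{cor-SptimesOH-Embedding} gives $\pi^o_1(T_x)\sim\theta(1-\theta)\,\|{\bf 1}\otimes x\|_{\K_{\Pi^o_1(OH_n,S^n_p)}}$; passing to the diagonal (the completely contractive projection $P$ from the proof of Lemma~\ref{lem-equi-identity}) turns the right-hand norm into the $8$-term scalar function-space infimum of Lemma~\ref{lem-equi-identity}. The vector-valued Lemma~\ref{lem-recover-form} then identifies this infimum, taken along the $\ell_p$-index, with the Orlicz sequence norm $\|\cdot\|_{\widetilde\Psi}$, whereas $\|x\|_{\ell_p(OH)}$ corresponds to the $\ell_p$-norm of the associated sequence. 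Consequently the operator estimate is equivalent to the \emph{inclusion} $\ell_p\subseteq\ell_{\widetilde\Psi}$, and the surviving $\theta(1-\theta)$ prefactor is exactly what converts the inclusion constant into the final one. Now Proposition~\ref{prop-orlicz-lp} supplies $\ell_p\subseteq\ell_{\widetilde\Psi}$ with norm $\lesssim\theta^{-1}(1-\theta)^{-\frac{3}{2}}$, so that
\[
\pi^o_1(T_x)\;\lesssim\;\theta(1-\theta)\cdot\theta^{-1}(1-\theta)^{-\frac{3}{2}}\,\|x\|_{\ell_p(OH)}\;=\;(1-\theta)^{-\frac{1}{2}}\,\|x\|_{\ell_p(OH)},
\]
which equals $\big(\frac{p'}{p'-2}\big)^{\frac{1}{2}}\|x\|_{\ell_p(OH)}$; Theorem~\ref{thm-dualprob} then translates this back into the claimed cb-summing inclusion.

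The genuine difficulty is not this final assembly but the two results it consumes. The first is the exact identity computation of Theorem~\ref{thm-calculation-identity}, namely $\|{\bf 1}\otimes\sum_i e_i\otimes e_i\otimes\delta_i\|_{\K}\sim\theta^{-1}(1-\theta)^{-\frac{3}{2}}n^{\frac{1}{p}}$; this is where the delicate partition of $\mathbb{R}^4_+$ into the eight regions and their subregions (Tables~1 and~2) is unavoidable, and where one must check that the same regions simultaneously deliver the lower bound and, via the separation-of-variables observation, the matching upper bound. The second is the reduction of the operator inequality for a \emph{general} matrix $x$ to a single scalar Orlicz sequence space: this rests on the averaging trick of Lemma~\ref{lem-recover-form-identity} and on the complete contractivity of the diagonal projections on the column/row triple tensor products, and it is precisely what allows the one-parameter Orlicz criterion (Proposition~4.a.5 of \cite{LTz}), tested only on $\sum_i e_i$, to govern every $x$.
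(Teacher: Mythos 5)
Your overall architecture coincides with the paper's: reduce via Theorem \ref{thm-dualprob} to the completely $1$-summing estimate, realize $\pi^o_1(T_x)$ concretely through Corollary \ref{cor-SptimesOH-Embedding}, pass to the Orlicz sequence space $\ell_{\widetilde{\Psi}}$, and close with Proposition \ref{prop-orlicz-lp} so that the constants collapse to $\theta(1-\theta)\cdot\theta^{-1}(1-\theta)^{-\frac{3}{2}}=(1-\theta)^{-\frac{1}{2}}=\big(\tfrac{p'}{p'-2}\big)^{\frac{1}{2}}$. That bookkeeping, and your identification of Theorem \ref{thm-calculation-identity} as the real computational core, are exactly right.

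The gap is in the mechanism you propose for handling a \emph{general} $x=(a_{ij})\in\ell^n_p(OH_n)$. You reduce to the scalar Orlicz criterion by ``passing to the diagonal'' via the projection $P$ of Lemma \ref{lem-equi-identity} and by the averaging trick of Lemma \ref{lem-recover-form-identity}. Both devices are tied specifically to the identity element $\sum_i e_i\otimes e_i\otimes\delta_i$: the projection $P$ sends $e_i\otimes e_j\otimes e_k$ to $\delta_{i,j,k}\,e_i\otimes e_i\otimes e_k$ and so annihilates every coefficient of ${\bf 1}\otimes\sum_{i,j}a_{ij}\,e_j\otimes e_{ii}$ with $i\neq j$, while averaging over the symmetric group washes out the dependence on $x$ altogether; neither produces the $8$-term scalar infimum for a general matrix, and the operator estimate is \emph{implied by}, not equivalent to, the inclusion $\ell_p\subseteq\ell_{\widetilde{\Psi}}$. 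What the paper actually does for general $x$ is different: it forms the row norms $b_i=\big(\sum_j\abs{a_{ij}}^2\big)^{\frac{1}{2}}$, chooses a near-optimal decomposition ${\bf 1}=g^i_1+\cdots+g^i_8$ witnessing $\norm{(b_i)}_{\ell_{\widetilde{\Psi}}}$ (this is where the vector-valued Lemma \ref{lem-recover-form} enters), sets $f^{ij}_l=g^i_l\otimes a_{ij}$, and then --- the step your write-up omits --- dominates each $\norm{f_l}_{F_l}$ by the corresponding $L_2(\mu_{l,1})\otimes_{\pi}L_2(\mu_{l,2})\otimes_{\pi}\ell^n_1(\ell^n_2)$ or $L_2(\mu_l;\ell^n_2(\ell^n_2))$ quantity using the contractive formal identities $L_2(\mu)\otimes_{\pi}X\to L_2(\mu;X)$ and $\ell^n_1(\ell^n_2)\to\ell^n_2\otimes_{\pi}\ell^n_2$. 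Without that comparison the concrete $\K_{\Pi^o_1(OH_n,S^n_p)}$-norm of ${\bf 1}\otimes a$ is not controlled by the Orlicz quantity, and the argument does not close for non-diagonal $a$.
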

\begin{proof}
We focus on the $n$-dimensional case as before. Let
	$$a = \sum^n_{i,j=1}a_{ij} e_j \otimes e_{ii} \in OH_n\otimes S^n_p.$$
Suppose
	$$\norm{(a_{ij})}_{\ell_{\widetilde{\Psi}}(\ell^n_2)} 	
	=\norm{\Big(\Big[\sum^n_{j=1}\abs{a_{ij}}^2\Big]^{\frac{1}{2}}\Big)^n_{i=1}}_{\ell_{\widetilde{\Psi}}} < 1.$$
Then there are $g_1 = (g^i_1)^n_{i=1}, \cdots, g_8 = (g^i_8)^n_{i=8}$ with
	$${\bf 1} = g^i_1 + \cdots + g^i_8$$
such that
	\begin{align*}
	4 > & \sum^n_{i=1}\Big[ \sum^n_{j=1}\abs{a_{ij}}^2\norm{g^i_1}^2_{L_2(\mu_1)} + \sum^n_{j=1}\abs{a_{ij}}^2\norm{g^i_2}^2_{L_2(\mu_2)}\\
	& \;\;\;\;\;\;\;\;+ \Big(\sum^n_{j=1}\abs{a_{ij}}^2\Big)^{\frac{1}{2}}\norm{g^i_3}_{L_2(\mu_{3,1})\otimes_{\pi}L_2(\mu_{3,2})}\\
	& \;\;\;\;\;\;\;\;+ \cdots + \Big(\sum^n_{j=1}\abs{a_{ij}}^2\Big)^{\frac{1}{2}}\norm{g^i_8}_{L_2(\mu_{8,1})\otimes_{\pi}L_2(\mu_{8,2})}\Big].
	\end{align*}
If we set $f^{ij}_l = g^i_l \otimes a_{ij}$ for $1\leq l \leq 8$, then we have
	\begin{align*}
	4 & > \sum^n_{i=1}\Big[ \norm{(f^{ij}_1)^n_{j=1}}^2_{L_2(\mu_1; \ell^n_2)} + \norm{(f^{ij}_2)^n_{j=1}}^2_{L_2(\mu_2; \ell^n_2)}
	\\& \;\;\;\;+ \norm{(f^{ij}_3)^n_{j=1}}_{L_2(\mu_{3,1})\otimes_{\pi}L_2(\mu_{3,2})\otimes_{\pi}\ell^n_2} + \cdots  +
	\norm{(f^{ij}_8)^n_{j=1}}_{L_2(\mu_{8,1})\otimes_{\pi}L_2(\mu_{8,2})\otimes_{\pi}\ell^n_2}\Big] \\
	& = \norm{(f^{ij}_1)^n_{i,j=1}}^2_{L_2(\mu_1; \ell^n_2(\ell^n_2))} + \norm{(f^{ij}_2)^n_{i,j=1}}^2_{L_2(\mu_2; \ell^n_2(\ell^n_2))}
	\\&\;\;\;\; + \norm{(f^{ij}_3)^n_{i,j=1}}_{L_2(\mu_{3,1})\otimes_{\pi}L_2(\mu_{3,2})\otimes_{\pi}\ell^n_1(\ell^n_2)}\\
	& \;\;\;\; + \cdots  + \norm{(f^{ij}_8)^n_{i,j=1}}_{L_2(\mu_{8,1})\otimes_{\pi}L_2(\mu_{8,2})\otimes_{\pi}\ell^n_1(\ell^n_2)}.
	\end{align*}

Now we have by Corollary \ref{cor-SptimesOH-Embedding} that
	$$\pi^o_1(T_a) \sim \theta(1-\theta) \norm{{\bf 1}\otimes a}_{\K_{\Pi^o_1(OH_n, \ell^n_p)}}
	\leq \theta(1-\theta) \inf \sum^8_{l=1} \norm{f_l}_{F_l},$$
where $\theta = \frac{2}{p'}$ and the infimum above runs over all possible
	$${\bf 1}\otimes a = f_1 + \cdots + f_8.$$
Note that the formal identities
	$$L_2(\mu)\otimes_{\pi} X \rightarrow L_2(\mu ; X) \;\text{and}\; \ell^n_1(\ell^n_2) 
	= \ell^n_1 \otimes_{\pi} \ell^n_2 \rightarrow \ell^n_2\otimes_{\pi}\ell^n_2$$
are contractions for any Banach space $X$. Then, we have
	\begin{align*}
	\sum^8_{l=1} \norm{f_l}_{F_l} & \leq \norm{f_1}_{L_2(\mu_1; \ell^n_2(\ell^n_2))} + \norm{f_2}_{L_2(\mu_2; \ell^n_2(\ell^n_2))}
	\\ & \;\;\;\;+  \norm{f_3}_{L_2(\mu_{3,1})\otimes_{\pi}L_2(\mu_{3,2})\otimes_{\pi}\ell^n_2\otimes_{\pi}\ell^n_2}
	+ \norm{f_4}_{L_2(\mu_{4,1})\otimes_{\pi}L_2(\mu_{4,2})\otimes_{\pi}\ell^n_2\otimes_{\pi}\ell^n_2}\\
	& \;\;\;\; + \norm{f_5}_{L_2(\mu_{5,1})\otimes_{\pi}L_2(\mu_{5,2})\otimes_{\pi}\ell^n_1(\ell^n_2)} + \cdots
	+ \norm{f_8}_{L_2(\mu_{8,1})\otimes_{\pi}L_2(\mu_{8,2})\otimes_{\pi}\ell^n_1(\ell^n_2)}\\
	&\leq \norm{f_1}_{L_2(\mu_1; \ell^n_2(\ell^n_2))} + \norm{f_2}_{L_2(\mu_2; \ell^n_2(\ell^n_2))}
	\\ & \;\;\;\;+  \norm{f_3}_{L_2(\mu_{3,1})\otimes_{\pi}L_2(\mu_{3,2})\otimes_{\pi}\ell^n_1(\ell^n_2)}
	+ \norm{f_4}_{L_2(\mu_{4,1})\otimes_{\pi}L_2(\mu_{4,2})\otimes_{\pi}\ell^n_1(\ell^n_2)}\\
	& \;\;\;\; + \norm{f_5}_{L_2(\mu_{5,1})\otimes_{\pi}L_2(\mu_{5,2})\otimes_{\pi}\ell^n_1(\ell^n_2)} + \cdots
	+ \norm{f_8}_{L_2(\mu_{8,1})\otimes_{\pi}L_2(\mu_{8,2})\otimes_{\pi}\ell^n_1(\ell^n_2)}.
	\end{align*}
If we set $f_l = (f^{ij}_l)^n_{i,j=1}$, then we have
	$$\sum^8_{l=1} \norm{f_l}_{F_l} < 28.$$
Thus, we have
	$$\pi^o_1(T_a) \lesssim \theta(1-\theta)\norm{(a_{ij})^n_{i,j=1}}_{\ell_{\widetilde{\Psi}}(\ell^n_2)}.$$
Finally, by Proposition \ref{prop-orlicz-lp} we have
	$$\pi^o_1(T_a) \lesssim (1-\theta)^{-\frac{1}{2}}\norm{\sum^n_{i,j=1}a_{ij}e_{ii} \otimes e_j}_{\ell_p(\ell^n_2)}.$$

\end{proof}

\begin{rem}{\rm
A similar argument as above can be used to prove $(3'')$ of Remark \ref{rem-dualprob}. Let's describe it briefly.
Let $1< p <2$ and $\theta = \frac{1}{p}$.
First, we consider the embedding of
$$ C_p \hookrightarrow L^c_2(t^{-\theta}; \ell_2) +_2 L^r_2(t^{1-\theta}; \ell_2),\; e_i \mapsto
{\bf 1}\otimes e_i.$$ By a similar argument as in section \ref{subsec-L2L1}
it is well known that $$L^c_2(t^{-\theta}; \ell_2) +_2 L^r_2(t^{1-\theta}; \ell_2)$$ is completely complemented
in the predual of a von Nemann algebra with $QWEP$. For $OH$ we use the same embedding as before. Then, we have
$$\pi^o_1(T_x : OH \rightarrow C_p) \sim \norm{{\bf 1}\otimes x}_{\K_{\Pi^o_1(OH, C_p)}},$$ where
\begin{align*}
\begin{split}
\K_{\Pi^o_1(OH, C_p)} & = (L^c_2(t^{-\frac{1}{2}}; \ell_2) + L^r_2(t^{\frac{1}{2}}; \ell_2))
\widehat{\otimes} (L^c_2(s^{-\theta}; \ell_2) + L^r_2(s^{1-\theta}; \ell_2)) \\
& = L^c_2(t^{-\frac{1}{2}}s^{-\theta}; \ell_2\otimes \ell_2) + L^r_2(t^{\frac{1}{2}}s^{1-\theta}; \ell_2\otimes \ell_2)\\
& \;\;\;\; + L^c_2(t^{-\frac{1}{2}}; \ell_2)\widehat{\otimes} L^r_2(s^{1-\theta}; \ell_2) + L^r_2(t^{\frac{1}{2}}; \ell_2)
\widehat{\otimes} L^r_2(s^{-\theta}; \ell_2).
\end{split}
\end{align*}
When $x = \sum^n_{i=1}e_i\otimes e_i$ we can calculate $$\norm{{\bf 1}\otimes x}_{\K_{\Pi^o_1(OH, C_p)}} \sim (1-\theta)^{-\frac{1}{2}}(2\theta-1)^{-\frac{1}{2}}n^{\frac{p+2}{4p}}$$ as before.
(We divide $\mathbb{R}^2_+$ into four regions according to the minimum, then we get the lower bound
and the upper bound is the same since we have separation of variables for all problematic terms.)

Since we have $S_p(OH) = C_p \otimes_h OH \otimes_h R_p$ under the mapping
$$e_{ij} \otimes e_k \mapsto e_{i1}\otimes e_k \otimes e_{1j}$$
we are only to compare $\norm{{\bf 1}\otimes x}_{\K_{\Pi^o_1(OH, C_p)}}$ and $\norm{x}_{C_p \otimes_h OH}$.
Note that for any unitaries $U$ and $V$ we have
$$\norm{{\bf 1}\otimes UxV}_{\K_{\Pi^o_1(OH, C_p)}} = \norm{{\bf 1}\otimes x}_{\K_{\Pi^o_1(OH, C_p)}}$$ and
$$\norm{UxV}_{C_p \otimes_h OH} = \norm{x}_{C_p \otimes_h OH},$$ since
\begin{align*}
\begin{split}
C_p \otimes_h OH & = [C, R]_{\frac{1}{p}} \otimes_h OH = [C\otimes_h OH, R\otimes_h OH]_{\frac{1}{p}}\\
& = \Big[[C\otimes_h C, C\otimes_h R]_{\frac{1}{2}}, [R\otimes_h C, R\otimes_h R]_{\frac{1}{2}}\Big]_{\frac{1}{p}}\cong S_r
\end{split}
\end{align*}
isometrically for $r = \frac{4p}{p+2}$.

Thus it is enough to consider the case when $x$ is a diagonal matrix. Since the closed linear span of ${\bf 1}\otimes x$ and
$x$ for diagonal $x$ in $\K_{\Pi^o_1(OH, C_p)}$ and $C_p \otimes_h OH$, respectively, are equivalent to Orlicz sequence spaces
we are only to compare norms $\norm{{\bf 1}\otimes x}_{\K_{\Pi^o_1(OH, C_p)}}$ and $\norm{x}_{C_p \otimes_h OH}$ for
$x = \sum^n_{i=1}e_i\otimes e_i$, which is already done above.
}
\end{rem}

\bibliographystyle{amsplain}

\begin{thebibliography}{1}


\bibitem{BL76}
J. Bergh and J. L\"{o}fstr\"{o}m, Interpolation spaces.
Springer-Verlag, Berlin, 1976.

\bibitem{EJR00}
E. G. Effros, M. Junge and Z. J. Ruan, Integral mappings and the principle of local reflexivity for noncommutative $L\sp 1$-spaces.
Ann. of Math. (2) \textbf{151} (2000), no. 1, 59-92.

\bibitem{ER00}
E. G. Effros and Z. J. Ruan, Operator spaces. London MathematicalSociety Monographs. New Series, 23. The Clarendon Press, Oxford
University Press, New York, 2000.

\bibitem{HM}
U. Haagerup and M. Musat, The Effros-Ruan conjecture for bilinear forms on $C^*$-algebras.

\bibitem{J-Hab}
M. Junge, Factorization theory for Spaces of Operators. Habilitationsschrift, Kiel 1996.

\bibitem{J02}
M. Junge, Doob's inequality for non-commutative martingales. J. reine angew. Math. \textbf{549} (2002), 149-190.

\bibitem{J05}
M. Junge, Embedding of the operator space OH and the logarithmic `little Grothendieck inequality'. Invent. Math. \textbf{161}
(2005), no. 2, 225-286.

\bibitem{J-Araki}
M. Junge, Operator spaces and Araki-Woods factors: a quantum probabilistic approach. IMRP Int. Math. Res. Pap. 2006.

\bibitem{JP05}
M. Junge and J. Parcet, The norm of sums of independent non-commutative random variables in $L_p(\ell_1)$ J. Funct. Anal. \textbf{221} (2005), 366-406. 

\bibitem{JP-LpLq}
M. Junge and J. Parcet, Operator space embedding of $L_q$ into $L_p$. preprint 2006, arXiv : math/0606596.

\bibitem{L-typecotype}
H. H. Lee, Type and cotype of operator spaces. preprint 2006, arXiv:math/0502302.

\bibitem{L-weak}
H. H. Lee, Weak type $(2,H)$ and weak cotype $(2,H)$ of operator spaces. preprint 2006, arXiv:math/0502337.

\bibitem{LTz}
J. Lindenstrauss and L. Tzafriri, Classical Banach spaces. II. Function spaces. Results in Mathematics and Related Areas, 97.
Springer-Verlag, Berlin-New York, 1979.

\bibitem{Mau}
B. Maurey, Theoremes de factorisation pour les operateurs lineaires a valeurs dans les espaces $L\sp{p}$. (French) With an English summary.
Asterisque, No. 11. Societe Mathematique de France, Paris, 1974.

\bibitem{Par03}
J. Parcet, $B$-convex operator spaces. Proc. Edinb. Math. Soc. (2) \textbf{46} (2003), no. 3, 649-668.

\bibitem{P96}
G. Pisier, The operator Hilbert space ${\rm OH}$, complex interpolation and tensor norms. Mem. Amer. Math. Soc. 122 (1996), no. 585.

\bibitem{P98}
G. Pisier, Non-commutative vector valued $L_p$-spaces and completely $p$-summing maps. Ast\'{e}risque(Soc. Math. France)
\textbf{247} (1998), 1-111.

\bibitem{P03}
G. Pisier, Introduction to operator space theory. London Mathematical Society Lecture Note Series, 294. Cambridge
University Press, Cambridge, 2003.

\bibitem{P04}
G. Pisier, Completely bounded maps into certain Hilbertian operator spaces. Int. Math. Res. Not. 2004, no. 74, 3983-4018.

\bibitem{PS02}
G. Pisier and D. Shlyakhtenko, Grothendieck's theorem for operator spaces. Invent. Math. \textbf{150} (2002), no. 1, 185-217.

\bibitem{Ray02}
Y. Raynaud, On ultrapowers of non commutative $L_p$ spaces. J. Operator Theory, 2002.

\bibitem{S97}
D. Shlyakhtenko, Free quasi-free states. Pacific J. Math. \textbf{177} (1997) no. 2, 329-368.

\bibitem{Ta79}
M. Takesaki, Theory of operator algebras. I. Springer-Verlag, New York-Heidelberg, 1979.

\bibitem{Ta03}
M. Takesaki, Theory of operator algebras. II. Encyclopaedia of Mathematical Sciences, 125. Operator Algebras and Non-commutative Geometry, 6. Springer-Verlag, Berlin, 2003.

\bibitem{X96}
Q. Xu, Interpolation of operator spaces. J. Funct. Anal. \textbf{139} (1996), no. 2, 500-539.

\bibitem{X-Real-Embedding}
Q. Xu, Real interpolation approach to Junge's work on embedding of $OH$ and the little Grothendieck inequality. preprint, 2004.

\bibitem{X-Rp-Embedding}
Q. Xu, Embedding of $C_q$ and $R_q$ into noncommuatative $L_p$-spaces, $1\leq p < q \leq 2$. Math. Ann. \textbf{335} (2006), no. 1, 109-131.

\bibitem{Y-p-summing}
K. L. Yew, Completely $p$-summing maps on the operator Hilbert space $OH$. 
preprint, 2005.

\end{thebibliography}
\providecommand{\bysame}{\leavevmode\hbox
to3em{\hrulefill}\thinspace}

\end{document}